\def\checkmark{\tikz\fill[scale=0.4](0,.35) -- (.25,0) -- (1,.7) -- (.25,.15) -- cycle;}
\definecolor{orange}{rgb}{1,0.5,0}
\newtheorem{theorem}{{\bf Theorem}}
\newtheorem{remark}{{\bf Remark}}
\newtheorem{definition}[theorem]{{\bf Definition}}
\newtheorem{proposition}[theorem]{{\bf Proposition}}
\newtheorem{lemma}[theorem]{{\bf Lemma}}
\newtheorem{example}{{\bf Example}}
\def\bfv{\boldsymbol{v}}
\def\bfw{\boldsymbol{w}}
\def\res{{\rm res}}
\def\bsz{{\mathcal L}}
\def\OO{{\mathcal O}}
\def\sO{{\widetilde{\mathcal O}}}
\def\sOB{{\widetilde{\mathcal O}}_B}
\def\ZZ{\mathbb{Z}}
\begin{document}

\begin{frontmatter}
\title{Computing the topology of a plane or space hyperelliptic curve.}

\author[a]{Juan Gerardo Alc\'azar\fnref{proy,proy2}}
\ead{juange.alcazar@uah.es}
\author[a]{Jorge Caravantes\fnref{proy}}
\ead{jorge.caravantes@uah.e}
\author[b]{ Gema M. Diaz-Toca\fnref{proy}}
\ead{gemadiaz@um.es}
\author[c]{Elias Tsigaridas\fnref{proy4}}
\ead{elias.tsigaridas@inria.fr}

\fntext[proy]{Supported by the Spanish Ministerio de Ciencia, Innovaci\'on y Universidades and by the European Regional Development Fund (ERDF), under the project MTM2017-88796-P.}

\fntext[proy2]{Member of the Research Group {\sc asynacs} (Ref. {\sc ccee2011/r34}) }

\fntext[proy4]{Partially supported by a Giner de los R\'{\i}os Grant of the Universidad de Alcal\'a,  ANR JCJC GALOP (ANR-17-CE40-0009) and the PGMO grant ALMA.}

\address[a]{Departamento de F\'{\i}sica y Matem\'aticas, Universidad de Alcal\'a,
E-28871 Madrid, Spain}
\address[b]{Departamento de Ingenier\'ia y Tecnolog\'ia de Computadores, Universidad de Murcia, E-30100 Murcia, Spain} 
\address[c]{Inria Paris-Rocquencourt, Paris, France}
\begin{abstract}
We present algorithms to compute the topology of 2D and 3D hyperelliptic curves. The algorithms are based on the fact that 2D and 3D hyperelliptic curves can be seen as the image of a planar curve (the Weierstrass form of the curve), whose topology is easy to compute, under a birational mapping of the plane or the space. We report on a {\tt Maple} implementation of these algorithms, and present several examples. Complexity and certification issues are also discussed.  
\end{abstract}
\end{frontmatter}


\section{Introduction}\label{section-introduction}

Rational curves are widely used in Computer Aided Geometric Design. \emph{Hyperelliptic curves} are not rational, but they are \emph{birationally equivalent} to planar algebraic curves quadratic in one variable, the corresponding \emph{Weierstrass forms}, where birationally equivalent means that there exists a rational mapping between the curve and its Weierstrass form with an also rational inverse. Since Weierstrass forms are quadratic in one variable, hyperelliptic curves are parametrizable by square-roots. Thus, hyperelliptic curves are one of the simplest examples of non-rational families of curves. Furthermore, this type of curves appears frequently in Computer Aided Geometric Design. A good account of the occurrence of hyperelliptic curves in this field is given in \cite{Bizzarri}, where the problem of approximating hyperelliptic curves by means of rational parametrizations is addressed. As a brief summary of \cite{Bizzarri}, non-rational offsets of rational planar curves and some bisector curves (line$/$rational curve, or circle$/$rational curve) are planar hyperelliptic curves. Contour curves of canal surfaces, intersections of two quadrics or intersections of a quadric and a ruled surface are examples of hyperelliptic curves in 3-space. With more generality, every planar or space algebraic curve ${\mathcal C}$ admitting a square-root parametrization (see also \cite{SSV17}) is hyperelliptic. 

In this paper we address the problem of computing the topology of a
hyperelliptic curve ${\mathcal C}$. Efficient and fast algorithms to compute the Weierstrass form ${\mathcal G}$ of ${\mathcal C}$, as well as a birational mapping ${\bf x}:{\mathcal G}\dashrightarrow {\mathcal C}$ can be found in many computer algebra systems, e.g. Sage, Maple or Magma. Here we will assume that ${\bf x},{\mathcal G}$ are already known, and in fact that ${\mathcal C}$ is defined by means of the pair ${\bf x},{\mathcal G}$, so that ${\mathcal C}$ is seen as the image of the planar algebraic curve ${\mathcal G}$ under the mapping defined by ${\bf x}$. Since ${\mathcal G}$ is a simple curve, quadratic in one variable, and therefore the union of the graphs of two univariate functions, the topology of ${\mathcal G}$ is very easy to capture. Thus, our strategy to compute the topology of ${\mathcal C}$ is to study how the birational mapping modifies the topology of the Weierstrass form. Hence, we might say that the Weierstrass form ``guides" us to build the topology of ${\mathcal C}$. In more detail, we describe the topology of ${\mathcal G}$ by means of a \emph{topological graph} $G_{\mathcal G}$, i.e. a graph isotopic to the curve. Then the topology of ${\mathcal C}$ is described by means of another graph $G_{\mathcal C}$ whose vertices are the images of the vertices of 
$G_{\mathcal G}$ under ${\bf x}$, and whose edges correspond to the branches of ${\bf x}({\mathcal G})$, which are in one-to-one correspondence with the edges of $G_{\mathcal G}$. If ${\bf x}$ becomes infinite at a vertex of $G_{\mathcal G}$, the image of such a vertex corresponds to a branch at infinity of ${\mathcal C}$.

Additionally, the pair ${\bf x},{\mathcal G}$ may come for free, or almost for free, in certain applications; see for instance the introductory example of an intersection curve at the beginning of Section 2. If the pair ${\bf x},{\mathcal G}$ is known, in order to determine the topology
of ${\mathcal C}$ one might compute an implicit representation of
${\mathcal C}$ using elimination methods. This yields one implicit
equation in the plane case, and at least two implicit equations in the space case. In both cases, plane and space, after computing the implicit equation(s)
one might use existing algorithms to find the topology of the curve:
see for instance \cite{Berb, CLPPRT09, EKW07, GVN}, among many others,
for the planar case, or \cite{AS05, Lazard13, Daouda, Kahoui} for the
space case. However, such an implicit representation typically has a
high degree and big coefficients, which makes it difficult to use.
Moreover, many algorithms have additional
  assumptions, for example generic position, or complete intersection in the space case,
  that are computationally expensive to fulfill.
As a consequence, if the pair ${\bf x},{\mathcal G}$ is known, it
is useful to have an alternative method for computing the topology of
${\mathcal C}$ that avoids using an implicit representation. 

On the other hand, if ${\mathcal C}$ is defined by means of an implicit representation the pair ${\bf x},{\mathcal G}$ can be computed using a computer algebra system. Thus, our algorithm is applicable to that case as well, and provides an alternative to existing algorithms for computing the topology of a plane or space curve. This is specially useful in the space case, since known algorithms to compute the topology of a space case are not so easy to use in practice, and have a high complexity (see Section 6.3).

It is worth comparing our paper with some other related papers. In \cite{ADT10} the topology of 2D and 3D rational curves is addressed. In \cite{ADT10} the curve is seen as the image of the real line under a planar or space birational mapping, so somehow the germ of the idea in this paper is already in \cite{ADT10}. In \cite{Caravantes}, a method to compute the topology of a (non-necessarily rational) offset curve of a rational planar curve is provided. The method exploits similar ideas to \cite{ADT10}, but focuses on offset curves, which have special properties. Finally, in \cite{Bizzarri} the problem of approximating a hyperelliptic curve by means of rational curves is considered. The Weierstrass form is also used in \cite{Bizzarri}, but the goal is different, and in particular the computation of the topology of the hyperelliptic curve is not addressed. 

Our method has been implemented in the computer algebra system {\tt Maple} 2017, and the implementation can be freely downloaded from \cite{gmdt}. In order to certify the topology we need to certify self-intersections, i.e., we need to certify whether or not the image of two points under the birational mapping giving rise to our curve, is the same. This requires to work with algebraic numbers, and is computationally difficult. We address this problem, and we provide a complexity analysis of the algorithm with and without the certification step. While the complexity bound that we get is not better than the known complexity for the implicit planar case \cite{Sagra15}, it is, however, definitely better compared to the implicit space case \cite{Diatta, Lazard13}. It is true, however, that in \cite{Diatta, Lazard13} the space curve is assumed to be given by an implicit representation. However, in our paper, even though the algorithm is applicable also to implicit curves after computing a Weierstrass form of the curve (which is efficient and fast), we assume a different representation of the curve, namely as the birational image of a Weierstrass curve.

The structure of this paper is the following. We motivate and present the problem in Section \ref{sec-prelim}, where some preliminary notions and ideas are given. The planar case is addressed in Section 3, and the space case is studied in Section 4. In Section 5 we report on the results of our experimentation, carried out in the computer algebra system {\tt Maple 2017}. In Section 6, we address the complexity of the algorithm, we consider certification issues, and we compare the complexity of our algorithm with the known complexities of algorithms using an implicit representation of the curve. Section 7 contains our conclusions. The proofs of some results in Section 3 are postponed to Appendix I, so as not to stop the flow of the paper.

\section{Motivation and presentation of the problem.} \label{sec-prelim}

Consider a \emph{biquadratic} patch $S$, commonly used in Computer Aided Geometric Design, parametrized by
\begin{equation}\label{patch}
{\bf x}(t,s)=(x(t,s),y(t,s),z(t,s))=\sum_{i=0}^2 \sum_{j=0}^2 {\bf c}_{ij} B_i(t)B_j(s),
\end{equation}
where $B_k(u)={2\choose k} u^k(1-u)^{2-k}$ for $k=0,1,2$, and ${\bf c}_{ij}\in {\Bbb R}^3$ for $i,j=0,1,2$. Assume that we want to describe the topology of the intersection curve ${\mathcal C}$ of $S$ with a general plane $\Pi$ of equation $Ax+By+Cz+D=0$, i.e. the topology of $S\cap \Pi$. In order to do this, substituting the components $x(t,s)$, $y(t,s)$, $z(t,s)$ of ${\bf x}(t,s)$ into the equation of $\Pi$ we get an algebraic condition $g(t,s)=0$; since the components of ${\bf x}(t,s)$ have bidegree $(2,2)$, one can see that 
\begin{equation}\label{ex-hyper}
g(t,s)=\Psi_1(t)s^2+\Psi_2(t)s+\Psi_3(t)=0,
\end{equation}
where the $\Psi_i(t)$, $i=1,2,3$, are polynomials in the variable $t$. Then the curve ${\mathcal C}=S\cap \Pi$ can be described as the closure of the image of the planar curve ${\mathcal G}$, defined by $g(t,s)=0$  in the $(t,s)$-plane, under the (rational) mapping ${\bf x}$, i.e. ${\mathcal C}=\overline{{\bf x}({\mathcal G})}$. Notice that $\mathcal{C}-{\bf x}(\mathcal{G})$ reduces to finitely many points corresponding to either the image of points of ${\mathcal G}$ at infinity, or limit points in ${\mathcal C}$ corresponding to base points of ${\bf x}$, lying in ${\mathcal G}$.

\vspace{0.3 cm}
The situation presented above is an example of the general problem treated in this paper. Given a planar curve ${\mathcal G}$, implicitly defined in the plane $(t,s)$ by a polynomial equation like Eq. \eqref{ex-hyper}, of degree 2 in the variable $s$, our goal is to compute the topology of the curve ${\mathcal C}=\overline{{\bf x}({\mathcal G})}$, where ${\bf x}:{\Bbb R}^2\to {\Bbb R}^n$, with $n=2$ or $n=3$, is \emph{birational} when restricted to ${\mathcal G}$; in particular, in that case the inverse mapping ${\bf x}|_{\mathcal G}^{-1}: {\mathcal C}\to {\mathcal G}$ exists and is rational. Writing
\[
{\bf x}=(x_1,x_2,\ldots,x_n),
\]
we will refer to the functions $x_i:{\Bbb R}^2\to {\Bbb R}$ as the \emph{components} of the mapping ${\bf x}$. Notice that if ${\mathcal C}$ is a rational curve, in which case the curve ${\mathcal G}$ must also be rational because of the birationality of the mapping ${\bf x}|_{\mathcal G}$, then the problem can be solved using already existing methods \cite{ADT10}. Thus, we will assume that ${\mathcal C}$, and therefore also ${\mathcal G}$, is not rational, in which case ${\mathcal C}$ is said to be a \emph{hyperelliptic curve}. 

\vspace{0.3 cm}
With some generality (see for instance \cite{Bizzarri}), we say that a curve ${\mathcal C}$ is \emph{hyperelliptic} if there exists a generically two-to-one map ${\mathcal C}\to {\Bbb R}$. Furthermore, such a curve (see for instance \cite{hyp}) is birationally equivalent to a planar curve 
\begin{equation}\label{weierstrass}
s^2-p(t)=0,
\end{equation} 
where $p(t)$ is a square-free polynomial of degree $2{\bf g}+1$ or $2{\bf g}+2$, where ${\bf g}$ is the \emph{genus} of ${\mathcal C}$. Recall (see for instance \cite{SWP}) that the genus ${\bf g}$ is a birational invariant that, in particular, characterizes rational curves: ${\bf g}=0$ corresponds to rational curves, while for non-rational curves ${\bf g}\geq 1$, ${\bf g}\in {\Bbb N}$. Additionally, whenever we work over a field of characteristic different from 2, as it is our case, one can always get a Weierstrass curve where the degree of $p(t)$ is $2{\bf g}+1$ (see for instance \cite{hyp}). Also, Eq. \eqref{weierstrass} is called the \emph{Weierstrass form} of ${\mathcal C}$. Notice (see p. 59 of \cite{Bizzarri}) that we can always transform the expression Eq. \eqref{ex-hyper} of our motivating example into an expression like Eq. \eqref{weierstrass} by considering a change of parameters
\[
t:=t,\mbox{ }s:=\frac{-B(t)+s}{2A(t)}.
\]
Furthermore, the reached expression corresponds to a hyperelliptic curve if and only if the polynomial $p(t)$ in the expression we obtain is square-free (see Lemma 2 of \cite{hyp}).

In this paper we will assume that the Weierstrass form has already been computed, and therefore that the curve ${\mathcal G}$ is described by means of Eq. \eqref{weierstrass}. Additionally, we will assume that the curve ${\mathcal G}$ is real, i.e. that it contains infinitely many real points; if ${\mathcal G}$ is not real, then because of the birationality of ${\bf x}|_{\mathcal G}$, ${\mathcal C}$ cannot be real either. Observe also that since $s^2-p(t)$ is an irreducible polynomial in $t,s$, so is the curve ${\mathcal G}$; since irreducibility is a birational invariant, we deduce that ${\mathcal C}$ is irreducible as well.

\vspace{0.3 cm}
In order to describe the topology of the curve ${\mathcal C}$, we will compute, as it is common, a graph \emph{ambient isotopic} to ${\mathcal C}$. 

\begin{definition}\label{isotop}
An \emph{ambient isotopy} between two subspaces $X,Y$ of ${\Bbb R}^n$ is a continuous function $H:{\Bbb R}^n\times [0,1]\to {\Bbb R}^n$ satisfying the following conditions: (1) $H(\bullet;0)$ is the identity; (2) $H(X;1)=Y$; (3) for all $\omega\in [0,1]$, $H(\bullet; \omega)$ is a homeomorphism from ${\Bbb R}^n$ to ${\Bbb R}^n$. We say that $X,Y\subset {\Bbb R}^n$ are \emph{isotopic}, if there exists an (ambient) isotopy $H$ satisfying these conditions.
\end{definition}

If $X,Y$ in Definition \ref{isotop} are 1-dimensional objects, the fact that $X,Y$ are ambient isotopic implies that one of them can be deformed into the other without removing or introducing self-intersections (see for instance \cite{Hirsch}). For simplicity, in the rest of the paper we will omit the word ``ambient" whenever we speak about isotopy.  Now we have the following definition.

\begin{definition}\label{topgraph}
Let ${\mathcal C}\subset {\Bbb R}^n$, where $n=2$ or $n=3$. A \emph{topological graph} of ${\mathcal C}$ is a graph $G_{\mathcal C}$ isotopic to ${\mathcal C}$ whose vertices lie on the curve ${\mathcal C}$.
\end{definition}

\begin{remark} Vertices of $G_{\mathcal C}$ with valence equal to one, i.e. belonging only to one edge, correspond to real branches of ${\mathcal C}$ at infinity. Thus, if $G_{\mathcal C}$ contains some vertex of this type, then ${\mathcal C}$ is not bounded. 
\end{remark}

Thus, our goal is to build an algorithm for computing a topological graph $G_{\mathcal C}$ of ${\mathcal C}$; we will refer to $G_{\mathcal C}$ as the graph \emph{associated with} ${\mathcal C}$. In order to do this, we will not compute $G_{\mathcal C}$ directly: instead, we will compute a graph $G_{\mathcal G}$ associated with ${\mathcal G}$, and we will derive $G_{\mathcal C}$ from $G_{\mathcal G}$ by studying how the topology of ${\mathcal G}$ changes when ${\bf x}$ is applied. Furthermore, in our analysis we do not consider isolated real points of ${\mathcal C}$, which can be generated by complex branches of ${\mathcal G}$ at infinity.
Let us briefly recall how graphs associated with planar and space curves are computed.

\vspace{0.3 cm}
\noindent \underline{\sc Graph associated with a planar curve.}

\vspace{3 mm}
Let $f(x,y)=0$ define a planar algebraic curve ${\mathcal F}$ without vertical asymptotes. We say that $P\in {\mathcal F}$ is \emph{regular} if either $f_x(P)\neq 0$ or $f_y(P)\neq 0$; otherwise, we say that $P$ is \emph{singular}. We say that $P\in {\mathcal F}$ is \emph{critical} if $P$ satisfies that $f(P)=f_y(P)=0$. A critical point which is not singular is called a \emph{ramification} point. The topological graph $G_f$ associated with ${\mathcal F}$ can be described as follows (see Fig. \ref{fig-graphs}, left): 

\begin{itemize}
\item The {\bf vertices} of the graph $G_f$ are: (1) the critical points of ${\mathcal F}$; (2) the points of ${\mathcal F}$ lying on the vertical lines through the critical points of ${\mathcal F}$ (we call these vertical lines, \emph{critical lines}); (3) the points of ${\mathcal F}$ lying on vertical lines placed: (3.1) between two consecutive critical lines, (3.2) at the left of the left-most critical point, and (3.3) at the right of the right-most critical point.  
\item Two vertices of $G_f$ are connected by an {\bf edge} of $G_f$ iff there is a real branch of ${\mathcal F}$ connecting the corresponding points on ${\mathcal F}$. 
\end{itemize}

The problem of computing a topological graph of an implicit planar curve is well-studied. The interested reader can check the references \cite{Berb, CLPPRT09, EKW07, GVN}, among others, for further information on the problem. Although it is customary, in most papers dealing with the problem of computing the graph $G_f$, to start with the assumption that ${\mathcal F}$ does not have vertical asymptotes or vertical components, one can adapt the strategy without assuming these properties; see for instance \cite{EKBS}.

\vspace{0.3 cm}
\noindent \underline{\sc Graph associated with a space curve.}

\vspace{3 mm}
Let $\{f_1(x,y,z)=0,\ldots,f_m(x,y,z)=0\}$ define a space algebraic curve ${\mathcal F}$: (i) without asymptotes parallel to the $z$-axis; (ii) without components parallel to the $z$-axis; (iii) such that the projection $\pi_{xy}({\mathcal F})$ of ${\mathcal F}$ onto the $xy$-plane is birational. Hypothesis (iii) ensures that there are not two different real branches of ${\mathcal F}$ projecting onto a same branch of $\pi_{xy}({\mathcal F})$. Taking advantage of Hypothesis (iii), the usual strategy to compute a topological graph $G_f$ isotopic to ${\mathcal F}$ is to birationally project ${\mathcal F}$ onto some plane, say, the $xy$-plane, then compute a graph isotopic to the projection $\pi_{xy}({\mathcal F})$, which is a planar algebraic curve, and later ``lift" the graph associated with $\pi_{xy}({\mathcal C})$ to a space graph. Since the projection $\pi_{xy}$ is birational, one can be sure that every edge of the graph associated with $\pi_{xy}({\mathcal F})$ lifts to one, and just one, edge of the graph associated with ${\mathcal F}$. More precisely, the graph $G_f$ associated with ${\mathcal F}$ can be described as follows (see Fig. \ref{fig-graphs}, right):  

\begin{itemize}
\item The {\bf vertices} of the graph $G_f$ are the points of ${\mathcal F}$ projecting as vertices of the graph associated with $\pi_{xy}({\mathcal F})$.
\item Two vertices of $G_f$ are connected by an {\bf edge} of $G_f$ iff the corresponding points of ${\mathcal C}$ are connected by a real branch of ${\mathcal C}$. Furthermore, if the vertices are not singularities of $\pi_{xy}({\mathcal C})$, we connect them iff their projections are connected in the graph associated with $\pi_{xy}({\mathcal F})$. For vertices corresponding to singularities of $\pi_{xy}({\mathcal C})$ the process is more complicated, since we can have two non-overlapping branches of ${\mathcal C}$ whose projections onto the $xy$-plane overlap (see Fig. \ref{fig-graphs}, left); for references on how to deal with this problem, one can check \cite{Daouda, Kahoui}. 
\end{itemize}

\begin{figure}
$$\begin{array}{c}
  \includegraphics[scale=0.4]{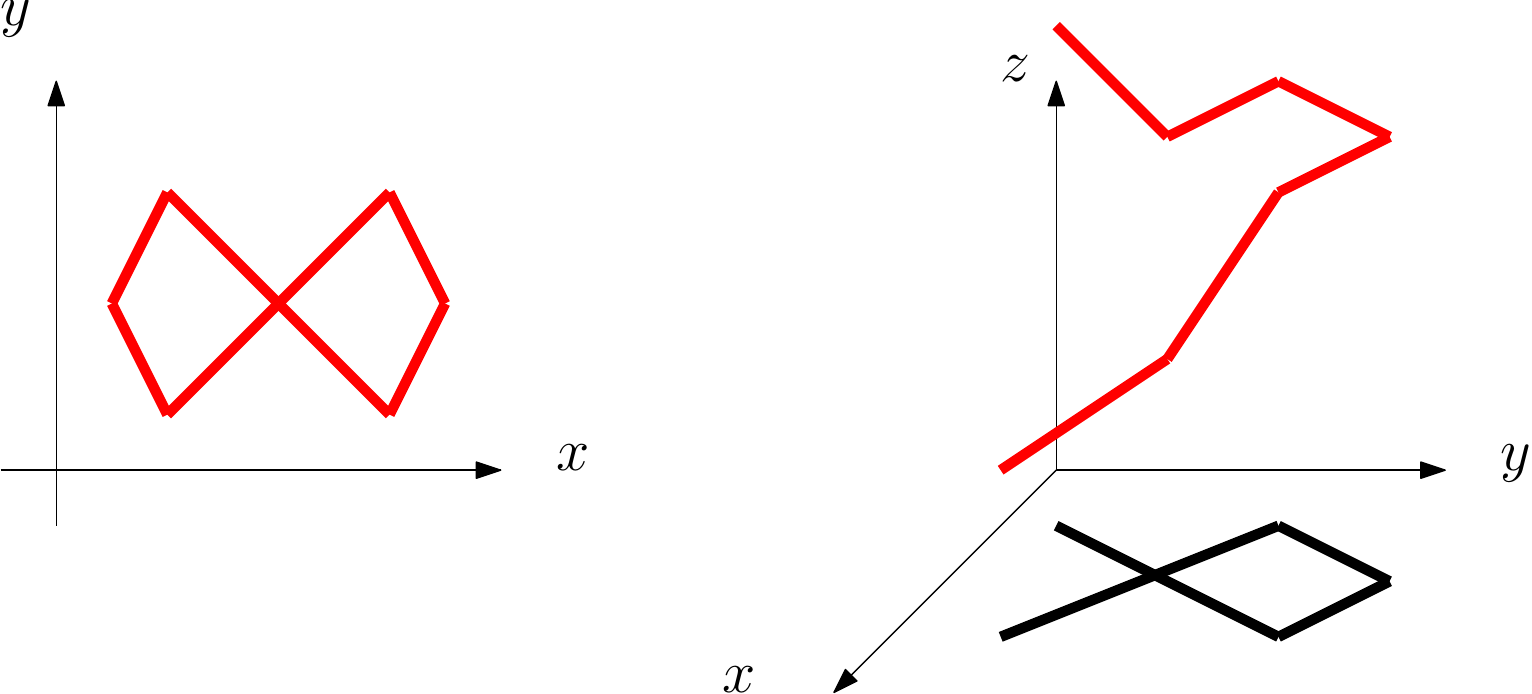}
 \end{array}$$
 \caption{Graphs associated with planar and space curves}\label{fig-graphs}
\end{figure}

The problem of computing a topological graph associated with an implicit space algebraic curve has received some attention in the literature, although less than the planar case. The interested reader can check the references \cite{AS05, Lazard13, Daouda, Kahoui} for more details on the problem. Again, as it also happens in the planar case, the strategy can be adapted to the case when ${\mathcal F}$ has vertical components or vertical asymptotes. 

\vspace{0.3 cm}
\noindent \underline{\sc In our case.}

\vspace{3 mm}

In our case, we need to compute the graph $G_{\mathcal G}$ associated with ${\mathcal G}$ plus some extra vertices $Q_i=(t_i,s_i)\in {\mathcal G}$. In particular, we need to include points $Q_i\in {\mathcal G}$ giving rise to certain notable points $P_i\in {\mathcal C}$, as we will see in the next sections. And we also need to include the points $Q_i\in {\mathcal G}$ where some component of ${\bf x}$ has the indeterminacy $\frac{0}{0}$, or becomes infinite. After including these vertices, we observe that ${\bf x}$ is continuous over each portion of the curve ${\mathcal G}$ corresponding to each edge of $G_{\mathcal G}$. Then, the key idea is that since the image of any connected subset of ${\mathcal G}$ is also connected, every edge $e$ of $G_{\mathcal G}$ gives rise to an edge $\tilde{e}$ of $G_{\mathcal C}$, namely the edge connecting the images of the vertices of $e$. Hence, the topology of ${\mathcal G}$ guides us to compute the topology of ${\mathcal C}$. 

The fact that ${\bf x}$ is birational over ${\mathcal G}$ guarantees that all the edges of $G_{\mathcal C}$ are obtained this way, since there cannot be any real branch of ${\mathcal C}$ coming from a complex branch of ${\mathcal G}$: indeed, if ${\mathcal B}\subset {\mathcal G}$ is a complex branch such that ${\bf x}({\mathcal B})$ is real, then ${\bf x}({\mathcal B})=\overline{{\bf x}({\mathcal B})}$, where $\overline{{\bf x}({\mathcal B})}$ denotes the conjugate of ${\bf x}({\mathcal B})$. But then there are infinitely many points of ${\mathcal C}$ with at least two pre-images, which cannot happen because ${\bf x}|_{\mathcal G}$ is birational. 

Therefore, the rough idea in order to build $G_{\mathcal C}$ is to compute the graph $G_{\mathcal G}$ (by using any of the well-known algorithms to do this), and the images $P_i$ of the vertices $V_i$ of $G_{\mathcal G}$. Then we connect the $P_i$ according to how their preimages $V_i={\bf x}|_{\mathcal G}^{-1}(P_i)$ are connected in ${\mathcal G}$. If some component of ${\bf x}(V_i)$ becomes infinite, then we have an open branch of ${\mathcal C}$, i.e. a branch of ${\mathcal C}$ going to infinity; in particular, in that case ${\mathcal C}$ is not bounded.

Fig. \ref{fig-idea} represents the idea of computing $G_{\mathcal C}$ from $G_{\mathcal G}$, for the case $n=2$: each edge, marked with a different color, of the graph $G_{\mathcal G}$ (left), gives rise to an edge, marked with the same color, of the graph $G_{\mathcal C}$ (right). 

Observe that since ${\mathcal G}$ is implicitly defined by Eq. \eqref{weierstrass}, the leading coefficient in the variable $s$ is constant, so ${\mathcal G}$ has no asymptotes parallel to the $s$-axis, which we take as the vertical axis in the $(t,s)$ plane. Additionally, since the Weierstrass form implies that $p(t)$ is square-free, one can see that ${\mathcal G}$ is regular, and that the only critical points are the points $\{s=0,p(t)=0\}$, all of which are ramification points, i.e. points where the tangent line to ${\mathcal G}$ is vertical. Because of this, ${\mathcal G}$ consists of open branches and$/$or closed components, without self-intersections. As a projective variey, though, ${\mathcal G}$ has a singular point, namely the point at infinity of ${\mathcal G}$ (in the direction of the $s$-axis).

Certainly, there can also be some points of ${\mathcal C}$ which do not belong to ${\bf x}({\mathcal G})$. The points in ${\mathcal C}-{\bf x}({\mathcal G})$ correspond to the images of the point at infinity of ${\mathcal G}$, and the limit points coming from the base points of ${\bf x}$ lying in ${\mathcal G}$, i.e. points of ${\mathcal G}$ where all the numerators and denominators of the components of ${\bf x}$ vanish simultaneously. Since ${\mathcal G}$ is regular over its affine part, we can be sure that ${\bf x}$ extends to its base points (see Theorem 1.2 of \cite{Shafarevich}), so that base points give rise to either affine points of ${\mathcal C}$, or points at infinity of ${\mathcal C}$. The effective computation of the images of base points of ${\bf x}$ on ${\mathcal G}$ is analyzed in the next section. On the other hand, ${\mathcal G}$ 
has one singular point at infinity with two different branches, i.e. two different \emph{places} centered at this point (see \cite{walker} for further information on places). This implies that the point at infinity of ${\mathcal G}$ can give rise to at most two points of ${\mathcal C}$, that can be affine, or at infinity. We denote these points by $P_{\infty},P_{-\infty}$, that may or may not coincide. This notation responds to the fact that these points are reached by analyzing the behavior of ${\bf x}|_{\mathcal G}$ when $t\to \infty$ and $t\to -\infty$. In the next section, we will consider the computation of these points, that we will represent in a more compact way by $P_{\pm \infty}$. 

\begin{figure}
$$\begin{array}{c}
  \includegraphics[scale=0.5]{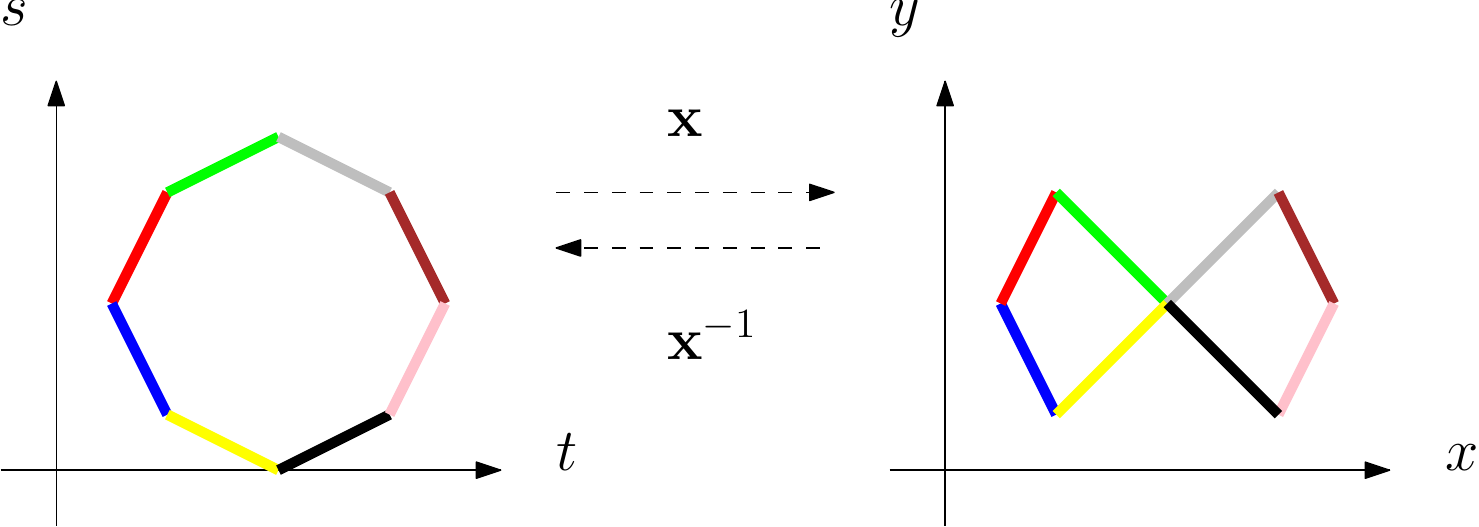}
 \end{array}$$
 \caption{$G_{\mathcal G}$ and $G_{\mathcal C}$}\label{fig-idea}
\end{figure}

\section{The planar case. }\label{sec-planar}

Let ${\bf x}:{\Bbb R}^2\to {\Bbb R}^2$, where 
\[
{\bf x}(t,s)=(x(t,s),y(t,s))=\left (\frac{A_1(t,s)}{B_1(t,s)},\frac{A_2(t,s)}{B_2(t,s)}\right),
\]
and let ${\mathcal C}={\bf x}({\mathcal G})$, where ${\mathcal G}$ is implicitly defined by an equation $g(t,s)=s^2-p(t)=0$ like Eq. \eqref{weierstrass}. The functions $x(t,s),y(t,s)$ are the \emph{components} of ${\bf x}(t,s)$. We require ${\bf x}$ to be a rational mapping satisfying that the restriction ${\bf x}|_{\mathcal G}$ is birational, so that ${\bf x}|_{\mathcal G}^{-1}:{\mathcal C}\to {\mathcal G}$ is well-defined, and therefore rational. We can always check this assumption with a probabilistic algorithm; we take a random point $(t_0,s_0)\in {\mathcal G}$, compute the point $P={\bf x}(t_0,s_0)$, and finally determine the preimages of ${\bf x}(t_0,s_0)$: if we get only one preimage belonging to ${\mathcal G}$, then with probability one the required hypothesis holds. Additionally, using repeatedly the fact that $s^2=p(t)$, we can write 
${\bf x}|_{\mathcal G}(t,s)$ in the following form: 
\begin{equation}\label{ofx} 
{\bf x}|_{\mathcal G}(t,s)=\left (\frac{A_1(t,s)}{B_1(t,s)},\frac{A_2(t,s)}{B_2(t,s)}\right)=\left(\frac{a_{11}(t)+sa_{12}(t)}{b_{11}(t)+sb_{12}(t)},\frac{a_{21}(t)+sa_{22}(t)}{b_{21}(t)+sb_{22}(t)}\right),
\end{equation}
where we can assume that $A_i,B_i$ are relatively prime for $i=1,2$. Observe that this implies $\gcd(a_{11},a_{12},b_{11},b_{12})=1$ and $\gcd(a_{21},a_{22},b_{21},b_{22})=1$. Notice also that in general $b_{11}(t)\neq b_{21}(t)$, $b_{12}(t)\neq b_{22}(t)$.

\vspace{0.3 cm}
As observed in Section \ref{sec-prelim}, we first need to describe the topology of ${\mathcal G}$ by means of a graph $G_{\mathcal G}$ isotopic to it, with some additional vertices. We need to include the following points as vertices of $G_{\mathcal G}$:

\begin{itemize}
\item [(i)] \emph{Critical points of $g(t,s)=0$}, i.e. points of ${\mathcal G}$ where $g_s=0$. 
\item [(ii)] \emph{Points of ${\mathcal G}$ giving rise to critical points of ${\mathcal C}$}.
\item [(iii)] \emph{Points of $\mathcal{G}$ where some component of ${\bf x}$ is not defined}.
\item [(iv)] \emph{Starting and ending points for open branches of ${\mathcal G}$}.
\end{itemize}

The points in (i) are the solutions of $g=g_s=0$, i.e. the points $\{s=0,p(t)=0\}$. The points in (iv) can be easily computed by taking a $t$-value at the left (resp. right) of the left-most (resp. the right-most) solution of $g=g_s$. The points in (iii) are the points $(t,s)\in {\mathcal G}$ such that $B_1(t,s)\cdot B_2(t,s)=0$. In particular, some of the points in (iii) may generate asymptotes of ${\mathcal C}$; also, \emph{base points} of ${\bf x}$ in ${\mathcal G}$, i.e. the points of ${\mathcal G}$ where 
\[
A_1(t,s)=B_1(t,s)=A_2(t,s)=B_2(t,s)=g(t,s)=0,
\]
are included in (iii). The topology of ${\mathcal G}$ is easy to capture (see for instance \cite{Bizzarri}), and can be computed by using known algorithms for planar curves like \cite{Berb, CLPPRT09, EKW07, GVN}.

\subsection{Computing the points of ${\mathcal G}$ giving rise to critical points of ${\mathcal C}$}

For simplicity, in this section we will assume that ${\bf x}$ has no base points on ${\mathcal G}$. These points, which may also generate critical points of ${\mathcal C}$, will be analyzed in the next subsection. Some observations on how to use the results in this subsection in the presence of base points will be done at the end of the subsection. Additionally, if the points $P_{\pm \infty}$ are affine they may be critical points of ${\mathcal C}$ as well. The behavior of $P_{\pm \infty}$ will be studied in Subsection \ref{subsec-nueva}.

Now in Section \ref{sec-prelim} we recalled that the critical points of ${\mathcal C}$ are either singularities, or ramification points, i.e. points where the tangent line is vertical. It is useful to distinguish two types of singularities : \emph{local singularities}, which correspond to singular points $P\in {\mathcal C}$ with just one branch of ${\mathcal C}$ through $P$, and \emph{self-intersections} of ${\mathcal C}$, which correspond to points $P\in {\mathcal C}$ with at leat two different branches of ${\mathcal C}$ through $P$. In Fig. \ref{localsingu} we show three examples of local singularities, two of them cuspidal (first two curves, starting from the left) and one of them non-cuspidal (third curve, starting from the left), and a self-intersection (right-most curve); see \cite{AS07} for more information on local singularities.

\begin{figure}[h]
$$
\begin{array}{cccc}
\includegraphics[scale=0.17]{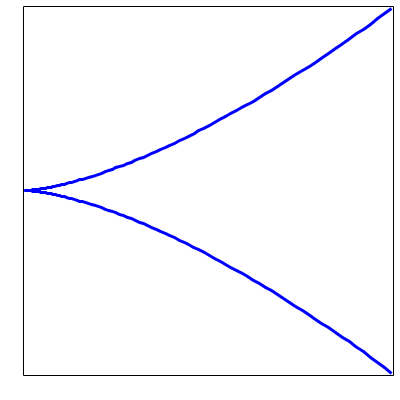}&
\includegraphics[scale=0.17]{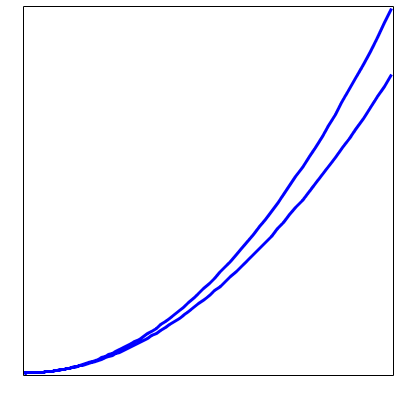}&
\includegraphics[scale=0.17]{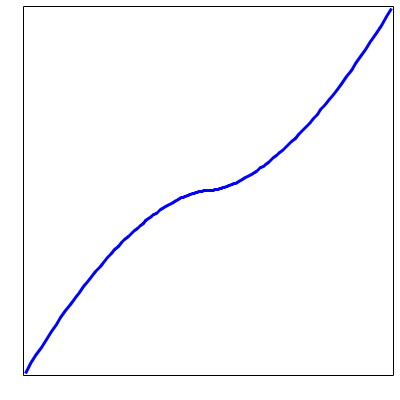}&
\includegraphics[scale=0.17]{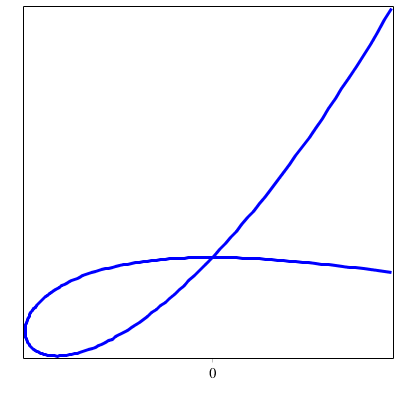}
\end{array}
$$
\caption{Local singularities (three local singularities and q self-intersection (right-most curve).}\label{localsingu}
\end{figure}

In order to compute the points of ${\mathcal G}$ giving rise to local singularities and ramification points of ${\mathcal C}$, we analyze ${\bf x}({\mathcal G})$, where ${\mathcal G}$ is implicitly defined by $g(t,s)=0$. The differential of ${\bf x}$ defines a mapping between the tangent space to ${\mathcal G}$ and the tangent space to ${\mathcal C}$, at corresponding points. Denoting a generic element of the tangent space to ${\mathcal C}$ by $\bfv=(v_1,v_2)$, we have the following relationship; here, $x_t$ represents the partial derivative of $x(t,s)$ with respect to the variable $t$, and similarly for $y_t,x_s,y_s,g_t,g_s$:

\begin{equation} \label{tangent-space}
\begin{bmatrix}
x_t & x_s\\
y_t & y_s
\end{bmatrix}
\cdot 
\begin{bmatrix}
g_s \\
-g_t
\end{bmatrix}=
\begin{bmatrix}
v_1\\
v_2
\end{bmatrix}
\end{equation}

The above relationship follows from differentiating with respect to $t$ the components of ${\bf x}|_{\mathcal G}$. Whenever $g_s\neq 0$ (i.e. whenever $(t,s)$ is not a ramification point of ${\mathcal G}$), $g(t,s)=0$ implicitly defines a differentiable function $s=s(t)$, where $\frac{ds}{dt}=-\frac{g_t}{g_s}$. Now differentiating ${\bf x}(t,s)=0$ where $s=s(t)$ is the function defined by $g(t,s)=0$, and using the Chain Rule, we get a vector $\bfw$ which is parallel to the vector $\bfv$ in Eq. \eqref{tangent-space}. For the points where $g_s=0$, we can proceed in the same way, reaching the same result, differentiating with respect to $s$ instead. Since all affine points of ${\mathcal G}$ are regular, i.e. either $g_t$ or $g_s$ are nonzero, Eq. \eqref{tangent-space} holds.

\begin{lemma} \label{local}
Suppose that ${\bf x}$ has no base points lying on ${\mathcal G}$, and let $P\in {\mathcal C}$, $P\neq P_{\pm \infty}$, $P={\bf x}(t_0,s_0)$, where $(t_0,s_0)\in {\mathcal G}$. If $P$ is a either a local singularity or a ramification point of ${\mathcal C}$, then $(t_0,s_0)$ satisfies that
\begin{equation}\label{eq-local}
g=x_tg_s-x_sg_t=0.
\end{equation}
\end{lemma}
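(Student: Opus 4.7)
The plan is to recognize Eq.~\eqref{tangent-space} as saying that $\mathbf{v}=(v_1,v_2)=(x_tg_s-x_sg_t,\,y_tg_s-y_sg_t)$ is a tangent vector to $\mathcal{C}$ at $P={\bf x}(t_0,s_0)$, obtained by pushing forward the tangent vector $(g_s,-g_t)$ to $\mathcal{G}$ at $(t_0,s_0)$ under the differential of ${\bf x}$. This is justified by the derivation immediately preceding the lemma: since $(t_0,s_0)$ is affine, $\mathcal{G}$ is regular there (either $g_t$ or $g_s$ is nonzero), and implicit differentiation of $g=0$ combined with the chain rule applied to the components of ${\bf x}|_{\mathcal G}$ yields this vector. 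The hypotheses that ${\bf x}$ has no base points on $\mathcal{G}$ and $P\neq P_{\pm\infty}$ ensure that ${\bf x}$ is smooth at $(t_0,s_0)$ and that $(t_0,s_0)$ is the unique preimage of $P$ under the birational map ${\bf x}|_{\mathcal{G}}$.

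Given this, I will split into the two cases. First, suppose $P$ is a \emph{ramification} point of $\mathcal{C}$. By the definition recalled in Section~\ref{sec-prelim}, this means the tangent line to $\mathcal{C}$ at $P$ is vertical, so every tangent vector at $P$ has vanishing first coordinate. Applying this to $\mathbf{v}$ gives $v_1=x_tg_s-x_sg_t=0$ directly.

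Second, suppose $P$ is a \emph{local singularity}, i.e.\ a singular point of $\mathcal{C}$ through which only one branch of $\mathcal{C}$ passes. Here I argue by contradiction: if $\mathbf{v}\neq\mathbf{0}$ at $(t_0,s_0)$, then, because $(g_s,-g_t)\neq\mathbf{0}$ at $(t_0,s_0)$, the restriction of ${\bf x}$ to a parametrization of the single branch of $\mathcal{G}$ through $(t_0,s_0)$ would be a smooth immersion at $(t_0,s_0)$. Since $P$ has only one local branch, the birationality of ${\bf x}|_{\mathcal{G}}$ rules out any second sheet of $\mathcal{C}$ through $P$, so this immersion would in fact be a local embedding, exhibiting $\mathcal{C}$ as a smooth submanifold near $P$. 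That contradicts $P$ being singular. Hence $\mathbf{v}=\mathbf{0}$, and in particular $v_1=x_tg_s-x_sg_t=0$.

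Combining either case with the fact that $(t_0,s_0)\in\mathcal{G}$ gives $g(t_0,s_0)=0$ yields the claimed system \eqref{eq-local}. The main delicacy I expect is the local-singularity case: one must clearly separate ``single branch'' (handled by the lemma) from ``self-intersection'' (not covered, because two distinct preimages contribute independent tangent directions, neither of which need be annihilated), and invoke birationality plus absence of base points to exclude a second preimage at $(t_0,s_0)$ that could spoil the immersion-implies-embedding step. Everything else is a direct translation of the geometric conditions ``vertical tangent'' and ``no local immersion'' into the vanishing of $v_1$.
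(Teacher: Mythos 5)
Your proof is correct, and it takes essentially the approach the paper has in mind: the paper does not actually spell out a proof of Lemma~\ref{local}, but presents it as an immediate consequence of the tangent-pushforward relationship~\eqref{tangent-space} and of Remark~\ref{rem-sing} (which records the stronger conclusion $v_1=v_2=0$ for local singularities). Your ramification-point case (vertical tangent forces $v_1=0$) is exactly the intended reading, and your local-singularity case formalizes the step the paper leaves implicit. The only place that deserves a word more of care is the passage ``immersion $\Rightarrow$ embedding $\Rightarrow$ regular'': what makes this work for a \emph{plane} curve with a \emph{single} analytic branch at $P$ is that the multiplicity of $\mathcal{C}$ at $P$ is the sum of the multiplicities of its Puiseux branches; a branch whose parametrization has nonvanishing derivative has multiplicity $1$, so with one such branch $P$ has multiplicity $1$ and is regular --- and you correctly invoked birationality (rather than the stated hypotheses alone) to exclude a second branch arriving from a distinct preimage. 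That birationality argument is what justifies the opening claim that $(t_0,s_0)$ is the unique preimage; the hypotheses on base points and $P\neq P_{\pm\infty}$ only ensure $\mathbf{x}$ is actually defined and finite there.
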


\begin{remark} \label{rem-sing} 
For the local singularities we have
\begin{equation}\label{eq-local}
g=x_tg_s-x_sg_t=y_tg_s-y_sg_t=0.
\end{equation}
\end{remark}

However, Lemma \ref{local} does not necessarily provide the self-intersections of ${\mathcal C}$. In order to find these last singularities, we imitate the strategy in \cite{ADC15}. First we define 
\begin{equation} \label{eq-xis}
\begin{array}{rcl}
\xi_1(x,t) &=& \mbox{square-free part of }\mbox{Res}_s(\mbox{num}(x-x(t,s)),g(t,s)),\\
\xi_2(x,y,t) &=& \mbox{square-free part of }\mbox{Res}_s(\mbox{num}(x-x(t,s)),\mbox{num}(y-y(t,s))),
\end{array}
\end{equation}

\noindent where $\mbox{num}(\bullet)$ denotes the numerator of the rational function $\bullet$. Notice that in general, eliminating $t$ in $\xi_1(x,t)=0,\mbox{ }\xi_2(x,y,t)=0$ by means of the resultant $\mbox{Res}_t(\xi_1(x,t),\xi_2(x,y,t))$, we obtain a polynomial in $x,y$ containing, as a factor, the implicit equation of ${\mathcal C}$. Using the definition of the resultant, one can easily check that $\xi_1(x,t)$ is a quadratic polynomial in $x$, and $\xi_2(x,y,t)$ is quadratic as a polynomial in $x,y$, and linear in $x$ and in $y$ (i.e. $\xi_2(x,y,t)$ is bilinear). 

Now the key idea to find the self-intersections of ${\mathcal C}$ is that these points are among the points $(x,y)\in {\mathcal C}$ where $t={\bf x}|_{\mathcal G}^{-1}(x,y)$ is not defined. For a generic point $(x_0,y_0)\in {\mathcal C}$, we can find $t_0={\bf x}|_{\mathcal G}^{-1}(x_0,y_0)$ as the \emph{only} root of $\mbox{gcd}(\xi_1(x_0,t),\xi_2(x_0,y_0,t))$. In order to find the \emph{function} $t=t(x,y)={\bf x}|_{\mathcal G}^{-1}(x,y)$, we can compute the gcd of $\xi_1(x,t)$ and $\xi_2(x,y,t)$ as polynomials in the variable $t$ whose coefficients are real polynomials in $x,y$, with the additional condition $f(x,y)=0$, where $f$ is the implicit equation of ${\mathcal C}$. More formally, one sees $\xi_1(x,t)$ and $\xi_2(x,y,t)$ as elements of ${\Bbb R}({\mathcal C})[t]$, where ${\Bbb R}({\mathcal C})$ is the field of real rational functions of ${\mathcal C}$. Since ${\mathcal C}$ is irreducible ${\Bbb R}({\mathcal C})$ is a Euclidean domain. Therefore 
\[D(x,y,t)=\gcd_{{\Bbb R}({\mathcal C})[t]}(\xi_1,\xi_2)\]
is well-defined and can be computed, for instance, by means of the Euclidean algorithm. Since ${\bf x}|_{\mathcal G}$ is proper, $D(x,y,t)$ is linear in $t$ and solving $D(x,y,t)=0$ for $t$, one gets $t={\bf x}|_{\mathcal G}^{-1}(x,y)$. 

Following the ideas of \cite{ADC15}, one can compute ${\bf x}|_{\mathcal G}^{-1}(x,y)$ more eficiently as follows (see \cite{ADC15} for further detail). By the fundamental property of subresultants, $D(x,y,t)$ is the first subresultant different from zero (modulo $f(x,y)$) in the subresultant chain of $\xi_1,\xi_2$, seen as elements of the domain ${\Bbb R}[x,y][t]$. If the degrees of $\xi_1,\xi_2$ as elements of ${\Bbb R}[x,y][t]$ are $n_1,n_2$, the elements of the subresultant chain are represented as 
\[
\{\mathbf{Subres}_i(\xi_1,n_1,\xi_2,n_2)_{i\geq 0}\},
\]
with $0\leq i \leq \mbox{inf}(n_1,n_2)-1$, and can be defined as determinants of order $n_1+n_2-i$ of Sylvester-like matrices whose entries are related to the coefficients of $\xi_1,\xi_2$ (see Section 2.2 of \cite{ADC15}). Since $\mbox{deg}(\mathbf{Subres}_i(\xi_1,n_1,\xi_2,n_2))\leq i$, and by the birationality of ${\bf x}|_{\mathcal G}$ we have $\mbox{deg}(G(x_0,y_0,t))=1$ for almost all $(x_0,y_0)\in {\mathcal C}$, we deduce that $D(x,y,t)$ is equal to $\mathbf{Subres}_1(\xi_1,n_1,\xi_2,n_2)$; notice that $\mathbf{Subres}_1(\xi_1,n_1,\xi_2,n_2)$ can be computed without actually knowing the implicit equation of ${\mathcal C}$. Writing
\[
\begin{array}{c}
\mathbf{Subres}_1(\xi_1,n_1,\xi_2,n_2)(t)=\mathbf{sres}_1(x,y)\,t+\mbox{sr}_1(x,y),  
\end{array}
\]
we have that 
\begin{equation} \label{t-inv}
t={\bf x}|_{\mathcal G}^{-1}(x,y)=-\frac{\mathrm{sr}_1(x,y)}{\mathbf{sres}_1(x,y)}.
\end{equation}
The polynomial $\mathbf{sres}_1(x,y)$ is called the first principal subresultant of $\xi_1,n_1$ and $\xi_2,n_2$. Finally we get the following result. 

\begin{theorem}\label{th-self} Suppose that ${\bf x}$ has no base points lying on ${\mathcal G}$, and let $P\in {\mathcal C}$, $P={\bf x}(t_0,s_0)$, $P\neq P_{\pm \infty}$. If $P$ is a self-intersection, then $(t_0,s_0)$ is a solution of the bivariate polynomial system
\begin{equation}\label{fund-system}
\mathbf{sres}_1(x(t,s),y(t,s))=0,\mbox{ }g(t,s)=0.
\end{equation}
\end{theorem}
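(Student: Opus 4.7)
The equation $g(t_0,s_0)=0$ is immediate since $(t_0,s_0)\in{\mathcal G}$, so the whole content of the theorem is the vanishing of $\mathbf{sres}_1$ at $(x_0,y_0):=P={\bf x}(t_0,s_0)$. My plan is to use the specialization property of subresultants. As explained in the discussion leading to \eqref{t-inv}, for a generic $(x,y)\in{\mathcal C}$ the polynomial $\mathbf{Subres}_1(\xi_1,n_1,\xi_2,n_2)(t)$ has degree exactly $1$ in $t$ and agrees, up to a non-zero scalar, with $\gcd_{{\Bbb R}[t]}(\xi_1(x,\cdot),\xi_2(x,y,\cdot))$, whose unique root gives ${\bf x}|_{\mathcal G}^{-1}(x,y)$. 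If I can show that at the self-intersection $(x_0,y_0)$ the specialized $\gcd$ jumps to degree at least $2$ in $t$, then the specialization of $\mathbf{Subres}_1$ cannot realize this $\gcd$, and by the specification property of subresultants it must vanish identically as a polynomial in $t$; in particular its leading coefficient $\mathbf{sres}_1(x_0,y_0)$ is zero, as required.

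To force the specialized $\gcd$ to have degree $\geq 2$ I would distinguish two cases, according to whether a second preimage of $P$ in ${\mathcal G}$ shares the $t$-coordinate of $(t_0,s_0)$ or not. In the first case there is a preimage $(t_1,s_1)\neq(t_0,s_0)$ with $t_1\neq t_0$; both $t_0$ and $t_1$ are common roots of $\xi_1(x_0,\cdot)$ and $\xi_2(x_0,y_0,\cdot)$ by construction of the resultants, so the specialized $\gcd$ has two distinct roots and degree $\geq 2$. In the second case the only other preimage must be $(t_0,-s_0)$ with $s_0\neq 0$. Using the form \eqref{ofx}, the numerators $\mathrm{num}(x-x(t,s))$ and $\mathrm{num}(y-y(t,s))$ are linear in $s$, and the coincidence $x(t_0,s_0)=x(t_0,-s_0)=x_0$ forces the two coefficients (in $s^0$ and $s^1$) of $\mathrm{num}(x_0-x(t_0,s))$ to vanish simultaneously, since a linear polynomial in $s$ cannot have two opposite roots unless it is identically zero; the same happens for $y$. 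Expanding $\xi_1=\mathrm{Res}_s(\mathrm{num}(x-x(t,s)),s^2-p(t))$ and $\xi_2=\mathrm{Res}_s(\mathrm{num}(x-x(t,s)),\mathrm{num}(y-y(t,s)))$ through their product-of-evaluations / determinantal descriptions then shows that $t_0$ is a zero of order at least $2$ of both $\xi_1(x_0,t)$ and $\xi_2(x_0,y_0,t)$, so the specialized $\gcd$ is divisible by $(t-t_0)^2$ and again has degree $\geq 2$.

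In either case the subresultant specialization property yields $\mathbf{sres}_1(x(t_0,s_0),y(t_0,s_0))=0$, which combined with the trivial $g(t_0,s_0)=0$ produces the system \eqref{fund-system}. The step I expect to be the most delicate is case two: a self-intersection with coinciding $t$-coordinate does not a priori break the single-valuedness of the formula \eqref{t-inv}, so one cannot read off $\mathbf{sres}_1=0$ from a distinct-preimage count and has to derive it from a multiplicity computation. The key technical point is that the simultaneous vanishing of both $s$-coefficients in the linear numerators at $t=t_0$ produces a \emph{double} zero of the resultants, not merely a simple one; verifying this through the explicit resultant formulas is where I expect the main work of the proof to lie.
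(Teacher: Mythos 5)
Your plan is essentially correct and, in fact, more careful than the reasoning the paper offers: the paper gives no formal proof of this theorem, relying instead on the informal claim that at a self-intersection the inverse $t=-\mathrm{sr}_1/\mathbf{sres}_1$ is ``not defined,'' which is immediate only when the two preimages have different $t$-coordinates. Your Case 1 makes that observation rigorous via the degree-two gcd, and your Case 2 -- the same $t_0$, opposite $s$-values -- is precisely the situation the paper's intuitive argument does not cover, since there the formula for $t$ returns the single value $t_0$ and nothing in the ``inverse is multivalued'' slogan forces $\mathbf{sres}_1(x_0,y_0)=0$. Your multiplicity argument in that case is the right fix: from the two opposite roots $\pm s_0$ of the $s$-linear numerators one gets $x_0b_{12}(t_0)-a_{12}(t_0)=x_0b_{11}(t_0)-a_{11}(t_0)=0$ and the analogous vanishing for $y$; writing $\mathrm{Res}_s(h,g)=\beta^2-p\alpha^2$ and $\mathrm{Res}_s(h,j)=\alpha_1\beta_2-\alpha_2\beta_1$ then exhibits $(t-t_0)^2$ as a factor of each specialization, forcing the gcd degree to be at least two, and the standard $u\xi_1+v\xi_2$ representation of $\mathbf{Subres}_1$ kills it.

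One gap worth flagging: the paper's $\xi_1,\xi_2$ are the \emph{square-free parts} of those resultants, and you reason directly with the resultants. A priori, removing a repeated irreducible factor from $\mathrm{Res}_s(h,g)$ could drop the multiplicity of $t_0$ in the specialization below $2$. This does not actually happen here, but it needs to be said: over $\RR(t)$ the quadratic $\mathrm{Res}_s(h,g)$ in $x$ has discriminant $4p\,(a_{11}b_{12}-a_{12}b_{11})^2$, which cannot be a square in $\RR(t)$ unless $x(t,s)$ is independent of $s$ (the case Lemma~\ref{tx-factor} sends to $\tilde{\xi}_2$); hence $\mathrm{Res}_s(h,g)$ has no repeated non-content factor, and its content cannot vanish at $t_0$ because that would make $(t_0,\pm s_0)$ a base point of the $x$-component, contradicting that $\mathbf{x}(t_0,\pm s_0)$ is finite. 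With that remark supplied, the argument closes. You should also state explicitly that no base points on $\mathcal{G}$ (via Lemma~\ref{t-factor}) guarantees $\xi_1,\xi_2$ have no common $t$-content, so that $\mathbf{Subres}_1$ is genuinely a polynomial of degree $\le 1$ in $t$ rather than identically zero for spurious reasons.
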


The next result shows that, in fact, {\it all} the singularities of ${\mathcal C}$, i.e. the local singularities and the self-intersections, except perhaps for $P_{\pm \infty}$, are solutions of Eq. \eqref{fund-system}. The proof of this result in given in Appendix I, so as not to stop the flow of the paper. 

\begin{proposition} \label{othersing}
Let $(t_0,s_0)\in\mathcal{G}$ be a point such that 
\[(x_0,y_0)=(x(t_0,s_0),y(t_0,s_0))\in {\mathcal C}\]
is not a self-intersection, with
\begin{equation}\label{lacondi}
x_t(t_0,s_0)g_s(t_0,s_0)-x_s(t_0,s_0)g_t(t_0,s_0) = y_t(t_0,s_0)g_s(t_0,s_0)-y_s(t_0,s_0)g_t(t_0,s_0)=0.
\end{equation}
Then $\mathbf{sres}_1(x_0,y_0)=0$.
\end{proposition}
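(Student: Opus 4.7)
The strategy is to show that hypothesis \eqref{lacondi} forces $t_0$ to be a root of multiplicity at least two of each of the specialized polynomials $\xi_1(x_0,t)$ and $\xi_2(x_0,y_0,t)$; combined with the fact that $(x_0,y_0)$ is not a self-intersection (so no further preimages contribute independent common roots), this gives $\deg\gcd\geq 2$, and hence $\mathbf{sres}_1(x_0,y_0)=0$ by the standard characterization of the subresultant chain recalled just before the proposition.

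First I would write down explicit forms. Since both numerators in Eq. \eqref{ofx} are linear in $s$ and $g=s^2-p(t)$, a standard resultant computation gives, before extracting square-free parts,
\[
\tilde\xi_1(x,t) = P_1^2 - p(t)\,Q_1^2, \qquad \tilde\xi_2(x,y,t) = P_1 Q_2 - Q_1 P_2,
\]
where $P_1=xb_{11}-a_{11}$, $Q_1=xb_{12}-a_{12}$, $P_2=yb_{21}-a_{21}$, $Q_2=yb_{22}-a_{22}$. Using $x_0 B_1(t_0,s_0)=A_1(t_0,s_0)$ and $y_0 B_2(t_0,s_0)=A_2(t_0,s_0)$, one obtains $P_i(t_0)=-s_0\,Q_i(t_0)$ at the relevant specialization, and direct substitution confirms $\tilde\xi_1(x_0,t_0)=\tilde\xi_2(x_0,y_0,t_0)=0$, so $t_0$ is at least a simple root of each.

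The heart of the proof is a derivative computation. Differentiating the auxiliary identity $(x-x(t,s))\,B_1(t,s)=P_1+sQ_1$ with respect to $t$ and with respect to $s$, and specializing at $x=x_0$, $(t,s)=(t_0,s_0)$ (where $x-x(t,s)$ vanishes), yields $\partial_t P_1+s_0\,\partial_t Q_1 = -x_t B_1$ and $Q_1(x_0,t_0)=-x_s B_1$ at $(t_0,s_0)$, together with the analogous formulas for $y$. Plugging these into $\partial_t\tilde\xi_1(x_0,t_0)$ and using $p(t_0)=s_0^2$, $g_s=2s$, $g_t=-p'(t)$, a short manipulation collapses it to $-B_1^2\,x_s\,(x_t g_s - x_s g_t)$ at $(t_0,s_0)$; the parallel calculation for $\tilde\xi_2$ gives $\partial_t\tilde\xi_2(x_0,y_0,t_0)=B_1 B_2\,(x_t y_s - x_s y_t)$ at $(t_0,s_0)$. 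Hypothesis \eqref{lacondi} kills the first expression immediately; for the second, if $g_s(t_0,s_0)\neq 0$ one eliminates from the two equations in \eqref{lacondi} to obtain $x_t y_s - x_s y_t=0$, while if $g_s(t_0,s_0)=0$ then $s_0=0$, the square-freeness of $p$ gives $p'(t_0)\neq 0$, and \eqref{lacondi} forces $x_s=y_s=0$ at $(t_0,0)$, so $x_t y_s - x_s y_t$ vanishes trivially. Thus $t_0$ is a double root of both $\tilde\xi_1(x_0,t)$ and $\tilde\xi_2(x_0,y_0,t)$.

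Since $(x_0,y_0)$ is not a self-intersection, $(t_0,s_0)$ is the unique point of $\mathcal G$ mapping to $(x_0,y_0)$, so the common roots of $\xi_1(x_0,t),\xi_2(x_0,y_0,t)$ coming from actual preimages are concentrated at $t_0$; together with the double-root property this yields $\deg\gcd(\xi_1(x_0,t),\xi_2(x_0,y_0,t))\geq 2$, and hence $\mathbf{sres}_1(x_0,y_0)=0$. The step I expect to be most delicate is verifying that the multiplicity at $t_0$ survives the square-free reduction from $\tilde\xi_i$ to $\xi_i$: generically $\tilde\xi_1=P_1^2-p\,Q_1^2$ is irreducible in $\mathbb R[x,t]$, since any factorization into polynomials linear in $x$ would force $\sqrt p\in\mathbb R[t]$, contradicting the square-freeness of $p$, and $\tilde\xi_2$ is likewise generically irreducible as a bilinear form in $x,y$ with coefficients in $\mathbb R[t]$, so in these cases $\xi_i=\tilde\xi_i$ and the multiplicity transfers verbatim. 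The remaining non-generic subcases must be checked by inspection to ensure that no $t_0$-vanishing factor is being stripped off by the square-free operation; this bookkeeping is the main technical hurdle.
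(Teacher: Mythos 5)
Your proposal takes a genuinely different route from the paper's. The paper's proof is geometric: it introduces the variety ${\mathcal V}=V(\mathrm{num}(x-x(t,s)),\mathrm{num}(y-y(t,s)),g)$ in $\mathbb{R}^4(t,s,x,y)$ and its projection $\widehat{\mathcal V}$ to $\mathbb{R}^3(t,x,y)$, shows that condition \eqref{lacondi} forces the kernel of the Jacobian of the defining system to be spanned by $(-g_s,g_t,0,0)$ (in the smooth case) or to contain the whole $(t,s)$-plane (in the singular case), deduces that the tangent direction of $\widehat{\mathcal V}$ at $(t_0,x_0,y_0)$ is parallel to the $t$-axis, and reads off from that tangency that $t_0$ is a root of multiplicity $\geq 2$ of $\xi_1(x_0,t)$ and $\xi_2(x_0,y_0,t)$. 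You instead differentiate the explicit resultant formulas $\tilde\xi_1=P_1^2-p\,Q_1^2$ and $\tilde\xi_2=P_1Q_2-Q_1P_2$, and your identities $\partial_t\tilde\xi_1(x_0,t_0)=-B_1^2\,x_s(x_tg_s-x_sg_t)$ and $\partial_t\tilde\xi_2(x_0,y_0,t_0)=B_1B_2(x_ty_s-x_sy_t)$ at $(t_0,s_0)$ check out, as does your reduction of \eqref{lacondi} to $x_ty_s-x_sy_t=0$ in both the $g_s\neq0$ and $g_s=0$ cases. So both proofs converge on the same intermediate fact (a multiple root of the specialized $\xi_i$'s at $t_0$), but by very different means. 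What your computation buys is concreteness and a check that is easy to verify symbolically; what the paper's tangent-space argument buys is coordinate-freedom and a uniform treatment of the smooth and singular cases of ${\mathcal V}$. Neither proof actually uses the "not a self-intersection" hypothesis in this direction, which is simply carried along to distinguish the proposition from Theorem \ref{th-self}.

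Regarding the gap you flag about the square-free reduction: it is a legitimate concern, but it is narrower than you suggest. Since $p$ is square-free, the discriminant of $\tilde\xi_1$ as a quadratic in $x$ equals $4p(b_{11}a_{12}-b_{12}a_{11})^2$, which is not a square in $\mathbb{R}(t)$ unless $x(t,s)$ is independent of $s$ (the case handled separately by Lemma \ref{tx-factor}); so $\tilde\xi_1$ has no repeated $x$-dependent irreducible factor, and $\tilde\xi_2$, being bilinear, likewise has none in $x$ or $y$. The only way the multiplicity at $t_0$ can be lost in passing to the square-free part is through a repeated pure-$t$ factor in the content, which is a degenerate possibility tied to base-point-like behavior and effectively excluded in the setting of Theorem \ref{all-the-sings}. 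Notably, the paper's geometric argument carries a parallel unaddressed subtlety, namely that $\widehat{\mathcal V}$ is only known to be contained in $V(\xi_1,\xi_2)$, so tangency to $\widehat{\mathcal V}$ does not formally entail tangency to $V(\xi_1,\xi_2)$ without further comment; your version at least makes the technical point visible and localizes it to the content.
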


Proposition \ref{othersing} provides the following result.

\begin{theorem}\label{all-the-sings}
Suppose that ${\bf x}$ has no base points lying on ${\mathcal G}$. Then every singularity of ${\mathcal C}$, except perhaps for $P_{\pm \infty}$, is a solution of Eq. \eqref{fund-system}.
\end{theorem}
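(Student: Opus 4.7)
The plan is to observe that Theorem \ref{all-the-sings} is essentially a packaging result: it combines Theorem \ref{th-self} with Proposition \ref{othersing} by splitting the singularities of $\mathcal{C}$ into the two disjoint classes introduced in Section \ref{sec-planar}, namely self-intersections and local singularities. In both cases the condition $g(t_0,s_0)=0$ is free, because any point $(t_0,s_0)\in\mathcal{G}$ with ${\bf x}(t_0,s_0)\in\mathcal{C}$ lies on $\mathcal{G}$; so the only thing to verify is the vanishing of $\mathbf{sres}_1(x(t_0,s_0),y(t_0,s_0))$.

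First I would take $P\in\mathcal{C}\setminus\{P_{\pm\infty}\}$ a singularity, and write $P={\bf x}(t_0,s_0)$ with $(t_0,s_0)\in\mathcal{G}$ (this is possible because ${\bf x}$ has no base points on $\mathcal{G}$ by hypothesis, so every point of $\mathcal{C}$ different from $P_{\pm\infty}$ is the image of an affine point of $\mathcal{G}$). If $P$ is a self-intersection, Theorem \ref{th-self} directly yields $\mathbf{sres}_1(x(t_0,s_0),y(t_0,s_0))=0$, which together with $g(t_0,s_0)=0$ is exactly \eqref{fund-system}.

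The remaining case is when $P$ is a local singularity, i.e.\ a singular point with only one branch of $\mathcal{C}$ through it. In particular, $P$ is not a self-intersection, so the main hypothesis of Proposition \ref{othersing} is met. The tangential hypothesis \eqref{lacondi} of that proposition is precisely the content of Remark \ref{rem-sing}: at a local singularity, the image of any tangent vector of $\mathcal{G}$ under the differential $d{\bf x}$ must vanish (otherwise, tracing the smooth branch of $\mathcal{G}$ through $(t_0,s_0)$ by ${\bf x}$ would yield a smooth branch of $\mathcal{C}$ at $P$, contradicting singularity), and by \eqref{tangent-space} this is equivalent to $x_tg_s-x_sg_t=y_tg_s-y_sg_t=0$ at $(t_0,s_0)$. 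Therefore Proposition \ref{othersing} gives $\mathbf{sres}_1(x(t_0,s_0),y(t_0,s_0))=0$, and again $(t_0,s_0)$ solves \eqref{fund-system}.

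Since the two cases exhaust all singularities, the theorem follows. The only conceptual point that needs attention is the dichotomy itself, i.e.\ making sure the two cases really cover every singularity $P\neq P_{\pm\infty}$ and that the birationality of ${\bf x}|_{\mathcal G}$ together with the absence of base points guarantees a unique affine preimage $(t_0,s_0)$ in each case; the heavy lifting (showing that the tangential condition forces $\mathbf{sres}_1=0$) has already been absorbed into Proposition \ref{othersing}, whose proof is deferred to the appendix and is where I would expect the main technical obstacle to lie.
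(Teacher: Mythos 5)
Your proposal is correct and takes essentially the same route as the paper: the paper states Theorem \ref{all-the-sings} as an immediate consequence of Proposition \ref{othersing} (which supplies the local-singularity case via the tangential condition \eqref{lacondi} from Remark \ref{rem-sing}) together with the already-established Theorem \ref{th-self} for self-intersections, and your case split is exactly this dichotomy.
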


The analysis of $P_{\pm \infty}$ is postponed to Section \ref{subsec-nueva}. Additionally, there is another point missing in the discussion before. In order for the subresultant chain of $\xi_1,\xi_2$ not to vanish completely, we must require that $\xi_1,\xi_2$ do not share any factor depending on $t$. We identify the cases when this happens in the following two results. The proofs of these results are given in Appendix I. 

\begin{lemma} \label{t-factor}
The polynomials $\xi_1(x,t)$ and $\xi_2(x,y,t)$ have a common factor $t-t_0$ iff $t_0$ corresponds to a base point of ${\bf x}$, lying on ${\mathcal G}$. 
\end{lemma}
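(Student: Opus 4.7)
The plan is to exploit the reduced form in Eq. \eqref{ofx}, in which both $A_i(t,s)$ and $B_i(t,s)$ are affine in $s$, so that the two defining resultants admit compact closed-form expressions. For the easy direction ($\Leftarrow$), if $(t_0,s_0)\in\mathcal{G}$ is a base point of $\bfx$, then $s_0$ is a simultaneous $s$-root of $g(t_0,s)$, $xB_1(t_0,s)-A_1(t_0,s)$, and $yB_2(t_0,s)-A_2(t_0,s)$ independently of $x,y$. Both defining resultants therefore vanish at $t=t_0$ identically in the remaining variables, so $(t-t_0)$ divides each resultant and hence divides the square-free parts $\xi_1$ and $\xi_2$.

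For the converse ($\Rightarrow$), I would compute the two resultants explicitly. The product formula for the resultant with $g=s^2-p$ gives
\[
\mbox{Res}_s(xB_1-A_1,\, g) \;=\; \bigl[xB_1(t,s_+)-A_1(t,s_+)\bigr]\bigl[xB_1(t,s_-)-A_1(t,s_-)\bigr],
\]
where $s_\pm(t)$ are the two formal roots of $s^2-p(t)$; this is quadratic in $x$. Since $A_2,B_2$ are also affine in $s$, the Sylvester resultant is a $2\times 2$ determinant
\[
\mbox{Res}_s(xB_1-A_1,\, yB_2-A_2) \;=\; (xb_{11}-a_{11})(yb_{22}-a_{22})-(xb_{12}-a_{12})(yb_{21}-a_{21}),
\]
a bilinear form in $(x,y)$ whose four coefficients are precisely the $2\times 2$ minors of the $4\times 2$ matrix $M(t)$ whose rows are $(a_{11},a_{12})$, $(b_{11},b_{12})$, $(a_{21},a_{22})$, $(b_{21},b_{22})$.

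Assuming $(t-t_0)$ divides both $\xi_1$ and $\xi_2$, the first divisibility forces the quadratic above to vanish identically in $x$ at $t_0$; since $\RR[x]$ is a domain, one of its two linear-in-$x$ factors must be the zero polynomial, yielding some $\varepsilon\in\{+,-\}$ with $A_1(t_0,s_\varepsilon)=B_1(t_0,s_\varepsilon)=0$ and $s_\varepsilon^2=p(t_0)$. The second divisibility forces all four $2\times 2$ minors of $M(t_0)$ to vanish, i.e., $\mathrm{rank}\,M(t_0)\le 1$, from which the four linear polynomials $A_1(t_0,s),B_1(t_0,s),A_2(t_0,s),B_2(t_0,s)$ share a common $s$-root $s^\ast$; matching $s^\ast$ with the unique root of $A_1(t_0,s)$ or $B_1(t_0,s)$ yields $s^\ast=s_\varepsilon$, so $(t_0,s^\ast)\in\mathcal{G}$ and all four numerators and denominators vanish there, i.e., $(t_0,s^\ast)$ is a base point of $\bfx$ on $\mathcal{G}$. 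The main obstacle I expect is sorting through the degenerate sub-cases where some of the coefficients $a_{i2}(t_0),b_{i2}(t_0)$ vanish or one column of $M(t_0)$ is zero; these are to be dispatched using the standing coprimality assumptions $\gcd(a_{11},a_{12},b_{11},b_{12})=\gcd(a_{21},a_{22},b_{21},b_{22})=1$, which preclude an entire pair $(A_i(t_0,\cdot),B_i(t_0,\cdot))$ from being a nonzero constant in $s$ and therefore force the identification $s^\ast=s_\varepsilon$ in every sub-case.
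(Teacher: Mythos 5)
Your argument is essentially correct and lands in the same place as the paper's proof, but the forward direction is organized differently. The paper works directly from the explicit coefficient formulas for $\mbox{Res}_s(h,g)$ and $\mbox{Res}_s(h,j)$ (Eqs.~\eqref{resxg} and \eqref{resxy}) and then appeals to the standard specialization property of resultants: since $\mathrm{lc}_s(g)=1$, vanishing at $t=t_0$ identically in $x$ (resp.~$x,y$) forces either a common $s$-root or the simultaneous vanishing of the $s$-leading coefficients, and the coprimality hypotheses kill the latter alternative. You instead package the $\xi_1$ condition via the product formula $\mbox{Res}_s(h,g)=h(t,s_+)\,h(t,s_-)$, which cleanly produces a distinguished branch $s_\varepsilon$ on which $A_1,B_1$ vanish, and you recast the $\xi_2$ condition as a rank condition on the $4\times 2$ coefficient matrix $M(t_0)$. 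This is a tidy reorganization, and the rank language makes the geometric content of ``common root of all four linear forms'' transparent.

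One point you should repair, because as written it is an invalid inference rather than a mere notational slip: the four bilinear coefficients of $\mbox{Res}_s(h,j)$ are only four of the $\binom{4}{2}=6$ order-two minors of $M(t_0)$ --- the ``cross'' minors pairing one of rows $\{1,2\}$ with one of rows $\{3,4\}$. The two ``intra-block'' minors $a_{11}b_{12}-a_{12}b_{11}$ and $a_{21}b_{22}-a_{22}b_{21}$ do not occur. So ``the four coefficients vanish, i.e.\ $\mathrm{rank}\,M(t_0)\le 1$'' is false in general: a matrix whose lower block is zero has all four cross minors zero but can still have rank $2$. The inference \emph{is} correct here, but only because the coprimality hypotheses guarantee that neither the block $(r_1,r_2)$ nor the block $(r_3,r_4)$ is entirely zero at $t_0$; one then picks a nonzero row, uses the four cross-minor conditions to show all other rows are proportional to it, and concludes rank $\le 1$. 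That coprimality input belongs in the main line of the argument, not only in the ``degenerate sub-cases'' you defer at the end. Once rank $\le 1$ is established you still need to check that the common root $s^\ast$ actually satisfies $g(t_0,s^\ast)=0$; matching $s^\ast$ with the branch $s_\varepsilon$ produced by the $\xi_1$ step (again using coprimality to exclude the rows of the first block being constant in $s$) closes this, exactly as the paper's proof does by showing $s_1=s_0$. Finally, the sub-case $\deg_s h=0$ (when $x(t,s)$ is independent of $s$) has $\xi_1$ of a different shape, cf.\ Eq.~\eqref{resxg2}; there coprimality shows $t-t_0\nmid\xi_1$, so the sub-case cannot occur under your hypothesis and can be dismissed outright.
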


\begin{lemma} \label{tx-factor}
The polynomials $\xi_1(x,t)$ and $\xi_2(x,y,t)$ have a common factor $\eta(x,t)$ depending on both $x,t$ iff $x(t,s)$ depends only on $t$.
\end{lemma}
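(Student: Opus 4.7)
My plan is to work with the explicit expressions for $\xi_1$ and $\xi_2$ obtained by applying standard resultant formulas to ${\bf x}|_{\mathcal G}$ in the form \eqref{ofx}. Writing $\mbox{num}(x-x(t,s))=N_0(x,t)+s N_1(x,t)$ with $N_0=xb_{11}-a_{11}$, $N_1=xb_{12}-a_{12}$, and $\mbox{num}(y-y(t,s))=M_0(y,t)+s M_1(y,t)$ analogously, the formulas $\mbox{Res}_s(\alpha s+\beta,\, s^2-p)=\beta^2-\alpha^2 p$ and $\mbox{Res}_s(\alpha s+\beta,\,\gamma s+\delta)=\alpha\delta-\beta\gamma$ yield, before taking square-free parts,
\[
\mbox{Res}_s(\mbox{num}(x-x(t,s)),g) = N_0^2 - p\,N_1^2,\qquad \mbox{Res}_s(\mbox{num}(x-x(t,s)),\mbox{num}(y-y(t,s))) = N_1 M_0 - N_0 M_1.
\]

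For the $(\Leftarrow)$ direction, suppose $x(t,s)$ depends only on $t$ and write it in lowest terms as $\phi(t)/\psi(t)$. Then $\mbox{num}(x-x(t,s))=x\psi(t)-\phi(t)$ has degree $0$ in $s$, so both resultants above become positive powers of $x\psi-\phi$. After taking square-free parts, $\xi_1$ and $\xi_2$ therefore share the factor $x\psi(t)-\phi(t)$; since ${\bf x}|_{\mathcal G}$ is birational, $\phi/\psi$ is non-constant, so this common factor genuinely depends on both $x$ and $t$.

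For the $(\Rightarrow)$ direction, assume $\eta(x,t)$ is a common factor of $\xi_1,\xi_2$ depending on both variables. Since $\xi_2$ is linear in $x$, $\eta$ must itself be linear in $x$. Passing to the polynomial ring ${\mathbb R}(t)[x]$, $\eta$ provides a linear $x$-factor of the degree-$2$ polynomial $N_0^2-p N_1^2$, forcing this polynomial to split over ${\mathbb R}(t)[x]$. A direct expansion gives the key identity
\[
\mbox{disc}_x(N_0^2-p N_1^2) \;=\; 4\,p(t)\,(a_{11}b_{12}-a_{12}b_{11})^2.
\]
Because $p(t)$ is square-free of degree $\geq 1$ (Weierstrass form), $p$ is not a square in ${\mathbb R}(t)$; hence this discriminant is a square in ${\mathbb R}(t)$ if and only if $a_{11}b_{12}-a_{12}b_{11}=0$. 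This vanishing is precisely the condition that the rational function $(a_{11}+s a_{12})/(b_{11}+s b_{12})$ collapses to $a_{11}/b_{11}$, i.e.\ that $x(t,s)$ depends only on $t$.

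The main obstacle is the discriminant identity: its proof is a routine expansion, but the cleanness of the factor $(a_{11}b_{12}-a_{12}b_{11})^2$, combined with the square-freeness of $p(t)$, is exactly what ties irreducibility of $\xi_1$ over ${\mathbb R}(t)[x]$ to the essential $s$-dependence of $x|_{\mathcal G}$. One minor point to verify is that $\eta\mid\xi_1$ in ${\mathbb R}[x,t]$ lifts to a factorization in ${\mathbb R}(t)[x]$, which is immediate after clearing $t$-content.
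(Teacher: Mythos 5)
Your proof is correct, and the $(\Rightarrow)$ direction takes a genuinely different — and cleaner — route than the paper's. Both you and the paper first reduce $\eta$ to a linear polynomial in $x$ by using that $\xi_2$ (the square-free part of $N_1 M_0 - N_0 M_1$) has degree at most one in $x$, so $\eta$ must depend on $x$ linearly. From there, however, the paper stays with $\xi_2$: it writes $\xi_2 = \eta\cdot\gamma(t,y)$, splits into cases $\deg_y\gamma = 1$ and $\deg_y\gamma = 0$, and rules each one out by an argument about leading coefficients of $h,j$ with respect to $s$ and about $x(t,s)$ having infinitely many base points. You instead switch attention to $\xi_1$: a linear $x$-factor of the quadratic $N_0^2 - p N_1^2$ forces its $x$-discriminant to be a square in $\RR(t)$, you compute that discriminant in closed form as $4\,p(t)\,(a_{11}b_{12}-a_{12}b_{11})^2$, and then square-freeness of $p$ (degree $\geq 3$ in the Weierstrass form, so certainly not a perfect square) forces $a_{11}b_{12}-a_{12}b_{11}\equiv 0$, which is exactly the proportionality of $(a_{11},a_{12})$ and $(b_{11},b_{12})$ that makes $x(t,s)$ collapse to a function of $t$ alone. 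Your route avoids the casework on $\gamma$ and the base-point argument entirely, replaces them by a single algebraic identity, and makes visible the precise structural reason the lemma holds: irreducibility of $\xi_1$ over $\RR(t)[x]$ is governed by whether $p$ is a square. One small phrasing slip: the containment $\RR[t,x]\subset\RR(t)[x]$ means a factorization in $\RR[t,x]$ automatically gives one in $\RR(t)[x]$, so no ``lifting'' is needed in the direction you use; Gauss's lemma would only be needed for the converse, which you do not invoke. The $(\Leftarrow)$ direction matches the paper's.
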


In the case of Lemma \ref{t-factor}, if ${\bf x}$ has some base point lying on ${\mathcal G}$ we remove the common factor depending on $t$, and perform the procedure presented before. In the case of Lemma \ref{tx-factor}, we replace $\xi_2(x,y,t)$ by 
\[\tilde{\xi}_2(y,t)=\mbox{square-free part of }\mbox{Res}_s(\mbox{num}(y-y(t,s)),g(t,s)),\]
and proceed as before.

\subsection{Behavior of ${\mathcal C}$ around the base points of ${\bf x}|_{\mathcal G}$.}\label{analysis-base-points}

Let $Q=(t_0,s_0)\in\mathcal{G}$ be a base point of ${\bf x}|_{\mathcal G}$. Notice that by Lemma \ref{t-factor}, $t=t_0$ must be a root of the content of $\xi_1,\xi_2$ with respect to $t$, and therefore has been previously determined. In this case, ${\bf x}(t_0,s_0)=\left(\frac{0}{0},\frac{0}{0}\right)$. Although the fact that the ${\mathcal G}$ does not have affine singularities guarantees that ${\bf x}(t_0,s_0)$ is defined as a projective point (see Theorem 1.2 of \cite{Shafarevich}), we still need to determine the behavior of ${\bf x}$ when the point $(t_0,s_0)$ is approached; in particular, we need to check if we get an affine point or a point at infinity, in which case we get an infinite branch of ${\mathcal C}$. In order to do this, we distinguish two situations: 

\begin{itemize}
\item [(i)] {\it The point $(t_0,s_0)$ is not a critical point of ${\mathcal G}$:} in this case, by the Implicit Function Theorem $s^2-p(t)=0$ implicitly defines $s=s(t)$ at $t=t_0$. In fact, we can easily find the Taylor expansion of the function $s(t)$ at $t=t_0$, and then study the limits
\[
\mbox{lim}_{t\to t_0}x(t,s(t)),\, \mbox{lim}_{t\to t_0}y(t,s(t)).
\]
If both limits are finite, then $(t_0,s_0)$ generates an affine point of ${\mathcal C}$. Otherwise we have a branch going to infinity, which is an asymptote of ${\mathcal C}$ whenever one of the above limits is finite. 
\item [(ii)] {\it The point $(t_0,s_0)$ is a critical point of ${\mathcal G}$:} in this case $t_0$ is a root of $p(t)$, so $s_0=0$. Now we consider $s=\pm \sqrt{p(t)}$ and we study each branch $s=\sqrt{p(t)}$ and $s=-\sqrt{p(t)}$ separately. We address in more detail the case $s=\sqrt{p(t)}$; for $s=-\sqrt{p(t)}$ the analysis is similar. Now if $s=\sqrt{p(t)}$, for the component $x(t,s)$ we have 
\[
x\left(t,\sqrt{p(t)}\right)=\frac{a_{11}(t)+\sqrt{p(t)}a_{12}(t)}{b_{11}(t)+\sqrt{p(t)}b_{12}(t)}.
\]
We are interested in analyzing the behavior of this function when $t\to t_0$. Since $(t_0,0)$ is a base point of $x(t,s)$, $a_{11}(t_0)=b_{11}(t_0)=0$. Additionally, since $a_{11}(t)$, $a_{12}(t)$, $b_{11}(t)$, $b_{12}(t)$ are relatively prime, it cannot be $a_{12}(t_0)=0$ and $b_{12}(t_0)=0$ simultaneously. Furthermore, $t=t_0$ is a root of $p(t)$, and since $p(t)$ does not have multiple roots, the multiplicity of $t_0$ is 1. Hence we can factor out $(t-t_0)^{1/2}$ in the numerator and denominator of $x(t,\sqrt{p(t)})$, and we get 
\[
x\left(t,\sqrt{p(t)}\right)=\frac{\tilde{a}_{11}(t)+\sqrt{\tilde{p}(t)}a_{12}(t)}{\tilde{b}_{11}(t)+\sqrt{\tilde{p}(t)}b_{12}(t)},
\]
where $\tilde{a}_{11}(t)=\dfrac{a_{11}(t)}{(t-t_0)^{1/2}}$, $\tilde{b}_{11}(t)=\dfrac{b_{11}(t)}{(t-t_0)^{1/2}}$, and $\tilde{p}(t)=\dfrac{p(t)}{t-t_0}$. 

Observe that since $a_{11}(t_0)=b_{11}(t_0)=0$ and $a_{11}(t),b_{11}(t)$ are polynomials, $\tilde{a}_{11}(t_0)=\tilde{b}_{11}(t_0)=0$. Therefore, when $t\to t_0$ the limit of the function $x(t,\sqrt{p(t)})$ is equal to the limit of $a_{12}(t)/b_{12}(t)$ when $t\to t_0$. Since not both $a_{12}(t_0),b_{12}(t_0)$ are zero, the limit is defined whenever $b_{12}(t_0)\neq 0$, and is infinite (in which case we have a branch at infinity) whenever $b_{12}(t_0)=0$. Similarly for the component $y(t,s)$, and for $s=-\sqrt{p(t)}$.   
\end{itemize}

Notice that these ideas can be also used at points $(t_0,s_0)$ where only one
component of ${\bf x}|_{\mathcal G}(t,s)$ is undefined.

\subsection{Computation and study of $P_{\pm \infty}$.}\label{subsec-nueva}

The point at infinity of the curve ${\mathcal G}$ is the center of two {\it places}, i.e. two branches of ${\mathcal G}$. In turn, these two branches generate two branches of ${\mathcal C}$ via ${\bf x}$, which can be centered at affine points or points at infinity denoted by $P_{\pm \infty}$. In order to compute whether or not the $P_{\pm \infty}$ are affine, we must study the (four) limits
\begin{equation}\label{limits}
\begin{array}{cc}
\mbox { }\mbox{lim}_{t\to \pm\infty} {\bf x}\left(t,\sqrt{p(t)}\right), & \mbox { }\mbox{lim}_{t\to \pm\infty} {\bf x}\left(t,-\sqrt{p(t)}\right).
\end{array}
\end{equation}
Notice that we can have at most two different finite values in these limits, corresponding to the case when all $P_{\pm \infty}$ are affine. In order to compute these limits, after performing elementary calculations we arrive to an expression $\frac{\mu_1(t)}{\mu_2(t)}$ where one of the $\mu_i(t)$ is a polynomial, and the other $\mu_i(t)$ involves polynomials and one radical term. Then the limit can be evaluated by just comparing the degrees of the numerator and the denominator; notice that the degree can be a non-integer, rational number in the case of the numerator or denominator involving a square-root. In our experimentation we have checked that a computer algebra system like Maple 18 perfectly computes these limits in almost no time.

It can happen that all $P_{\pm \infty}$, only some of them, or none of them, is affine. If all $P_{\pm \infty}$ are affine and equal, then $P_{\pm \infty}$ is a self-intersection of ${\mathcal C}$. In this case, if the branches at infinity of ${\mathcal G}$ are real, then there are at least two real branches of ${\mathcal C}$ passing through $P_{\pm \infty}$; if the branches are complex and $P_{\pm \infty}$ is real, then $P_{\pm \infty}$ is an isolated point of ${\mathcal C}$. If some $P_{\pm \infty}$ is affine, it can also be a self-intersection of ${\mathcal C}$ when there exists an affine point of ${\mathcal G}$ whose image under ${\bf x}(t,s)$ coincides with this $P_{\pm \infty}$. This can be checked by solving the bivariate system $\{{\bf x}(t,s)=P_{\pm \infty},\mbox{ }g(t,s)=0\}$.

Additionally, when some of the $P_{\pm \infty}$ are affine, we can check whether they are local singularities by checking whether the limit for $t\to \pm\infty$ of the derivative of ${\bf x}(t,\pm \sqrt{p(t)})$ vanishes.

\subsection{Construction of $G_{\mathcal C}$.} \label{top-final}

Let $Q_1=(t_1,s_1),\ldots,Q_r=(t_r,s_r)$ be the points of ${\mathcal G}$ computed in (i)-(iv). Since the $Q_i$ belong to ${\mathcal G}$ and the graph associated with ${\mathcal G}$ can be computed by means of well-known methods \cite{Berb, CLPPRT09, EKW07, GVN}, we know how to connect the $Q_i$ to each other. Furthermore, from the preceding sections the behavior of ${\bf x}$ around the $Q_i$ is clear. Now the vertices of $G_{\mathcal C}$ are the images $P_i={\bf x}(Q_i)$, whenever ${\bf x}(Q_i)$ (or the limit of ${\bf x}(t,s)$ as $(t,s)\to Q_i$, in the case of base points) is defined, and we connect two of these vertices iff their preimages $Q_i$ are connected to each other in $G_{\mathcal G}$. Furthermore, we also include as vertices of $G_{\mathcal C}$ the points $P_{\pm \infty}\in{\mathcal C}$ coming from the point at infinity of ${\mathcal G}$, in case they are affine. 

Additionally, the graph associated with ${\mathcal G}$ can have open edges (representing branches tending to infinity), corresponding to the edges of ${\mathcal G}$ with some vertex where some component of ${\bf x}$ becomes infinite, or branches of ${\mathcal G}$ tending to infinity, in the case when some $P_{\pm \infty}$ is at infinity. Also, we must check that the edges of the graph associated with ${\mathcal C}$ do not intersect except at the self-intersections of ${\mathcal C}$. This amounts to computing the intersection of two segments, which is straightforward and negligible in terms of computation time. If two edges of the graph associated with ${\mathcal C}$ intersect at a point which is not a self-intersection of ${\mathcal C}$ (notice that the computation of the self-intersections of ${\mathcal C}$ was addressed in the previous subsections) we just deform slightly one of the edges, or introduce additional vertices so that the intersection is avoided.

\begin{theorem} \label{all-the-branches}
Let $G_{\mathcal C}$ be the graph associated with ${\mathcal C}$ according to the description in the preceding subsections. Then $G_{\mathcal C}$ and ${\mathcal C}$ are isotopic. 
\end{theorem}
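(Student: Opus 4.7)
The plan is to show that $G_{\mathcal C}$ and ${\mathcal C}$ coincide as combinatorially embedded planar graphs (same vertices, same edges, same cyclic order of edges at each vertex) and then invoke the classical consequence of the Jordan--Sch\"onflies theorem that two planar graphs with the same combinatorial embedding in ${\Bbb R}^2$ are ambient isotopic. I would carry out the verification in three steps corresponding to (a) edges, (b) global coverage of ${\mathcal C}$, and (c) local structure at vertices.

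First, I would argue that each edge $e$ of $G_{\mathcal G}$ maps under ${\bf x}$ to a simple (possibly unbounded) arc $\alpha_e\subset{\mathcal C}$. The vertex set of $G_{\mathcal G}$ is designed to include every point where ${\bf x}|_{\mathcal G}$ can fail to be defined, continuous or injective: critical points of $g$ (type (i)), preimages of critical points of ${\mathcal C}$ — which, by Theorem~\ref{all-the-sings}, include all preimages of self-intersections (type (ii)) — indeterminacy points of components of ${\bf x}$ (type (iii)), and endpoints of open branches of ${\mathcal G}$ (type (iv)). Hence on the interior of $\bar e$ the map ${\bf x}$ is continuous; and it is injective there, for otherwise one would obtain a self-intersection of ${\mathcal C}$ whose preimages in $\mathcal{G}$ were missed, contradicting Theorem~\ref{all-the-sings} combined with the base-point analysis of Subsection~\ref{analysis-base-points}. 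By compactness, ${\bf x}$ extends to a homeomorphism from $\bar e$ onto $\alpha_e$ (with the convention that an endpoint at which a component of ${\bf x}$ diverges maps to a point at infinity of ${\mathcal C}$, producing an unbounded edge).

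Second, I would show that the family $\{\alpha_e\}$ together with the vertices $P_i$ and the possibly affine points $P_{\pm\infty}$ covers all of ${\mathcal C}$. A real point of ${\mathcal C}$ that is neither a vertex $P_i$ nor $P_{\pm\infty}$ must lie in ${\bf x}(\mathcal{G}_{\text{aff}})$: birationality of ${\bf x}|_{\mathcal G}$ forbids a real point of ${\mathcal C}$ from arising from a complex branch of ${\mathcal G}$ (by the argument of Section~\ref{sec-prelim}), while Subsection~\ref{subsec-nueva} accounts exhaustively for the two places of ${\mathcal G}$ at infinity. Hence ${\mathcal C}\setminus\{P_i\}\cup\{P_{\pm\infty}\}$ is the disjoint union of the interiors $\alpha_e^{\circ}$, and the assignment $e\mapsto\alpha_e$ is the required bijection between edges of $G_{\mathcal G}$ and arcs of ${\mathcal C}$.

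Finally, I would check the local combinatorial match at each vertex $P$ of $G_{\mathcal C}$. At a vertex $P={\bf x}(Q)$ with a single preimage $Q\in\mathcal{G}$, ${\bf x}$ is a local homeomorphism around $Q$, so it transports the cyclic order of edges of $G_{\mathcal G}$ at $Q$ to the cyclic order of edges of $G_{\mathcal C}$ at $P$. At a self-intersection $P$ with preimages $Q_1,\dots,Q_k$ the local branches of ${\mathcal C}$ at $P$ are the union of the local images at each $Q_m$, and the construction of Subsection~\ref{top-final} records exactly these incidences; the final straightening/perturbation step guarantees that the drawn edges do not introduce extra intersections and therefore realise the correct cyclic arrangement. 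Combined with the edge-by-edge homeomorphisms of step~(a), this yields an ambient isotopy of ${\Bbb R}^2$ carrying $\bigsqcup_e\alpha_e\cup\{P_i,P_{\pm\infty}\}={\mathcal C}$ onto $G_{\mathcal C}$. The main obstacle I expect is the cyclic-order verification at self-intersections whose preimages $Q_m$ are unrelated by the local behaviour of ${\bf x}$; a careful examination of the tangent directions through Eq.~\eqref{tangent-space}, together with the convention that slight perturbations are introduced whenever a straight-line drawing would induce a spurious crossing, is what ultimately closes this gap.
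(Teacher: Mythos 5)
Your approach is the same in substance as the paper's --- decompose ${\mathcal G}$ into arcs at the chosen vertices, use birationality of ${\bf x}|_{\mathcal G}$ to get a one-to-one correspondence with arcs covering $\mathcal C$, and then argue that the straight-line drawing $G_{\mathcal C}$ is ambient isotopic to $\mathcal C$ --- but you execute it more carefully and recast the conclusion in the language of combinatorial embeddings and Jordan--Sch\"onflies. The paper's own proof is much terser: it observes that $\mathcal G$ segments into arcs $\ell_i$ on which ${\bf x}$ is continuous, that the ${\bf x}(\ell_i)$ cover $\mathcal C$ bijectively, and that ``deforming each ${\bf x}(\ell_i)$ into a segment'' yields the isotopy provided no self-intersection of $\mathcal C$ is missed (guaranteed because, by Theorem~\ref{all-the-sings}, every self-intersection preimage is a vertex) and no spurious crossing is introduced (enforced by the check-and-perturb step of Subsection~\ref{top-final}). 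Your steps (a) and (b) are a more rigorous version of this first half. Where you go beyond the paper is step (c): you explicitly raise the cyclic order of branches at each self-intersection as a necessary ingredient for the isotopy. The paper never discusses rotation systems; it silently absorbs this into the perturbation step. You correctly identify this as the weak point, but your own argument for closing it --- tangent directions via Eq.~\eqref{tangent-space} plus perturbation --- is sketched rather than proved, so your proposal exposes, but does not actually fill, the gap that the paper's proof also leaves implicit. Still, the combinatorial-embedding framing is the right way to make the statement precise, and your diagnosis of where the remaining work lies (cyclic order at self-intersections) is accurate and worth retaining.
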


\begin{proof} Once we compute the points of ${\mathcal G}$ where ${\bf x}$ becomes infinite, ${\mathcal G}$ is segmented into finitely many portions $\ell_1,\ldots,\ell_p$ where ${\bf x}$ is continuous. Each $\ell_i$ is connected, and by continuity ${\bf x}(\ell_i)$ is connected as well. Furthermore, by the birationality of ${\bf x}|_{\mathcal G}$ the correspondence between the $\ell_i$ and the ${\bf x}(\ell_i)$ is $1:1$. Since ${\mathcal C}={\bf x}({\mathcal G})$ and ${\bf x}({\mathcal C})$ coincides with the union of the ${\bf x}(\ell_i)$, we just need to show that the graph $G_{\mathcal C}$ is isotopic to the union of the ${\bf x}(\ell_i)$. Since in $G_{\mathcal C}$ we are just deforming each ${\bf x}(\ell_i)$ into a segment, in order to show that $G_{\mathcal C}$ and ${\mathcal C}$ are isotopic we just need to show that no self-intersections of ${\mathcal C}$ are missed, and that no other self-intersections are introduced. The former is guaranteed by construction, since in the process of computing $G_{\mathcal C}$ all the self-intersections of ${\mathcal C}$ are identified. The latter is guaranteed by checking that two edges do not intersect at a point which is not a self-intersection of ${\mathcal C}$. 
\end{proof}

\begin{example} Let 
$$g(t,s)=s^2+t^4-t^3-27t^2+25t+50=0,$$
and let 
\[
{\bf x}(t,s)=(x(t,s),y(t,s))=\left ({\dfrac {{t}^{4}-{t}^{3}+{t}^{2}+5\,s-t}{{t}^{6}+1}},{\dfrac {{t}^{4}+{
t}^{3}-{t}^{2}-5\,s+t}{{t}^{6}+1}}\right).
\]
The curve ${\mathcal C}={\bf x}({\mathcal G})$ is a hyperelliptic curve of genus one.

First we compute the real points $(t,s)\in {\mathcal G}$ generating the vertices of $G_\mathcal{C}$:
\begin{itemize}
\item [(i)] Critical points of $g(t,s)=0$, i.e. points $(t,0)$ with $p(t)=0$: $$Q_1=(-5,0) ,  Q_2=(-1,0) ,  Q_3=(2,0)  \text{ and }Q_4=(5,0).$$	
\item [(ii)] Points of ${\mathcal G}$ giving rise to critical points of ${\mathcal C}$.
Local singularities and ramification points are generated by the points $(t,s)$ solutions of the system
$$
g(t,s)=0, \quad x_tg_s-x_sg_t= 0.
$$
The real solutions (written only with two digits) are:
$$
Q_5=(- 4.98, - 2.05),
Q_6=(- 3.21, - 13.00),
Q_7=(- 1.16, - 3.47), 
$$
$$
Q_8=(- 1.12,  3.08),
Q_9=( 2.15,  3.11),
Q_{10}=( 2.24, - 3.97),$$ $$
Q_{11}=( 3.76, - 9.54),
Q_{12}=( 4.96, - 2.52).
$$

\bigskip

Now we compute the points of ${\mathcal G}$ giving rise to self-intersections of ${\mathcal C}$. We have:
\begin{eqnarray*}\xi_1(x,t)&=&\left( {t}^{12}+2\,{t}^{6}+1 \right) {x}^{2}+ \left( -2\,{t}^{10}+2\,
{t}^{9}+\cdots \right) x+{t}^{8}+\cdots +1250,
\end{eqnarray*}
and 
$$ \xi_2(x,y,t)= (t^6+1)(x+y)-2t^4.
$$

The self-intersections of ${\mathcal C}$ are generated by the real solutions of the system $\{ \mathbf{sres}_1(x(t,s),y(t,s))=  0,\quad g(t,s)=0\},$ which are
$Q_{13}=( -3.75,   -13.14)$, $Q_{14}=(  -2.32,   -10.61)$, $Q_{15}=( 2.32 ,  -4.62 )$   and  $Q_{16}=(  3.75 ,  9.53  ).$

The points $Q_{13}$ and $Q_{16}$ both generate the same point, $P_{13}$, and the points $Q_{14}$ and $Q_{15}$ both generate the point $P_{14}$ (see Figure \ref{detalles}).
\item [(iii)] Points of $\mathcal{G}$ where some component of ${\bf x}$ is not defined: there are neither base points nor vertical asymptotes.
\item [(iv)] Starting and ending points for open branches of $G$: There are not open branches. In particular, in this case we do not need to analyze the points $P_{\pm \infty}$, since they are either non-real, or real isolated points of ${\mathcal C}$, which we do not consider.
\end{itemize}
Finally, we compute the images $P_i ={\bf x}(Q_i)$, and we connect them according to how the $Q_i$ are connected in ${\mathcal G}$. The graph associated with ${\mathcal G}$ is shown in Fig. \ref{curvasej} (left). The graph associated with ${\mathcal C}$ is also shown in Fig. \ref{curvasej} (right). Additionally, in the graph associated there are several points very close to each other: some details on the topology of ${\mathcal C}$ are given in Fig. \ref{detalles}.

\begin{figure}[h]
$$
\begin{array}{cc}
\includegraphics[scale=0.3]{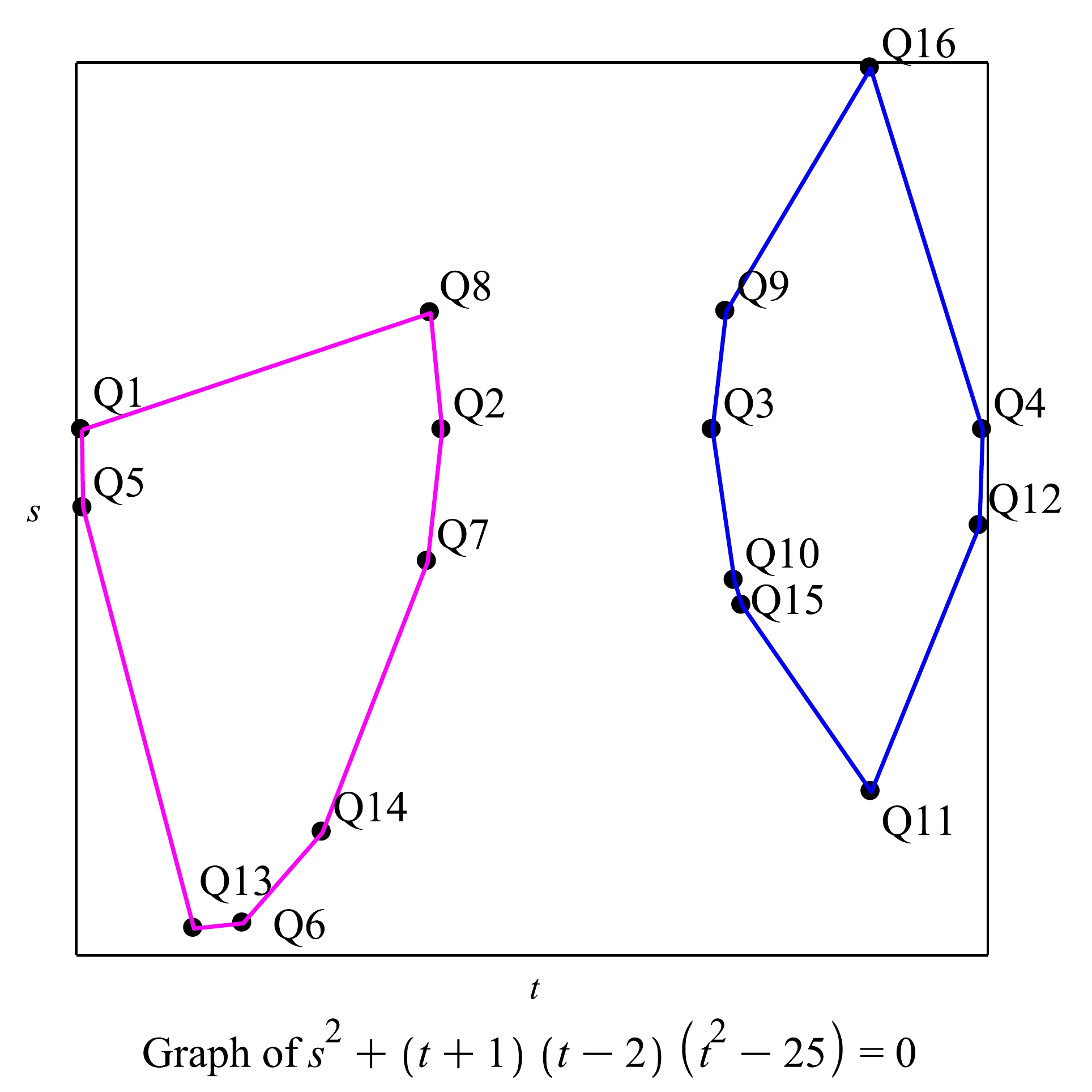}&
\includegraphics[scale=0.3]{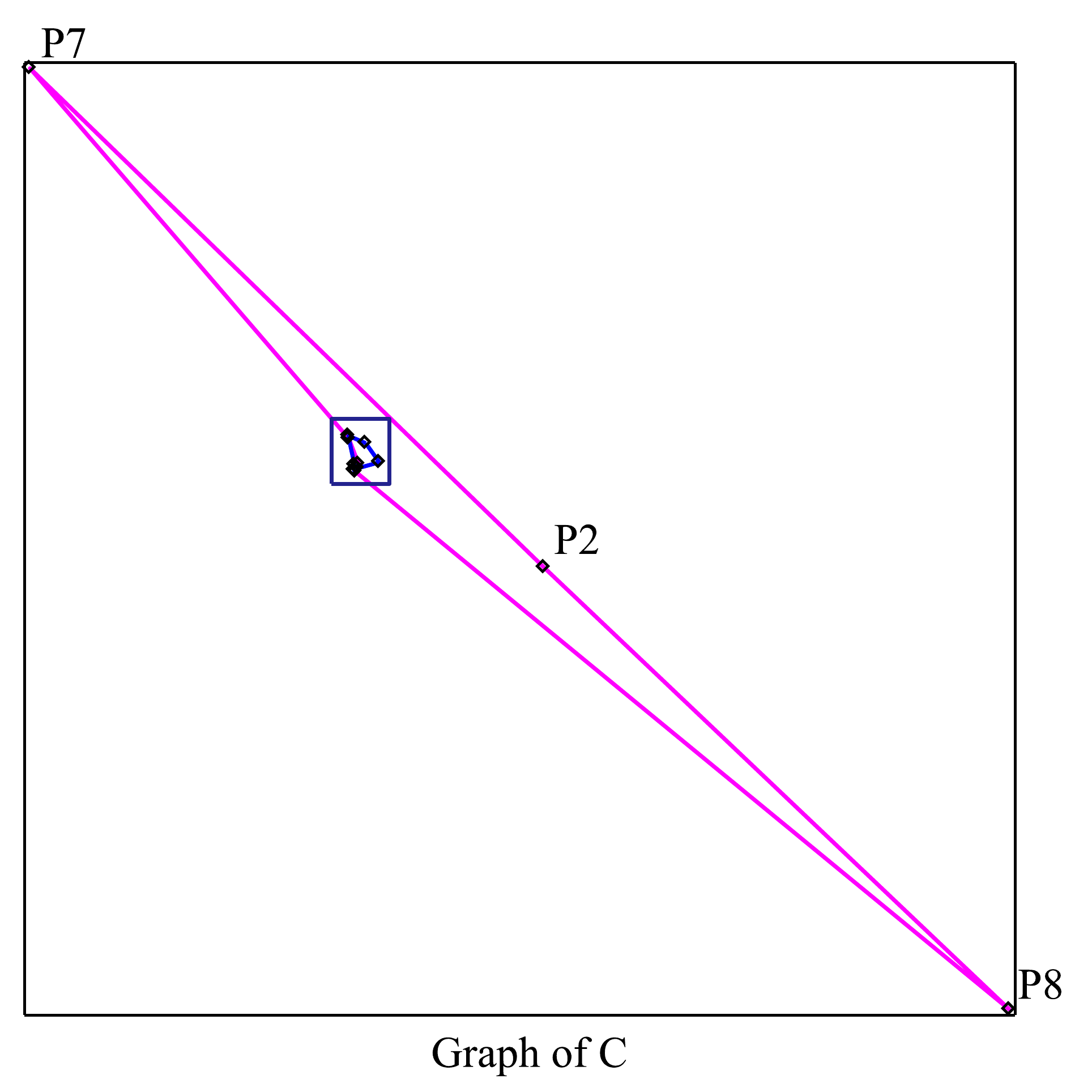}
\end{array}
$$
\caption{Correspondence between the edges of $G_{\mathcal G}$ and $G_{\mathcal C}$.}\label{curvasej}
\end{figure}
\begin{figure}[h]
$$
\begin{array}{cc}
\includegraphics[scale=0.25]{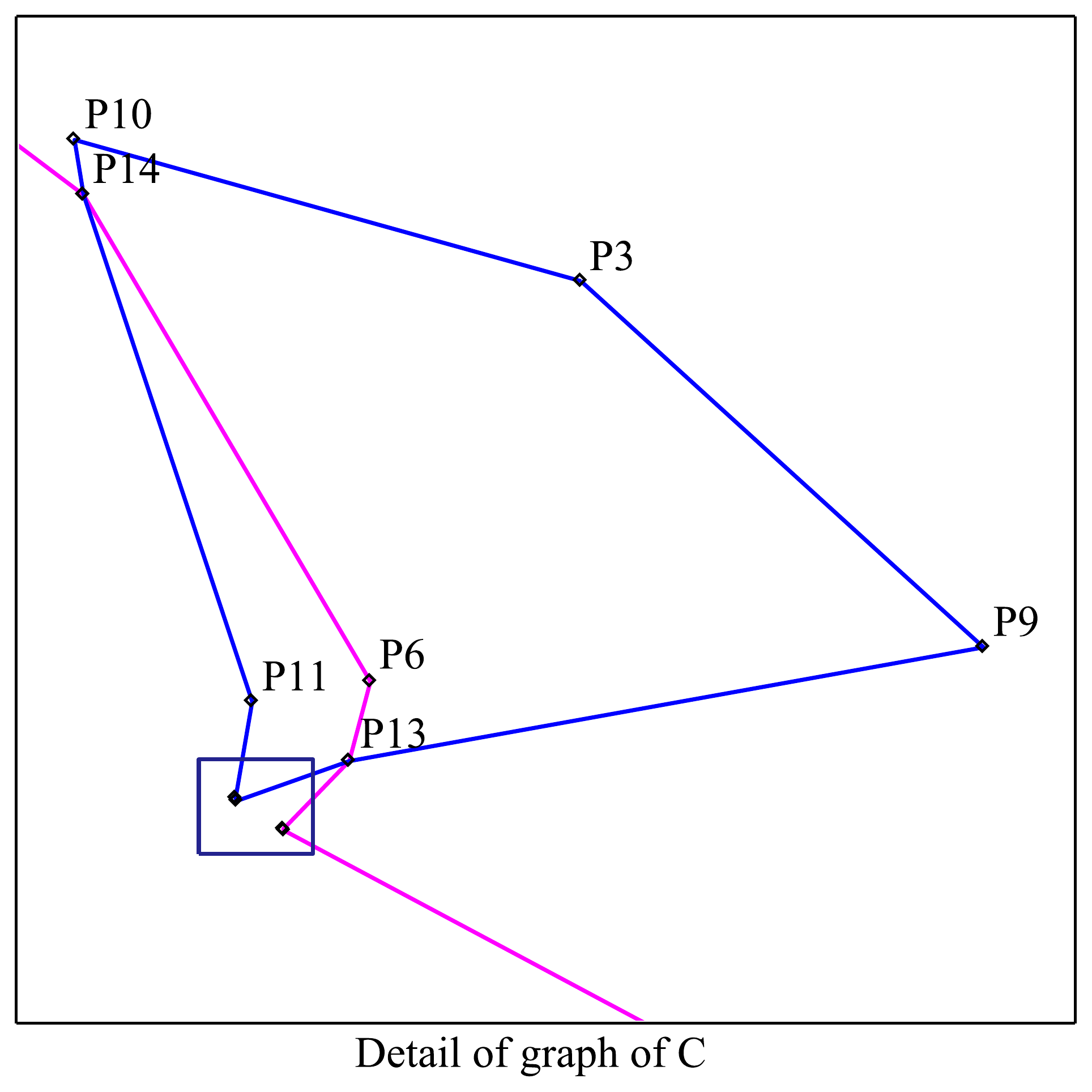}&
\includegraphics[scale=0.25]{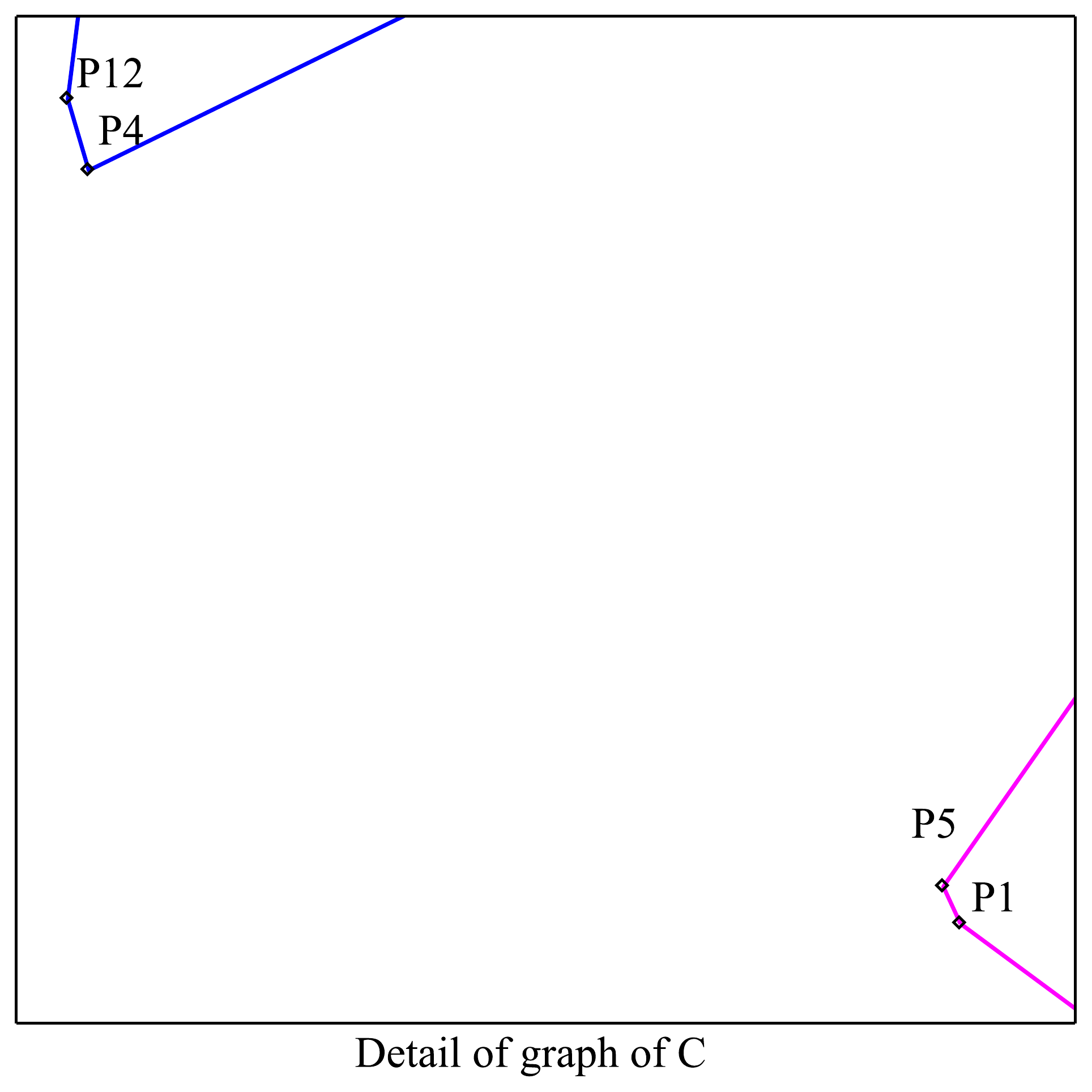}
\end{array}
$$
\caption{Details}\label{detalles}
\end{figure}

\end{example}

\section{The space case. }\label{sec-space}

Here we consider ${\bf x}:{\Bbb R}^2\to {\Bbb R}^3$, where 
\[
{\bf x}(t,s)=(x(t,s),y(t,s),z(t,s))=\left (\frac{A_1(t,s)}{B_1(t,s)},\frac{A_2(t,s)}{B_2(t,s)},\frac{A_3(t,s)}{B_3(t,s)}\right).
\]
We let ${\mathcal C}={\bf x}({\mathcal G})$, where ${\mathcal G}$ is defined by Eq. \eqref{ex-hyper}. In this case, the strategy requires to first birationally project ${\mathcal C}$ onto the $xy$-plane, compute the topology of the projection using the results in Section \ref{sec-planar}, and then lift this projection to get the topology of the curve ${\mathcal C}$. Fig. \ref{isoto} shows why we need to compute a planar projection in order to build a graph isotopic to ${\mathcal C}$. In Fig. \ref{isoto} we have two different curves which are not isotopic: the curve at the left is topologically equivalent to two \emph{entwined} circles; however, the curve at the right is topologically equivalent to two circles which are not entwined. While the projection onto the $xy$-plane of both curves is the same, the different topology in both cases appears when the projection is lifted to space. Notice that if we tried to compute directly the topology of ${\mathcal C}$ in this case using the information on ${\mathcal G}$, we might not be able to distinguish one situation from the other. However, the problem disappears if a projection is used. 

\begin{figure}
$$\begin{array}{c}
  \includegraphics[scale=0.5]{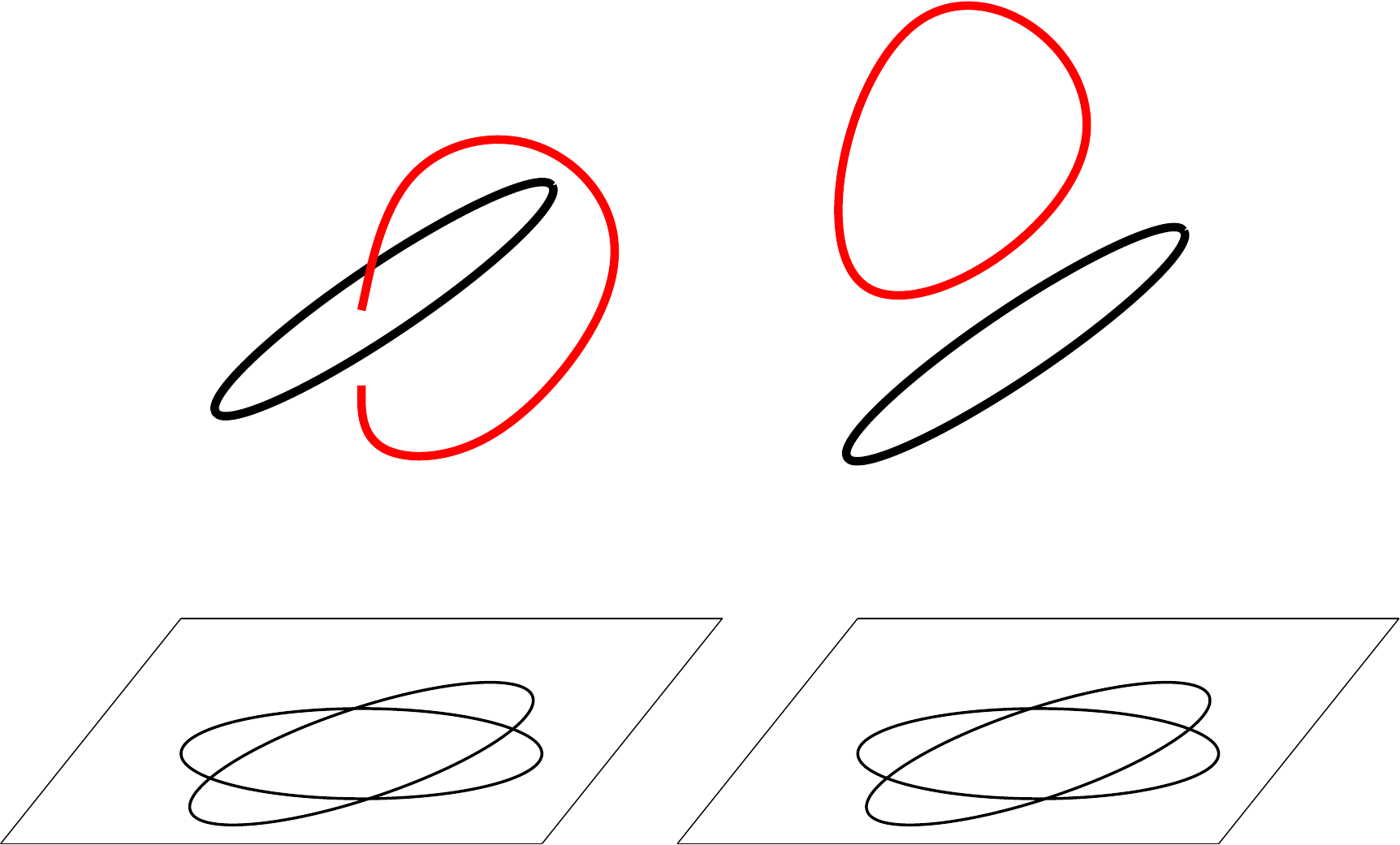}
 \end{array}$$
 \caption{In the space case we need a projection of the curve.}\label{isoto}
\end{figure}

Let ${\mathcal C}^{\star}=\pi_{xy}({\mathcal C})$, where $\pi_{xy}$ denotes the projection onto the $xy$-plane, and let $\tilde{\bf x}=\pi_{xy}\circ {\bf x}$. Fig. \ref{fig-relat} illustrates the relationship between ${\mathcal G}$, ${\mathcal C}$ and ${\mathcal C}^{\star}$. We need two hypotheses this time: 

\begin{itemize}
\item [(H1)] The restriction $\tilde{\bf x}|_{\mathcal G}$ is birational.
\item [(H2)] The curve ${\mathcal C}^{\star}$ does not have any asymptotes parallel to either the $y$-axis, or the $z$-axis. 
\end{itemize}

\noindent It is also customary, when computing the topology of a space curve ${\mathcal C}$, to require that ${\mathcal C}$ has no component parallel to the $z$-axis. However, in our case ${\mathcal C}$ is irreducible, i.e. ${\mathcal C}$ consists of only one component. If ${\mathcal C}$ reduces to a line parallel to the $z$-axis, then the only possibility is that both $x(t,s),y(t,s)$ are constant, which is a trivial case.

Hypothesis (H1) implies that ${\bf x}$ itself is birational when restricted to ${\mathcal G}$, and that $\pi_{xy}$ is also birational when restricted to ${\mathcal C}$; in turn, this means that there are not two different branches of ${\mathcal C}$ projecting as a same branch of ${\mathcal C}^{\star}$, and therefore that the branches of ${\mathcal C}$ are the result of lifting to space the branches of the projection ${\mathcal C}^{\star}=\pi_{xy}({\mathcal C})$. Hypothesis (H1) can be checked, as observed in Section \ref{sec-planar}, by taking a random point $(t_0,s_0)\in {\mathcal G}$ and determining the preimages of $\tilde{\bf x}(t_0,s_0)$. Hypothesis (H2) can be checked by testing whether or not $B_2(t,s)=g(t,s)=0$ has some solution where $A_2(t,s)\cdot B_1(t,s)\neq 0$, and whether or not $A_2(t,s)=g(t,s)=0$ has some solution where $A_1(t,s)\cdot B_2(t,s)\neq 0$. Both hypotheses, (H1) and (H2), guarantee that: (i) the topology of ${\mathcal C}^{\star}$ could be computed by applying the ideas in Section \ref{sec-planar}; (ii) the topology of ${\mathcal C}$ could be computed from the topology of ${\mathcal C}^{\star}$, by lifting a (planar) graph isotopic to ${\mathcal C}^{\star}$. In our case, however, we do not need to compute first the topology of ${\mathcal C}^{\star}$; instead, as in Section \ref{sec-planar}, we determine all the points $(t,s)\in {\mathcal G}$ giving rise to ``notable" points of ${\mathcal C}$, and incorporate those points as vertices of $G_{\mathcal G}$. Then the edges of $G_{\mathcal G}$ are mapped onto edges of $G_{\mathcal C}$ as we did in Section \ref{sec-planar}. 

Hypotheses (H1) and (H2) can always be achieved when ${\bf x}|_{\mathcal G}$ is birational. Indeed, under this assumption, for almost all random affine changes of coordinates $\phi$ and renaming ${\bf x}:={\bf x}\circ \phi$, $\pi_{xy}|_{\mathcal C}$ is birational, i.e. two different branches of ${\mathcal C}$ do not project as a same branch of ${\mathcal C}^{\star}$. As a consequence $\tilde{\bf x}|_{\mathcal G}$ must be birational. 

\begin{figure}
$$\begin{array}{c}
  \includegraphics[scale=0.6]{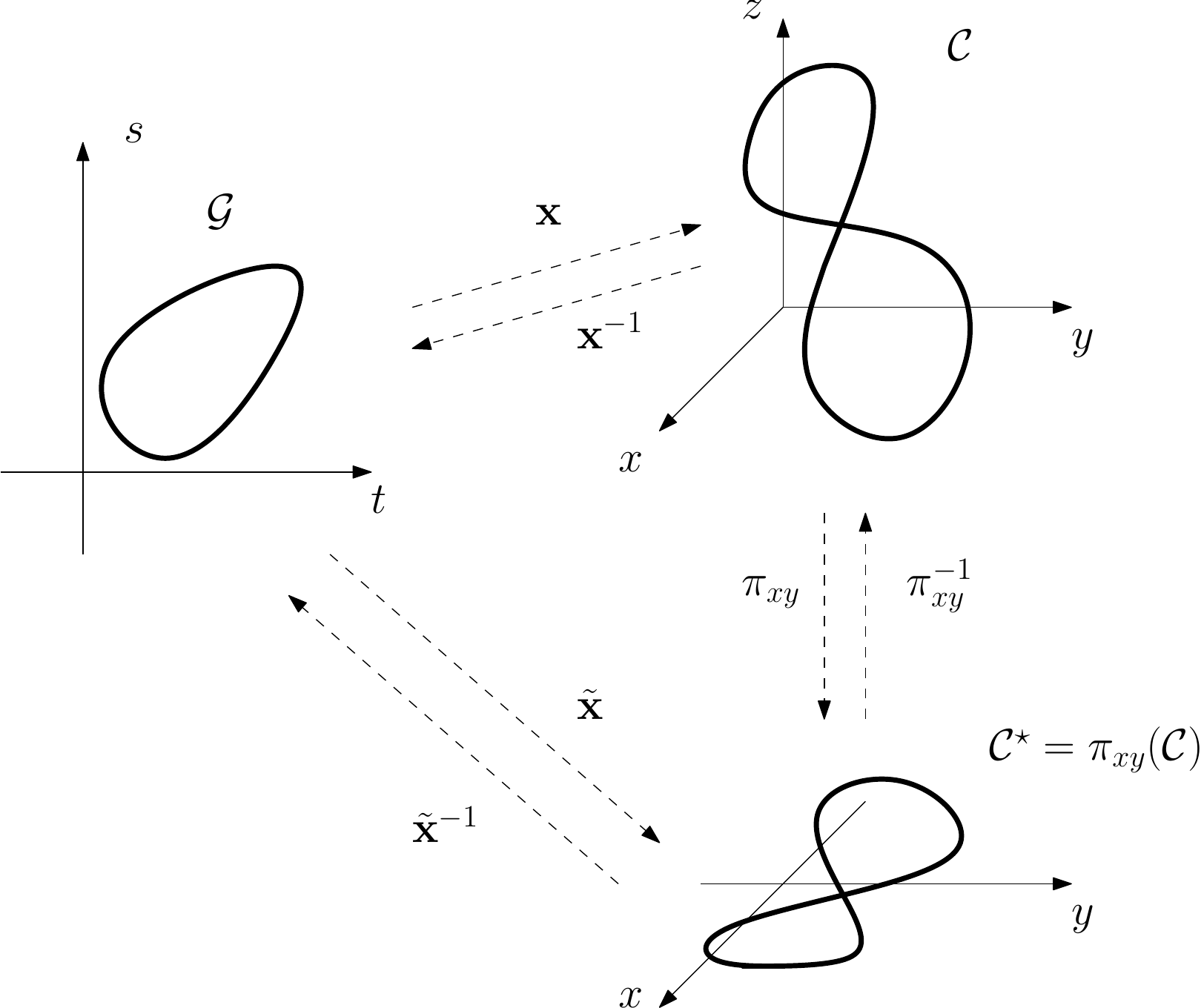}
 \end{array}$$
 \caption{Relationship between the curves ${\mathcal G}$, ${\mathcal C}$, and ${\mathcal C}^{\star}$.}\label{fig-relat}
\end{figure}

\vspace{0.3 cm}
In this case, we need to include the following points as vertices of $G_{\mathcal G}$:

\begin{itemize}
\item [(i)] \emph{Critical points of $g(t,s)=0$}, i.e. points of ${\mathcal G}$ where $g_s=0$. 
\item [(ii)] \emph{Points of ${\mathcal G}$ giving rise to critical points of ${\mathcal C}^{\star}$}.
\item [(iii)] \emph{Points of $\mathcal{G}$ where some component of ${\bf x}$ is not defined}.
\item [(iv)] \emph{Starting and ending points for open branches of ${\mathcal G}$}.
\end{itemize}

The points in (i), (ii), (iii) are computed as in Section \ref{sec-planar}; observe that the pairs $(t,s)$ generating singularities and points of ${\mathcal C}$ with tangent parallel to the $z$-axis are among the critical points of ${\mathcal C}^{\star}$ (see \cite{AS05, ADT10}). Once the points $Q_i=(t_i,s_i)$, $i=1,\ldots,r$ in (i)-(iv) are computed, we can find, whenever they are defined, the images $P_i={\bf x}(Q_i)$ or the limit points and proceed as in Section \ref{sec-planar} in order to connect the $Q_i$. 

\section{Experimentation.} \label{sec-exp}

In this section we report on the experimentation carried out in the case of both 2D and 3D curves. The algorithms have been implemented in \texttt{Maple 2017}, and the examples
run on an Intel Core i3 processor with speeds revving up to 3.06 GHz. 

Next, we first present examples of the 2D algorithm. In Table 1, we include for each curve, the genus, the total degree 
($d_i$) and the number of terms of the implicit equation (n.terms), the timings in seconds ($t_0$) taken by our algorithm, and the timings in seconds ($t_1$) corresponding to the algorithm in \cite {GVN}, also implemented in \texttt{Maple}, which uses the implicit equation of the curve. Additionally, in Table 1 we checkmark whether each example corresponds to a case where the points $P_{\pm \infty}$ are affine (the column $P_{\pm \infty}$ aff.), and whether the curve has self-intersections (S.I.). The last column provides some extra comments on the existence of base points or asymptotes. The parametrizations corresponding to these examples are given in Appendix II. The graphs corresponding to the examples in Table 1 are shown in Figure (\ref{2d}); from left to right, we have Examples 1, 2, 3 in the first row, 4, 5, 6 in the second row and 7, 8, 9 in the third row.

\vspace{0.4 cm}
\begin{center}
\hspace{-0.8 cm}\begin{tabular}{|c|c|l|c|c|c|c|c|c|} \hline
Example  &genus & $d_i$ & n.terms  &$P_{\pm \infty}$ aff. & S.I. &  $t_0$ & $t_1$ & Obs.   \\ 
\hline \hline 1 &0 & 10 &  57 &\checkmark & \checkmark &  0.310 &  0.270 & Asymptotes   \\ 
\hline 2   &1 & 14 & 81  &  & \checkmark &   0.625 & $^{*}$  & Asymptotes    \\  
\hline 3   & 2& 6 & 26  & \checkmark & \checkmark &  0.398 &  0.110  &  \\
\hline 4   &1 & 12 & 81  & \checkmark & \checkmark &  	 0.529 & $^{*}$ & Base points   \\  
\hline 5   &2 & 12 & 75  & \checkmark & \checkmark &   0.543 & $^{*}$   &  \\
\hline 6   &2 & 11 &  75 &  & \checkmark &  0.777 & $^{*}$   &   \\
\hline 7   &2 & 12 &  75 & \checkmark & \checkmark &   0.443 & $^{*}$ &   \\
\hline 8   &1 & 6 & 23  & \checkmark  & \checkmark &   0.484 &  0.108  & \\
\hline 9   & 2& 9 & 55  & \checkmark & \checkmark &   1.069 & 0.308    & \\
\hline
\end{tabular}

\smallskip
{\bf Table 1:} 2D Examples.
\end{center}

\vspace{0.4 cm}

$^{*}$: Computation was cancelled after fifteen minutes. 

Notice that when the algorithm in \cite {GVN} succeeds, it provides better timings than our algorithm. However, in most cases the implicit equation of the curve is too big, and the algorithm in \cite {GVN} gets stuck. 

\begin{figure}[ht]
\begin{center}
\centerline{$\begin{array}{ccc}
\includegraphics[scale=0.25]{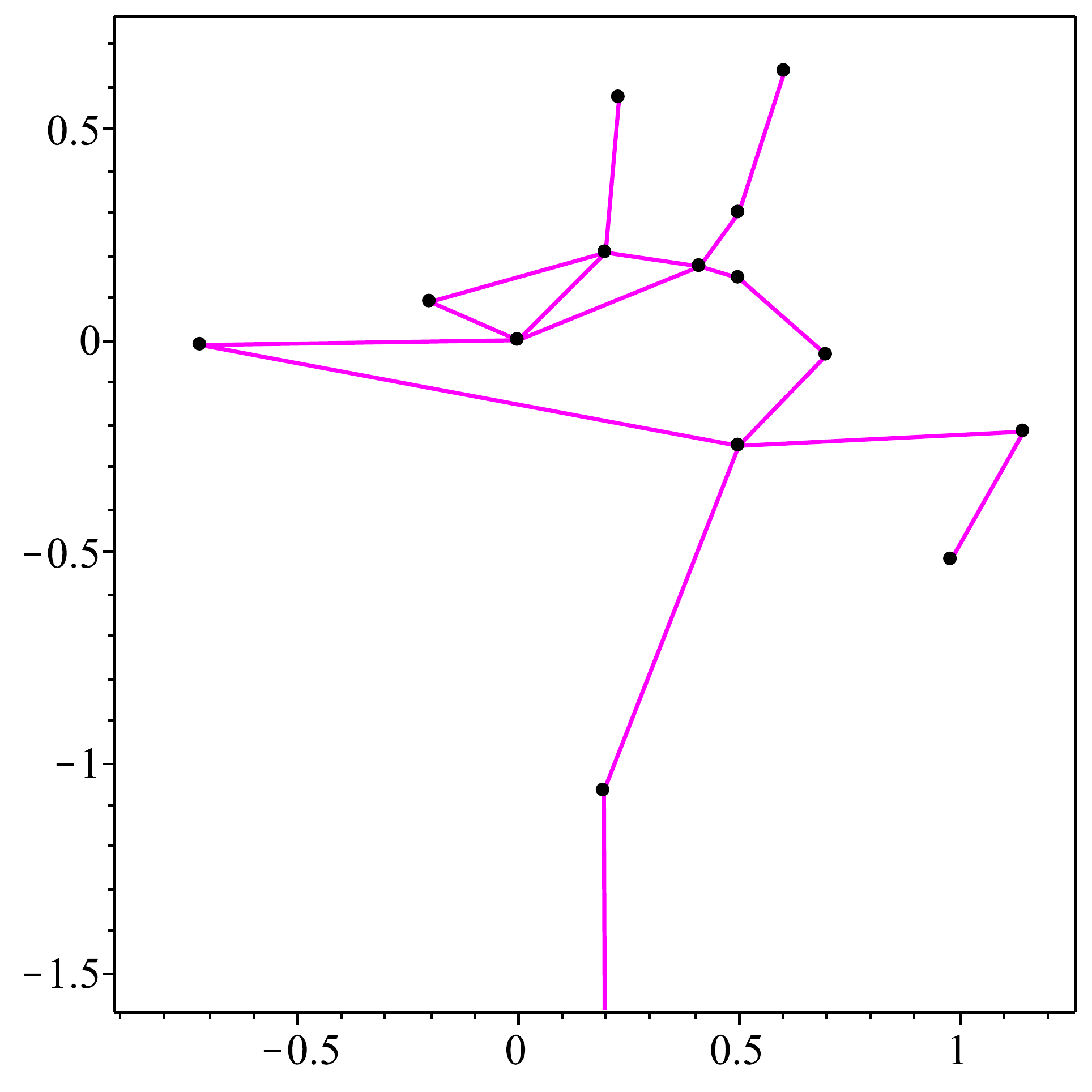} &
\includegraphics[scale=0.25]{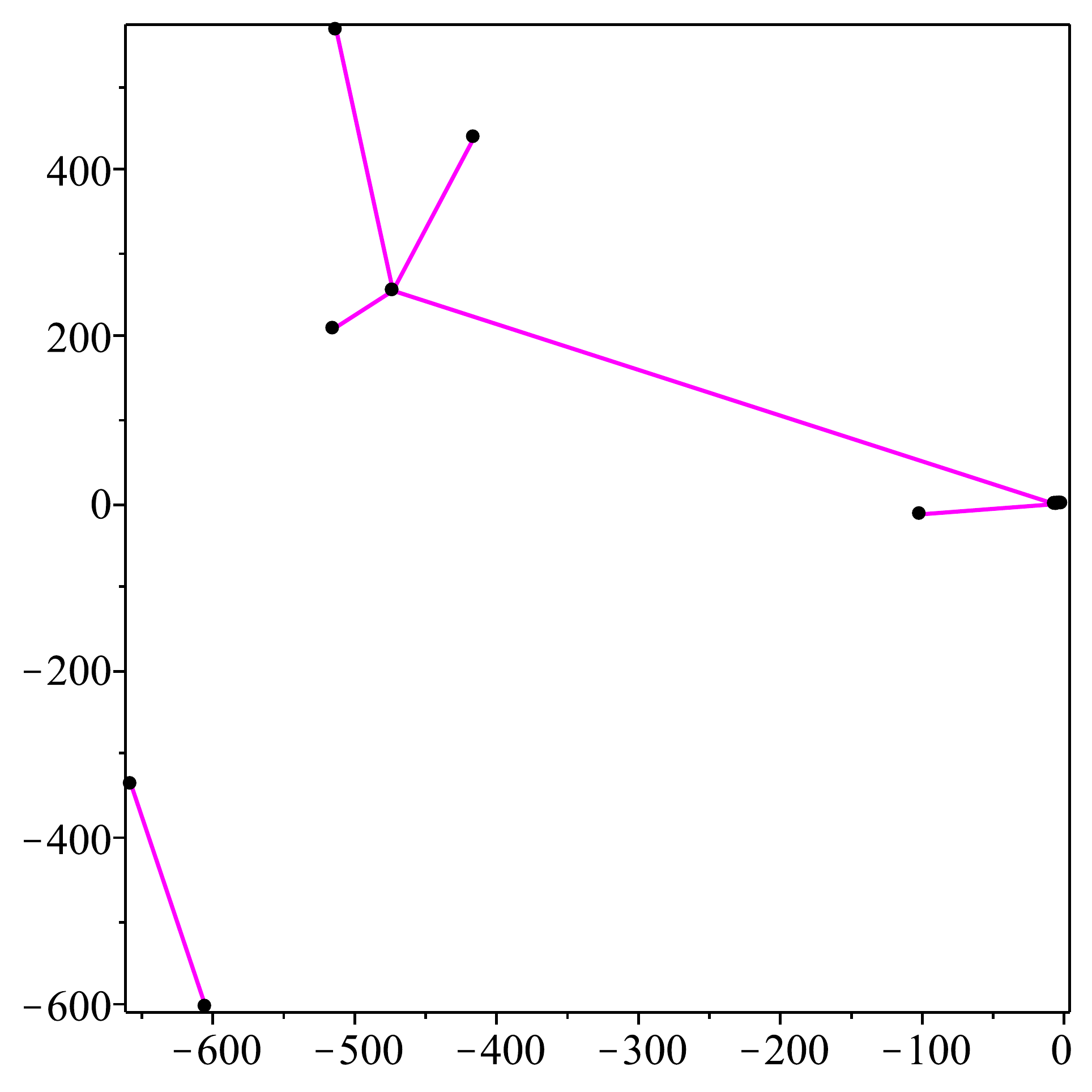} &
\includegraphics[scale=0.25]{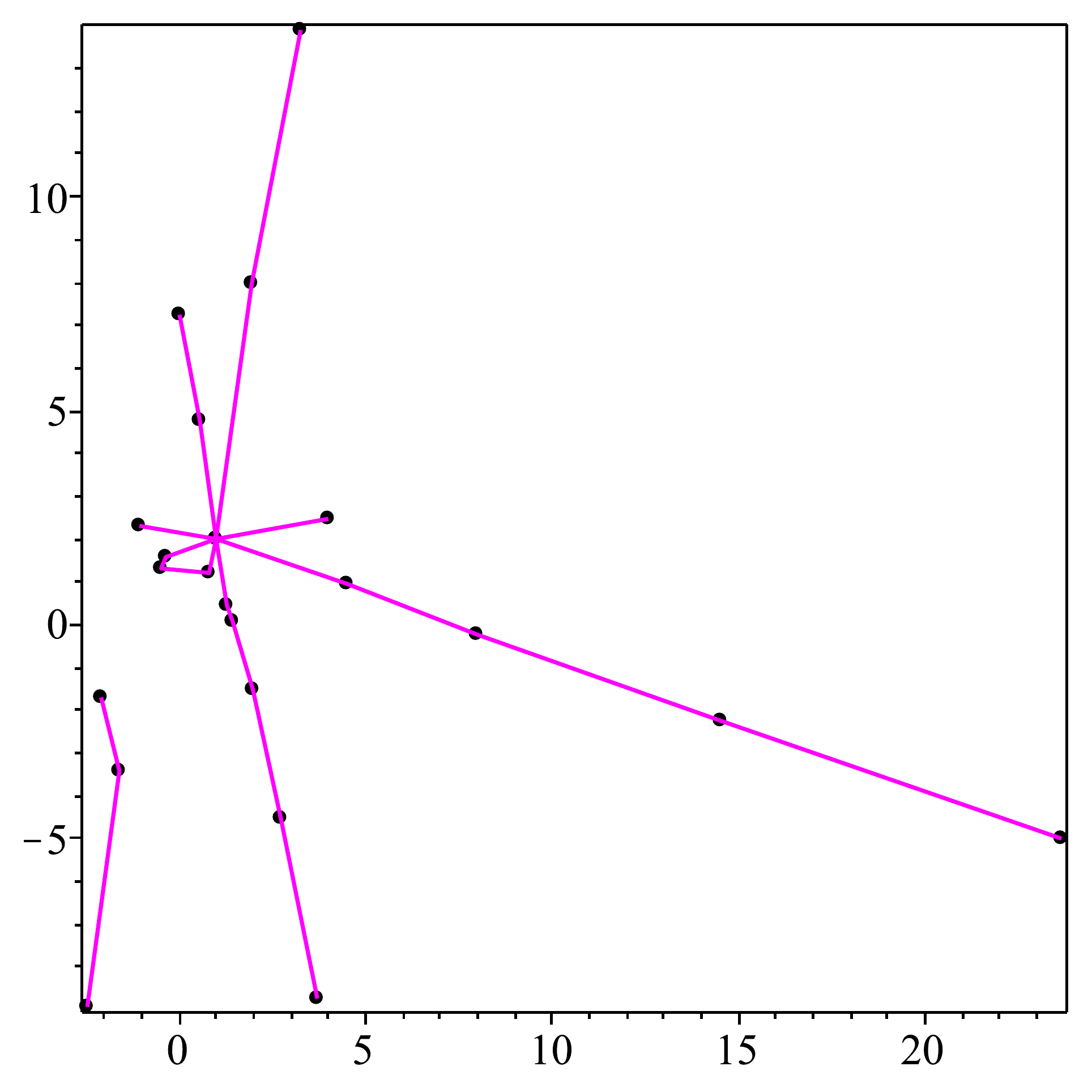} \\
\includegraphics[scale=0.25]{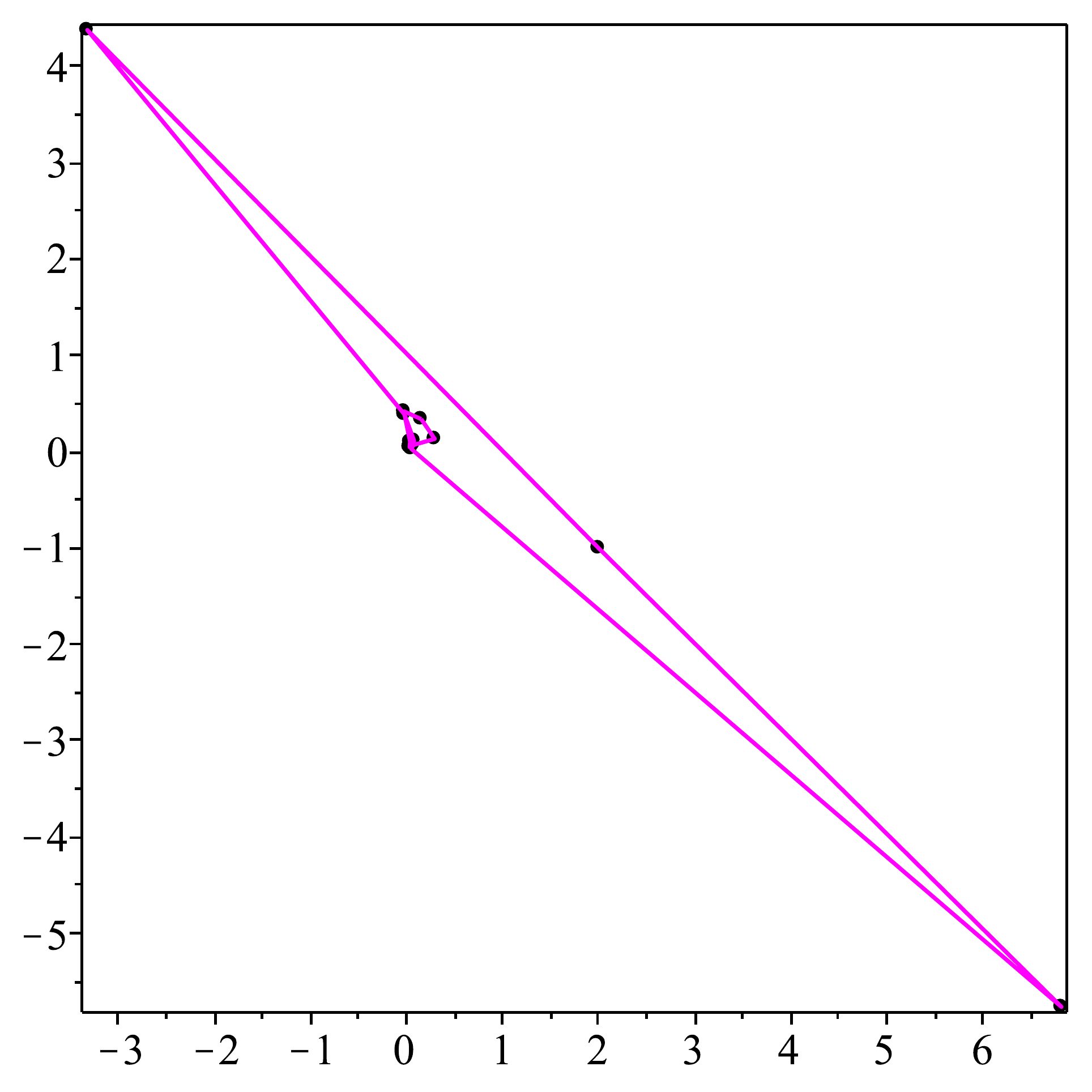} &
\includegraphics[scale=0.25]{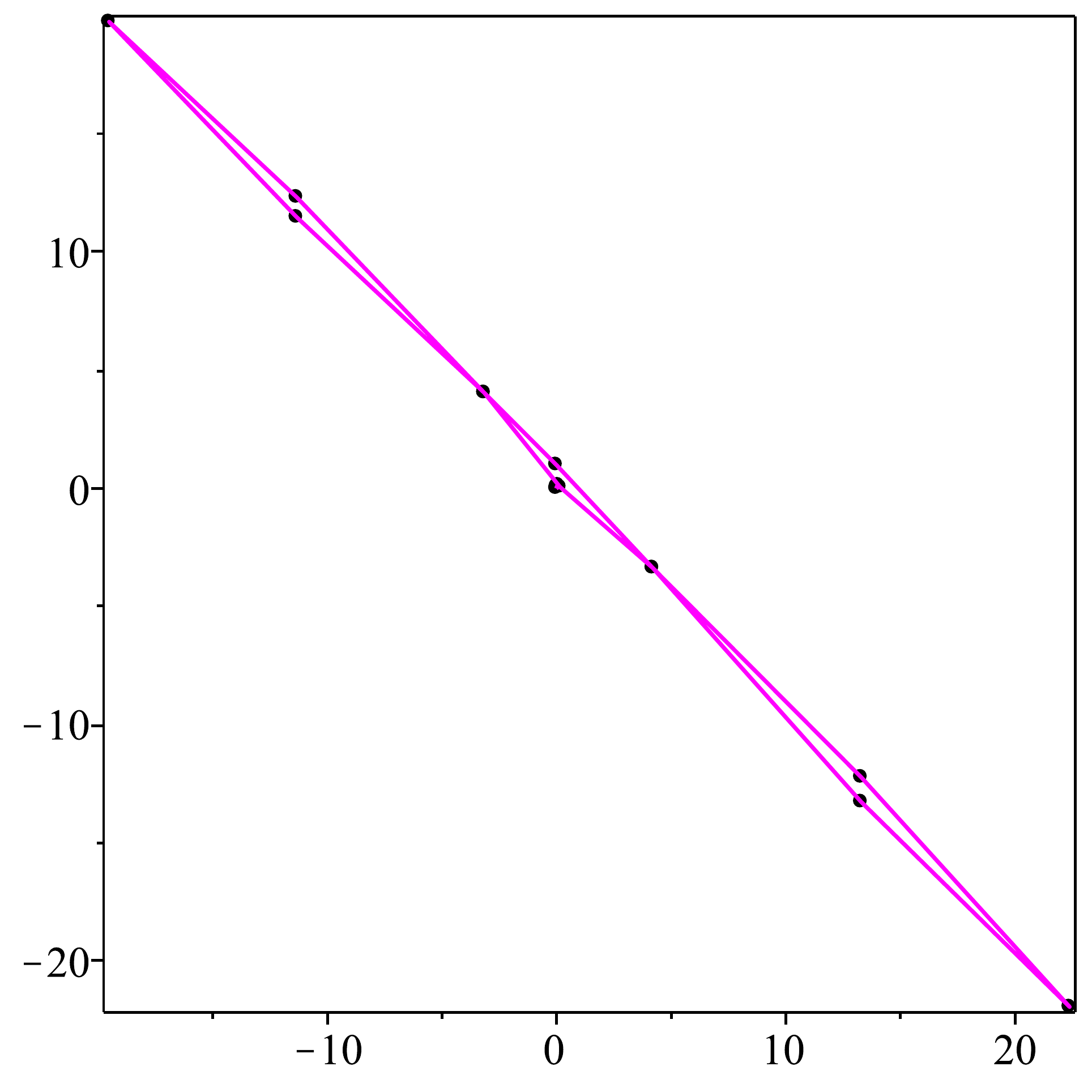} &
\includegraphics[scale=0.25]{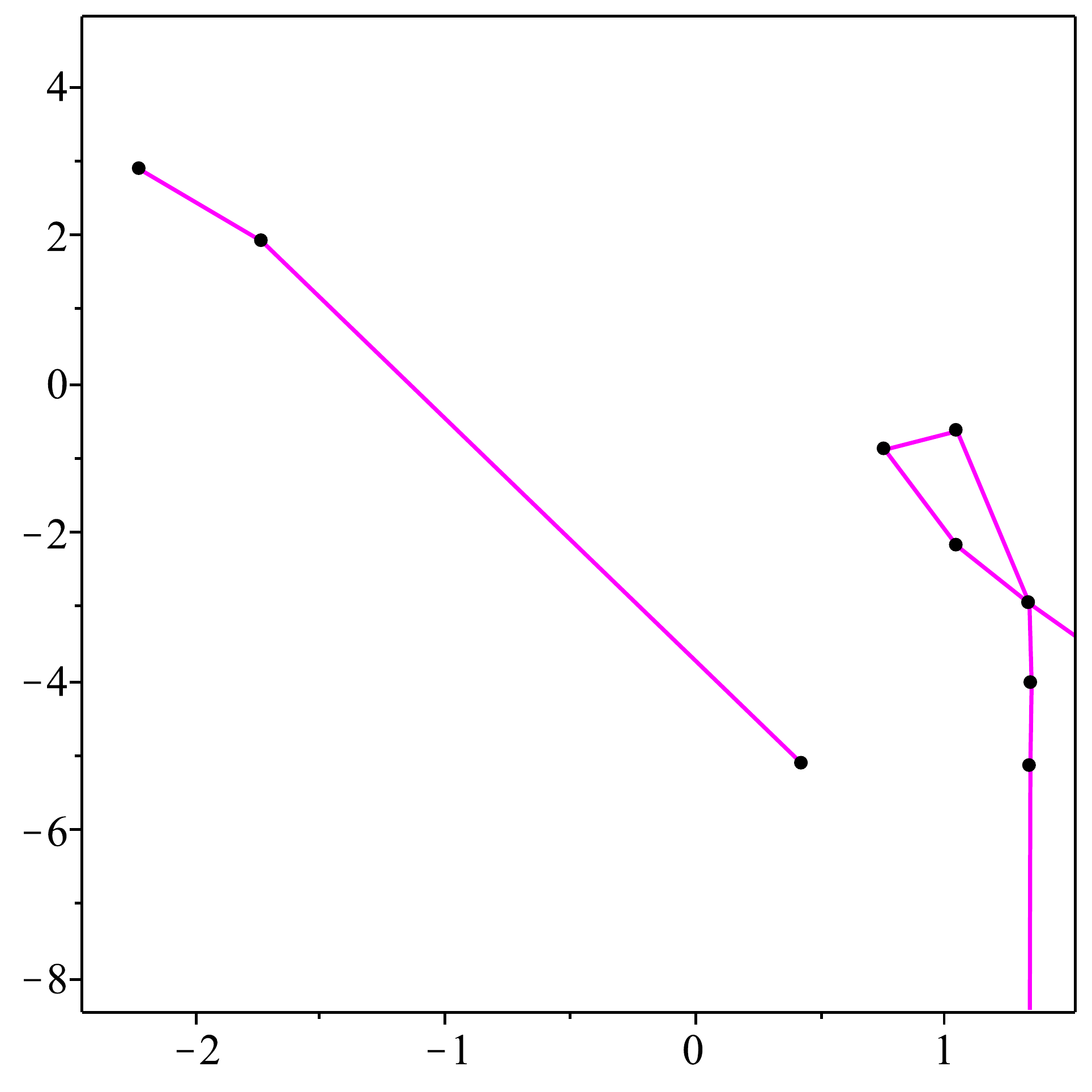}\\
\includegraphics[scale=0.25]{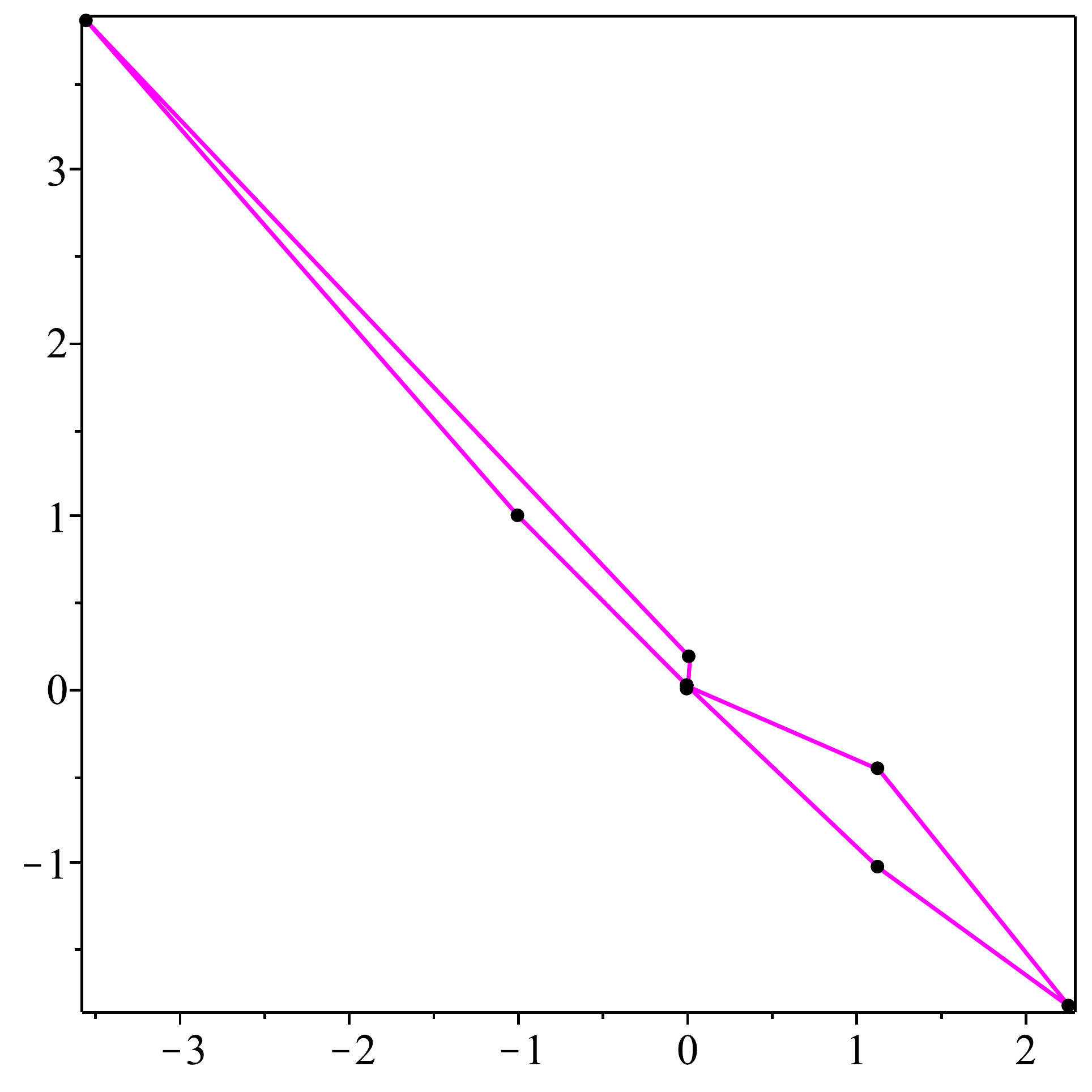} &
\includegraphics[scale=0.25]{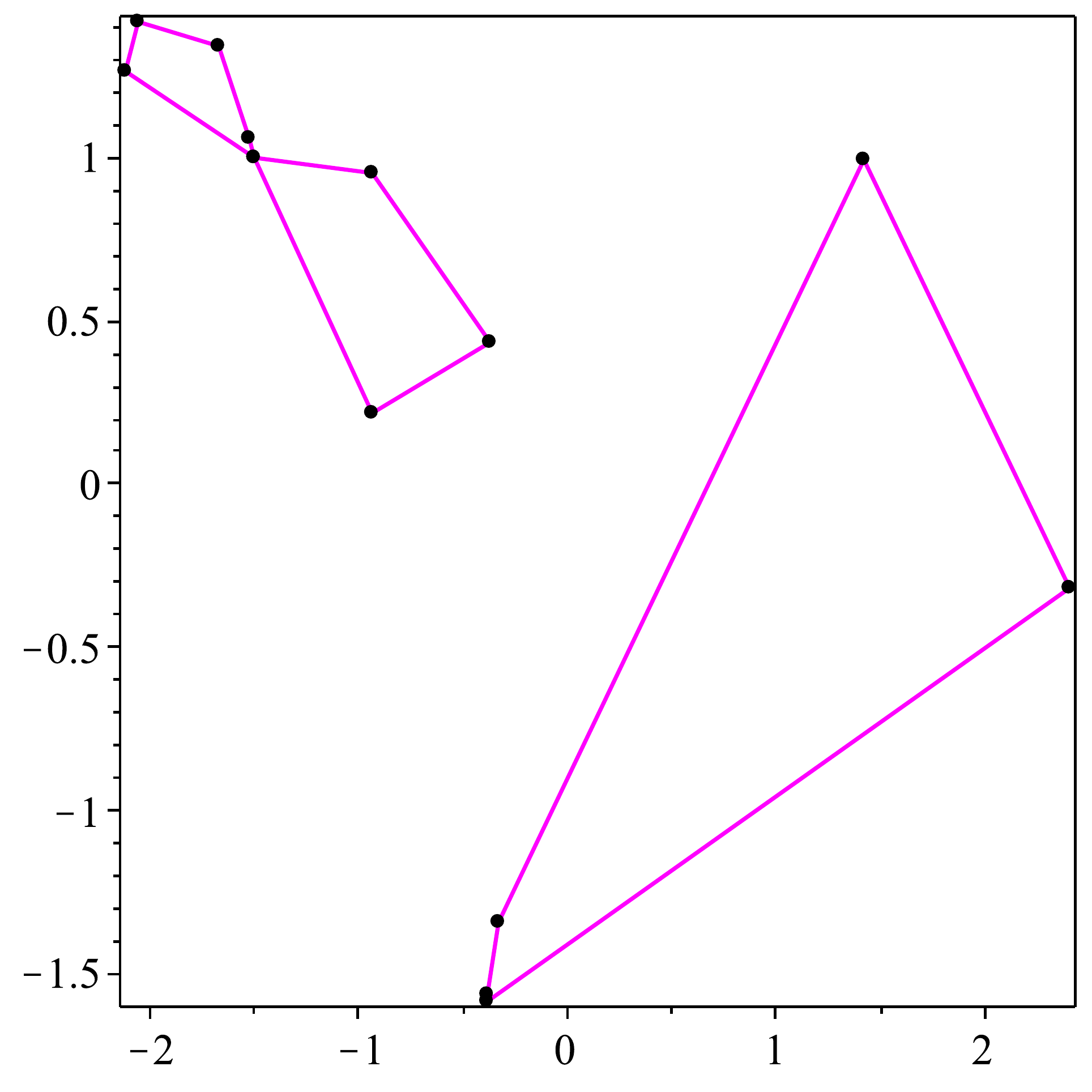} &
\includegraphics[scale=0.25]{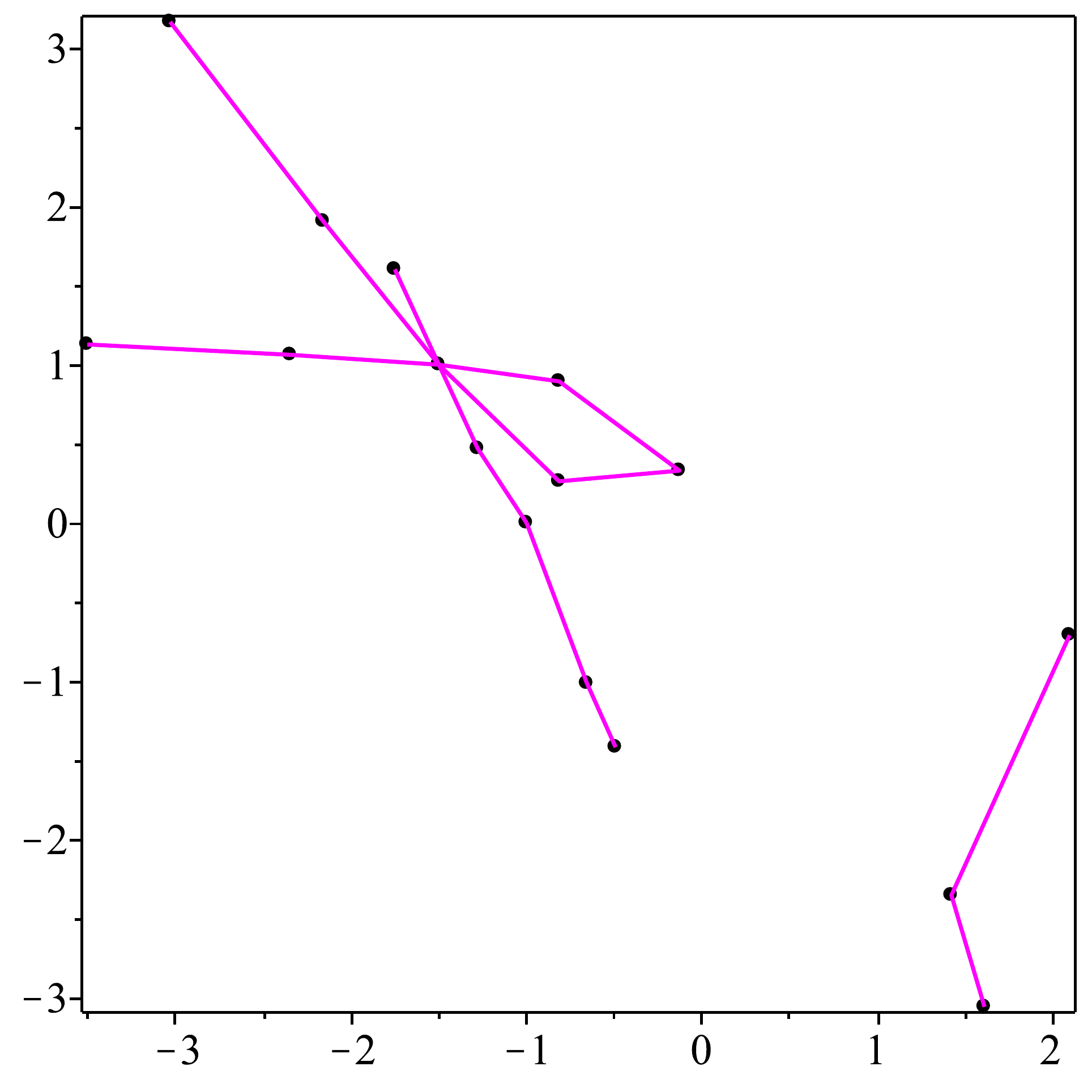} 
\end{array}$}
\end{center}
\caption{Examples of the 2D algorithm.}\label{2d}
\end{figure}

\medskip
Finally, we present examples of the 3D algorithm. In Table 2, for each curve we include the genus, the total degree 
($d_i$) and the number of terms of the implicit equation
of the projection onto the $xy$-plane (n.terms), and the timing in seconds taken by our algorithm ($t_0$); the parametrizations corresponding to each curve are given in Appendix III. Additionally, we include two columns on the nature of $P_{\pm \infty}$ and the existence of self-intersections, as in Table 1. In the last column we include some observations on how we generated the example, in some interesting cases. 

\vspace{0.4 cm}
\begin{center}
\hspace{-0.8 cm}\begin{tabular}{|c|c|l|c|c|c|c|c|c|} \hline
Example  &genus & $d_i$ & n.terms  &$P_{\pm \infty}$ aff. & S.I. &  $t_0$ & Obs.    \\ 
\hline \hline 1 &  4 &  10 &  66  &   &  & 1.543 & \\ 
\hline 2   &  2 &  6 & 16   &   \checkmark &  &  0.344  &  Int. con. and quadric \\
\hline 3   & 7 & 16 &  153  &    & \checkmark &   78.252 &  Int. ruled and quadric  \\
\hline 4   &3  & 8  &  42  &    &  &   0.537 & Int. ruled and quadric  \\
\hline 5   & 2 & 12   &  91  &     &  &  4.238  & Int.  bicubic patch and plane \\
\hline 6   & 1 & 4  & 9   &   &  &  0.201  & \\
\hline 7   & 1 &  10 & 34   &   &  &  0.352  &  \\
\hline 8   & 2  & 19   & 61   &  \checkmark & \checkmark &   1.031 &     \\
\hline 9   &  2 &  9  &  55  &  \checkmark & \checkmark &   0.949 &  \\
\hline
\end{tabular}

\vspace{0.4 cm}

\smallskip
{\bf Table 2:} 3D Examples.
\end{center}

The pictures corresponding to these curves are shown in Figure \ref{3d}. Notice that the timing in Ex. 3 is considerably higher, which is expectable because both the Weierstrass curve and the mapping ${\bf x}(t,s)$ are dense and with high degree.

\begin{figure}[ht]
\begin{center}
\centerline{$\begin{array}{ccc}
\includegraphics[scale=0.25]{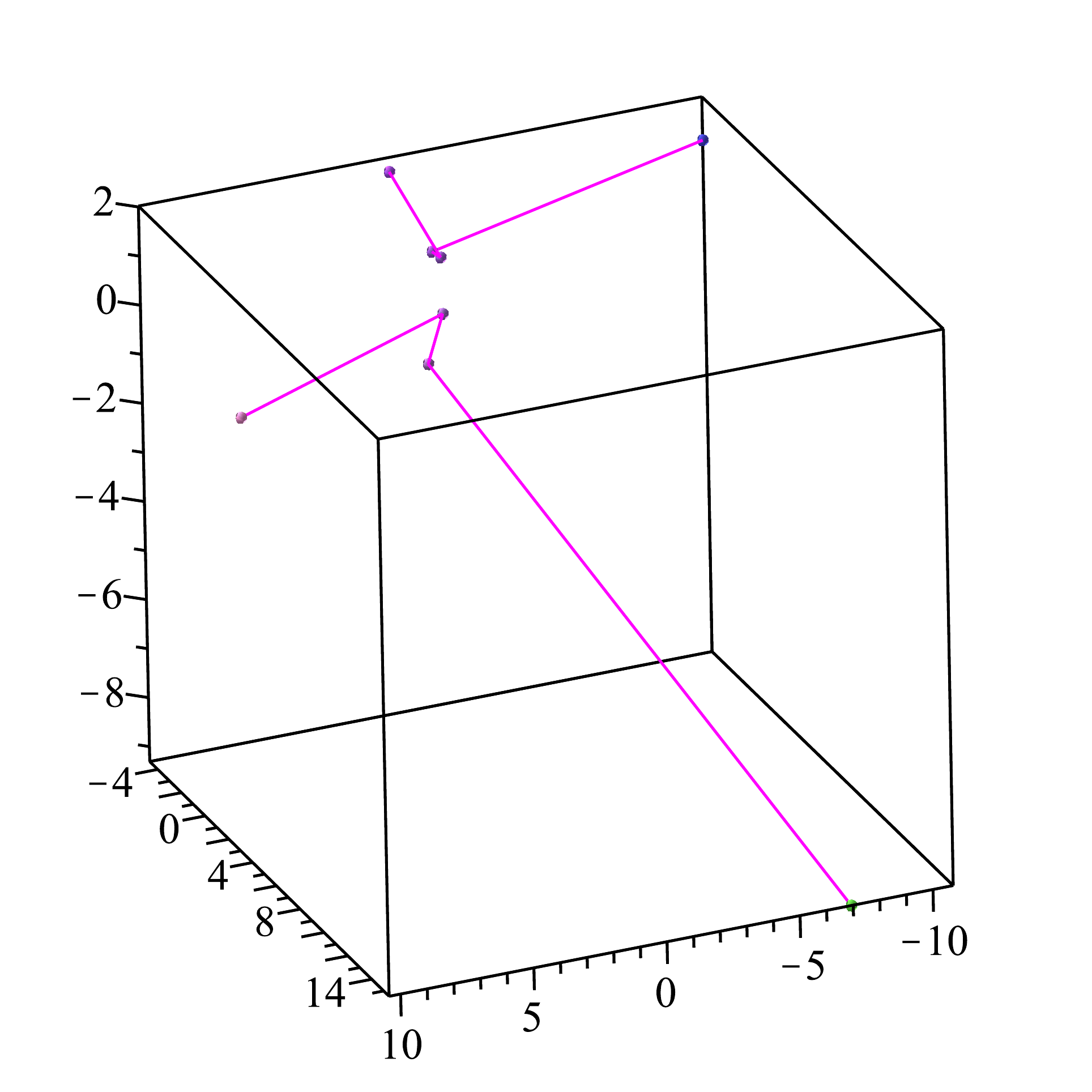} &
\includegraphics[scale=0.25]{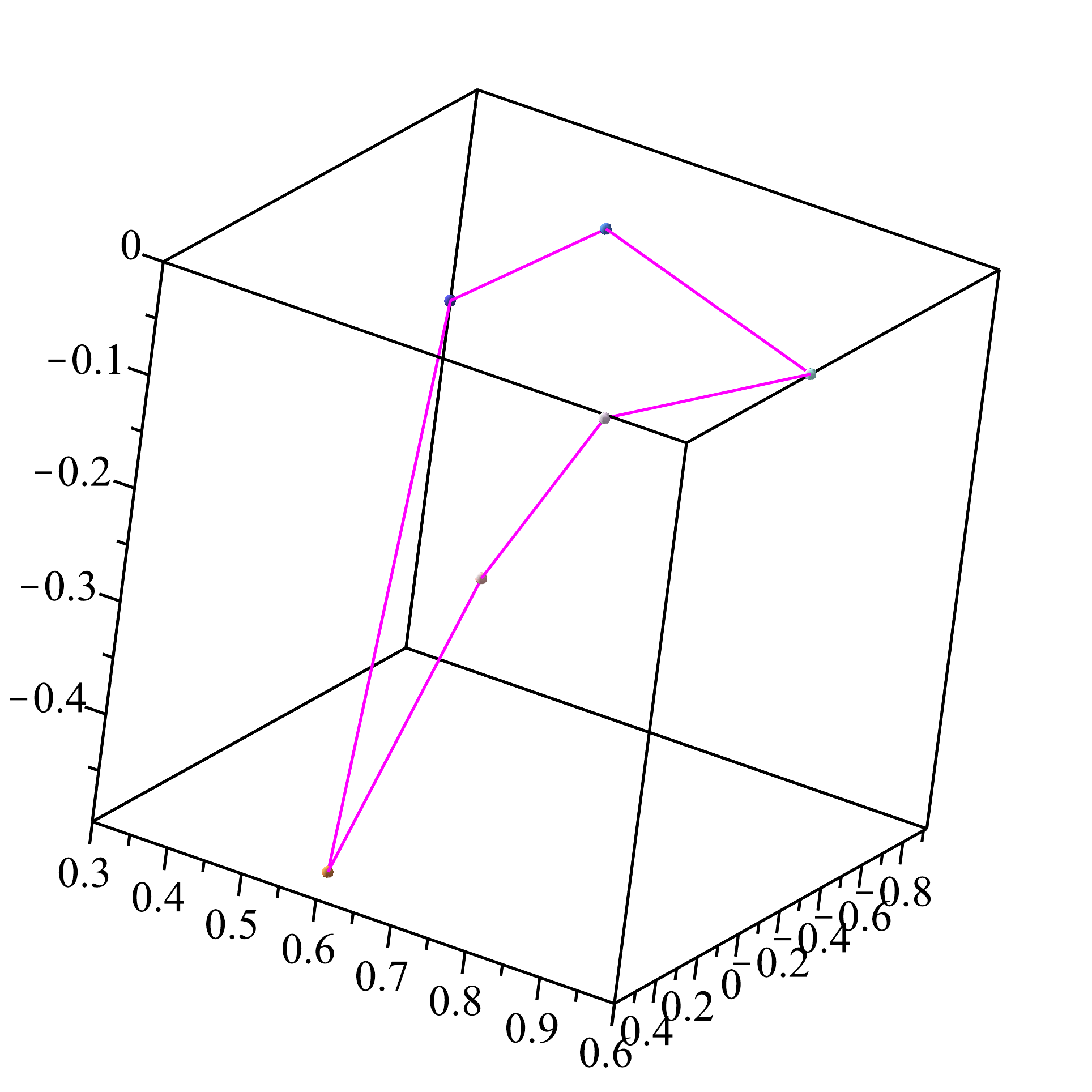} &
\includegraphics[scale=0.25]{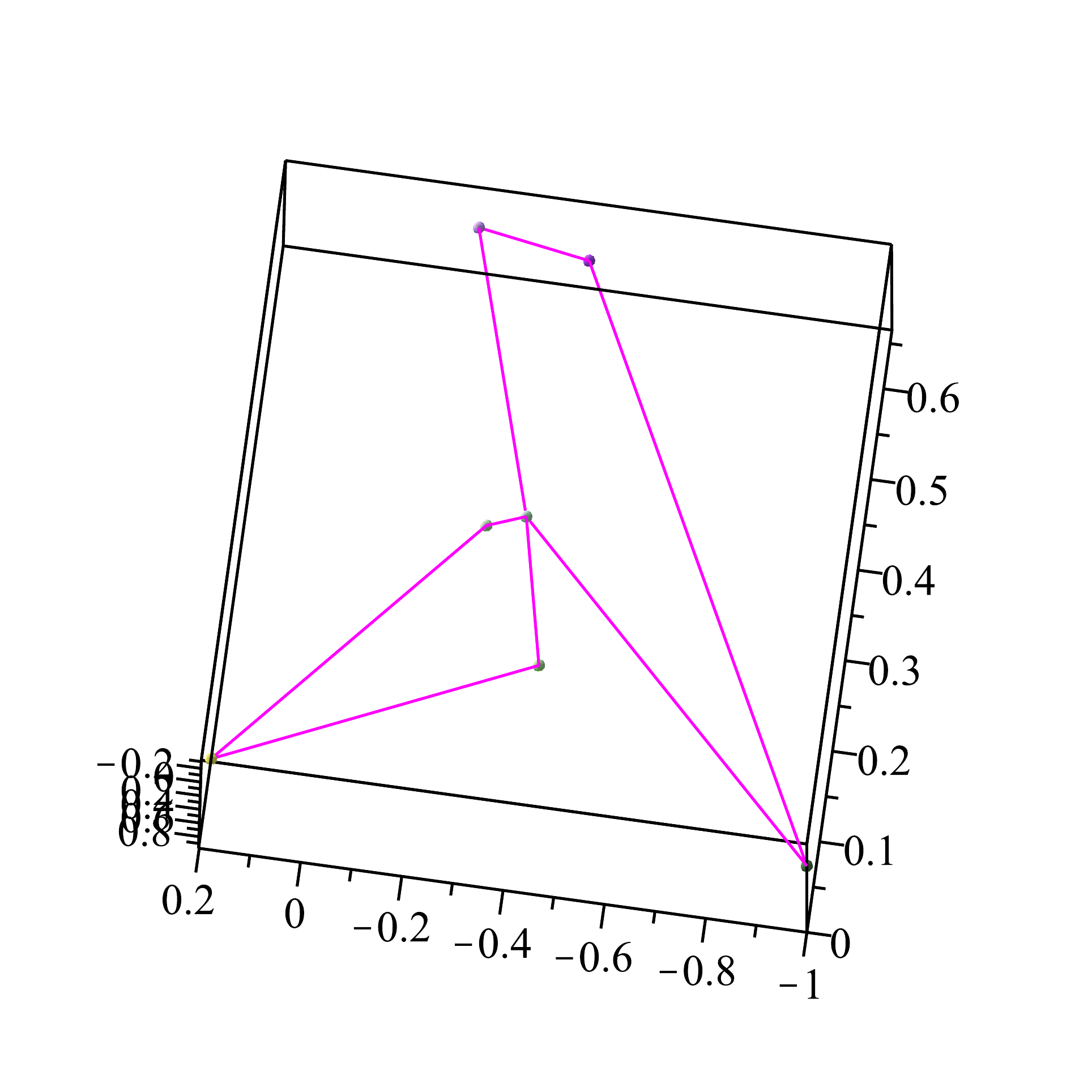} \\
\includegraphics[scale=0.25]{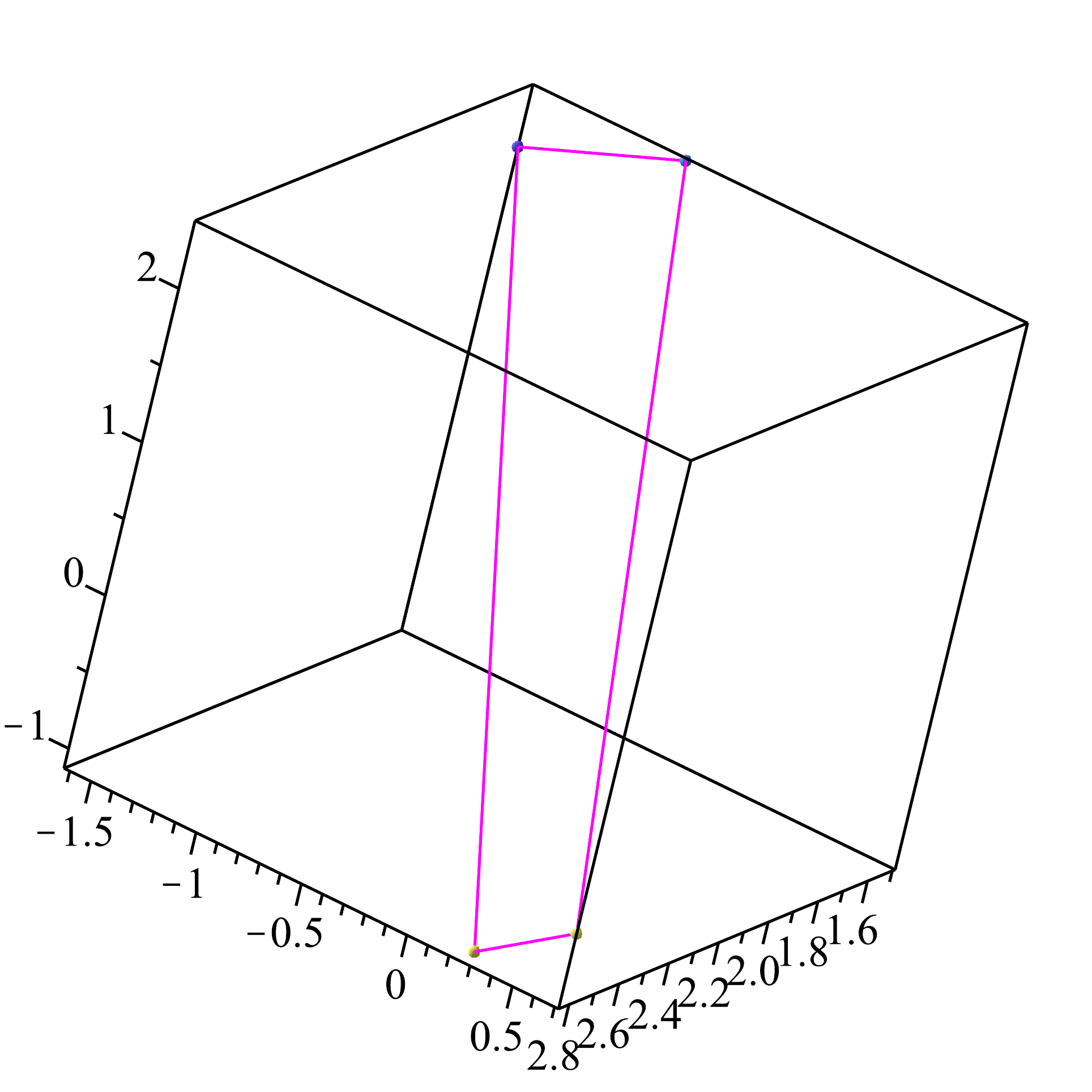} &
\includegraphics[scale=0.38]{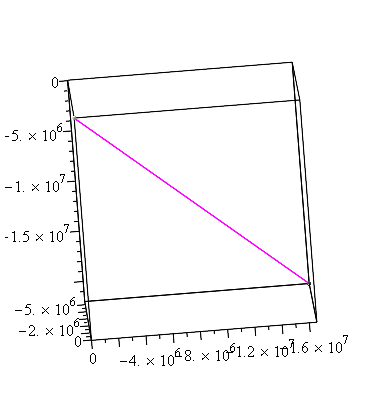} &
\includegraphics[scale=0.39]{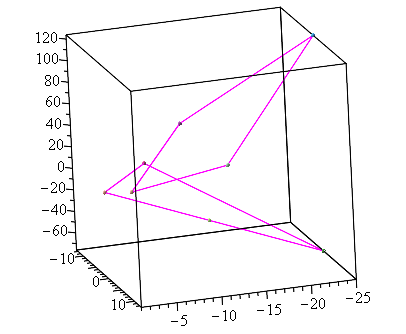}\\
\includegraphics[scale=0.39]{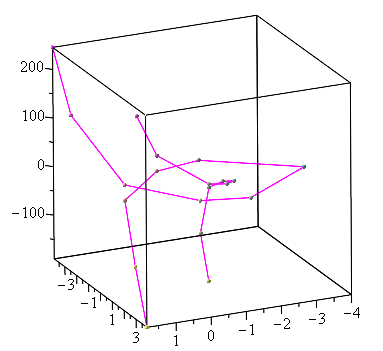} &
\includegraphics[scale=0.39]{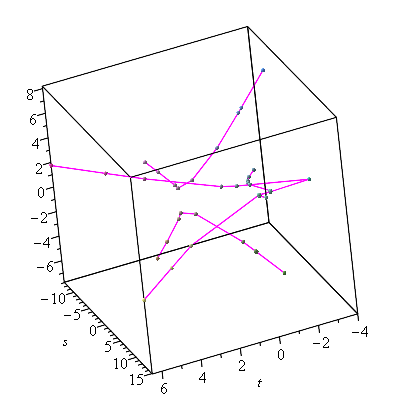} &
\includegraphics[scale=0.39]{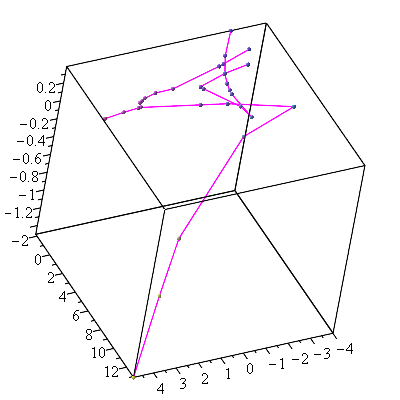} 
\end{array}$}
\end{center}
\caption{Examples of the 3D algorithm.}\label{3d}
\end{figure}

\section{Complexity and certification issues.} \label{sec-certification}

In this section we present the complexity of the algorithms presented
in the previous sections, and we elaborate on how to certificate the
topology of the curves. To certify the topology we must be sure
whether two different points $(t_i,s_i)\neq (t_j,s_j)$, both belonging
to ${\mathcal G}$, satisfy ${\bf x}(t_i,s_i)={\bf x}(t_j,s_j)$,
that is  whether they give rise to the same point $P\in {\mathcal C}$. We
first analyze the complexity of the algorithm without the
certification step: in particular, the timings corresponding to
Section \ref{sec-exp} do not include this certification. Then, we
address certification issues and provide the complexity of the
algorithm including the certification step. We analyze the algorithm
for 3D curves: the complexity bound is the same for 2D and 3D curves.

\subsection{Complexity (I)}

In this section we present the bit complexity analysis of the
algorithm without the certification step. This is the algorithm for which we perform experiments in Section~\ref{sec-exp}. We denote the maximum bitsize by
$\bsz(f)$ of the coefficients of a polynomial $f$. Additionally, we
denote by $\OO,\sO,\sOB$ the arithmetic complexity, the arithmetic
complexity neglecting logarithmic factors, and the bit complexity
(also neglecting logarithmic factors), respectively.

Let 
\[
  {\bf x}(t,s)= \left(
    \frac{a_{11}(t)+sa_{12}(t)}{b_{11}(t)+sb_{12}(t)},
    \frac{a_{21}(t)+sa_{22}(t)}{b_{21}(t)+sb_{22}(t)},
    \frac{a_{31}(t)+sa_{32}(t)}{b_{31}(t)+sb_{32}(t)}
  \right).
\]

We consider the following 3 polynomials: 
\[
\begin{array}{ccc}
X(t,s) &=& (b_{11}(t)+sb_{12}(t)) x - (a_{11}(t)+sa_{12}(t)),\\
Y(t,s) &=& (b_{21}(t)+sb_{22}(t)) y - (a_{21}(t)+sa_{22}(t)),\\
Z(t,s) &=& (b_{31}(t)+sb_{32}(t)) z - (a_{31}(t)+sa_{32}(t))).
\end{array}
\]
We also recall that $g(t, s) = s^2 - p(t)$. We assume that all the univariate polynomials in $t$, that is the
$a_{ij}(t),b_{ij}(t)$, and $p(t)$, have degree at most $d$, and that their
coefficients are integers of maximum bitsize at most $\tau$. 

The process of the algorithm goes as follows:

\noindent
\textit{(Step 1)} Compute the resultants
\[E_0 = \res_s(X, Y),\mbox{ }E_1 = \res_s(X, g).\]

The polynomial $E_0$ satisfies that $E_0 \in \ZZ[x, y, t]$. The degree of $E_0$ with
respect to $x$ and $y$ is 1 and with respect to $t$ is
$\leq 2d = \OO(d)$; moreover $\bsz(E_0) = \sO(\tau)$. The polynomial $E_1$ satisfies that $E_1 \in \ZZ[x, t]$. The degree of $E_1$ with respect
to $x$ is 2 and with respect to $t$ is $\leq 3d = \OO(d)$; also
$\bsz(E_1) = \sO(\tau)$.

Since the degree of $X, Y, Z$ and $g$ with respect to $x, y$, 
$s$ is at most 2, we can compute the resultants $E_0$ and $E_1$ by
performing a constant number of multiplications of univariate
polynomials in $t$. By recalling that the maximum degree with respect
to $t$ is $\sO(d)$, we deduce that the cost of computing $E_0$ and
$E_1$ is $\sOB(d \tau)$ \cite{MCA}.

\vspace{7pt}

\noindent
\textit{(Step 2)} Compute the subresultant sequence of $E_0$ and $E_1$ with
respect to $t$.

From the subresultant sequence we are interested in the polynomial of
degree 1 with respect to $t$.
This is the first subresultant polynomial; we can compute it in
$\sOB(d^4 \tau)$ \cite[Lemma~8]{DET}.  Let the coefficient of degree 1 of this
polynomial be ${\bf sres}_1 \in \ZZ[x, y]$ (i.e. the first principal subresultant). It has degree $\sO(d)$ and bitsize
$\sO(d \tau)$ \cite[Lemma~8]{DET}.

\vspace{7pt}

\noindent
\textit{(Step 3)}  Substitute the parametrization ${\bf x}(t,s)$ in ${\bf sres}_1$.

After clearing denominators we obtain a polynomial $M(t,s)\in \ZZ[t, s]$. The degree of $M(t,s)$ with respect to $t$ and $s$ is $\sO(d)$ and its
bitsize is $\sO(d^2\tau)$. This calculation of $M(t,s)$ involves $\OO(d)$ multiplications of
bivariate polynomials in $s$ and $t$. This cost is $\sOB(d^5 \tau)$
\cite{SRUR,MCA}.

\vspace{7pt}

\noindent
\textit{(Step 4)} Solve the polynomial system $M(t, s) = g(t, s) = 0$.

We can solve the system in $\sOB(d^7 \tau)$ (or $\sOB(d^8\tau)$)
\cite{ES-biv-12,BLMPRS-biv-16}.

After solving the system, we compute
the images under the birational mapping ${\bf x}(t,s)$ of all the
points $(t,s)$ computed along the way, and connect them properly.

The whole complexity is dominated by the complexity of solving the
polynomial system $(\Sigma) \{ M(t, s) = g(t, s) = 0 \}$, so we get a final bound of
$\sOB(d^7 \tau)$ (or $\sOB(d^8\tau)$), without including
certification.

\subsection{Certification and complexity (II)}\label{compl-2}

In this subsection we consider certification strategies, and we
present the complexity of the algorithm including this certification.
We perform the certification by exploiting the rational univariate
representation of the real roots of the polynomial system
$(\Sigma) \{ M(t, s) = g(t, s) = 0 \}$.

Within the complexity bound given in the previous subsection for
solving the bivariate system $(\Sigma)$, we can compute both an
isolating interval representation of the real roots, as a well a
(sparse) rational univariate representation (SRUR)
\cite{BLMPRS-biv-16}, see also \cite{SRUR}. The latter represents the
tuples $(t, s)$ of the solutions os $(\Sigma)$ as
$\left(\frac{F_1(\theta)}{F_0(\theta)},
  \frac{F_2(\theta)}{F_0(\theta)}\right)$, where $\theta$ runs over
all the (real) roots of a (univariate) polynomial $F(\theta)$ and
$F_0, F_1$, and $F_2$ are univariate polynomials.  This representation
involves univariate polynomials of degree $\sO(d^2)$ and bitsize
$\sO(d^3 \tau)$.

Now we want to identify which tuples of solutions of the polynomial
system $M(t, s) = g(t, s) = 0$ give rise to the same point on space
curve. Or in other words, we want to \textit{certify} when two tuples
give rise to the same point on the space curve.

Say that $(\alpha_1, \beta_1)$ and $(\alpha_2, \beta_2)$ are two
different solutions of the polynomial system $(\Sigma)$. Assume further that they
correspond to the roots $\theta_1$ and $\theta_2$ of the polynomial
$F(\theta)$.  Thus, their rational univariate representation is
\[
  \left(\frac{F_1(\theta_1)}{F_0(\theta_1)},
    \frac{F_2(\theta_1)}{F_0(\theta_1)}\right)
  \quad \text{ and } \quad
\left(\frac{F_1(\theta_2)}{F_0(\theta_2)},
  \frac{F_2(\theta_2)}{F_0(\theta_2)}\right) ,
\]
with
$F(\theta_1)=0$, $F(\theta_2)= 0$.

We check if they correspond to the same point by exploiting the
parametrization ${\bf x}$.  For example, to test if they result in the same
$x$-coordinate, we should test whether or not
$$\dfrac{a_{11}(\alpha_1)+\beta_1 a_{12}(\alpha_1)}{b_{11}(\alpha_1)+\beta_1 b_{12}(\alpha_1)} =
\dfrac{a_{11}(\alpha_2)+\beta_2
  a_{12}(\alpha_2)}{b_{11}(\alpha_2)+\beta_2 b_{12}(\alpha_2)}. $$
Clearing denominators, we get $\widehat{G}(\alpha_1,\alpha_2)=0$. Now
if we substitute the rational univariate representation of the roots
and clear denominators, then we get a new bivariate polynomial $G$,
and we need to test whether or not $G(\theta_1, \theta_2)=0$.

The degree of $G$ is $\sO(d^3)$, in $\theta_1$ and $\theta_2$
and its bitsize is $\sO(d^4 \tau)$. The complexity of computing $G$ involves the multiplication of $\sO(d)$ univariate polynomials and is $\sOB(d^8 \tau)$. The cost of this bivariate sign evaluation is $\sOB(d^{15}\tau)$.

We must perform this bivariate sign evaluation for every pair
$(\theta_i, \theta_j)$ of roots of $F$, 
and test for all coordinates $(x, y, z)$.
There are $\sO(d^4)$ pairs of solutions to test
and the total cost is $\sOB(d^{19}\tau)$. This complexity bound of certification dominates the overall complexity of the algorithm. 

We have implemented the certification part and the timings we get are
in agreement with this complexity: although there can be examples
where the computing time is reasonable, in general the timings are
very high and further research needs to be done. It seems plausible
to improve the complexity of certification by exploiting more
carefully aggregate separation bounds for the real roots of polynomial
systems \cite{emt-dmm-10}. For example, we can apply this aggregation
when we perform the time consuming sign evaluation of $G$ over all the
roots of the polynomial $F$. There should be a gain of a factor $d^2$
with this approach.

However, the most promising direction is to use more advanced
(probabilistic) tests for checking equality of real algebraic numbers
\cite{Blomer-sosr-91}.  The reader might notice that we do not really
need the actual sign evaluation of $G$ at two real algebraic numbers.
What we really need is to test whether or not the evaluation of
$G(\theta_1, \theta_2)$ is zero or not.

\subsection{Comparison of complexities with implicit algorithms.}

A possibility to compute the topology of ${\mathcal C}$ is to compute first an implicit representation of the curve, and then to apply an algorithm to complete the topology of an implicit curve. In the planar case, the implicit representation requires just one bivariate polynomial $f(x,y)$, that can be computed using Gr\"obner bases. Denoting the degree of $f(x,y)$ by $n$, and denoting by $\tau_f$ the bitsize of the coefficients of $f$, the complexity of computing the topology of $f(x,y)=0$ is $\sO_B(n^6+n^5\tau_f)$. In our case $n=\sO(d)$ and $\tau_f=\sO(d\tau)$, so we reach a complexity of $\sO_B(d^6\tau)$, certainly better than the bound we give in Subsection \ref{compl-2}. 

In the space case, however, the situation is much more difficult.
An implicit representation of ${\mathcal C}$ requires to
compute a basis for the ideal of the curve, which might have more than
two polynomials. Even if ${\mathcal C}$ is implicitly defined by only
two polynomials $f_i(x,y,z)$, with $i=1,2$, the known complexities for
implicit algorithms are worse than ours. In \cite{Diatta}, one has the
bound $\sO(n^{21}\tau_f)$, where $n,\tau_f$ are bounds for the degrees
and bitsizes of the $f_i$, respectively. For the same case, in
\cite{Lazard13} one has the bound $\sO(n^{37}\tau_f)$.

\section{Conclusion.}

We have presented algorithms to compute the topology of 2D and 3D hyperelliptic curves that do not require to compute or make use of the implicit representation of the curve. The 
main idea is to see the hyperelliptic curve as the image of a planar curve, the Weierstrass form of the curve, under a birational mapping of the plane or the space. Seeing the curve this way, the algorithms determines how the topology of the Weierstrass form changes when the birational mapping is applied. While a not completely certified algorithm produces good and fast results, a completely certified algorithm is much slower, although it is competitive in the space case, in terms of complexity, with algorithms using an implicit representation of the curve. Some lines of improvement to speed up the certification are suggested in the paper. We plan to exploit these ideas in the future to get a faster, certified, algorithm. 

\section*{References}


\newpage

\section{Appendix I: proofs.}

In this section we provide the proofs of some results in Section \ref{sec-planar}. We start with Proposition \ref{othersing}.

\medskip

\begin{proof} (of Proposition \ref{othersing}) Let ${\mathcal{V}}$ be the variety (the curve) in $\mathbb{R}^4(t,s,x,y)$ defined as 
\[
{\mathcal V}=V(\mbox{num}(x-x(t,s)),\mbox{num}(y-y(t,s)),g(t,s)),
\]
and let $\widehat{\mathcal V}=\Pi_{txy}({\mathcal V})$ be the projection of ${\mathcal V}$ onto $\mathbb{R}^3(t,x,y)$; notice that $\widehat{\mathcal V}\subset V(\xi_1,\xi_2)$. Suppose that $(t_0,s_0,x_0,y_0)$ is smooth in ${\mathcal V}$. Using the Jacobian matrix of $F_1(t,s,x)=\mbox{num}(x-x(t,s))$, $F_2(t,s,y)=\mbox{num}(y-y(t,s))$, $g(t,s)$ and condition \eqref{lacondi}, we observe that the tangent line to ${\mathcal V}$ at $(s_0,t_0,x_0,y_0)$ is parallel to $(-g_s(t_0,s_0),g_t(t_0,s_0),0,0)$. If $g_s(t_0,s_0)\neq 0$ (i.e. if $s_0\neq 0$) then the point $(t_0,x_0,y_0)$ is regular in $\widehat{\mathcal{V}}$ and the tangent line to $\widehat{\mathcal{V}}$ at $(t_0,x_0,y_0)$ is $\{x=x_0,y=y_0\}$, which is parallel to the $t$-axis. Therefore, $\xi_1(t,x_0)=0$ and $\xi_2(t,x_0,y_0)=0$ share the root $t_0$ with multiplicity higher than 1, and $\mathbf{sres}_1(x_0,y_0)=0$. If $g_s(t_0,s_0)=0$ (i.e. if $s_0=0$) then $(t_0,x_0,y_0)$ is singular in $\widehat{\mathcal V}$ and we can derive the same conclusion.

If, however, $(s_0,t_0,x_0,y_0)$ is a singular point of $\widehat{\mathcal{V}}$, then the tangent space to $\mathcal{V}$ at $(s_0,t_0,x_0,y_0)$, i.e. the kernel of the Jacobian matrix, consists of the vectors $(\alpha,\beta,0,0)$ with $\alpha,\beta\in {\Bbb C}$. Therefore, the line $\{x=x_0,y=y_0\}$ is tangent to $\widehat{\mathcal{V}}$ at $(t_0,x_0,y_0)$ and, therefore, all $\xi_i(t,x_0,y_0)$, $i=1,2$ have a multiple root at $t=t_0$. This implies that $\mathbf{sres}_1(x_0,y_0)=0$. 
\end{proof}

Now we prove Lemma \ref{t-factor}. From definitions of $\xi_1,\xi_2$ in Eq. \eqref{eq-xis} and taking into account that ${\bf x}$ can be written as in Eq. \eqref{ofx}, the polynomial $\xi_1(t,x)$ is the square-free part of the resultant with respect to $s$ of $g(t,s)=s^2-p(t) $ and
\begin{eqnarray*}
h(t,s,x)&:=&\mbox{num}(x-x(t,s))=\\&=&x (b_{11}(t)+sb_{12}(t))-(a_{11}(t)+sa_{12}(t))=\\&=&s(xb_{12}(t)-a_{12}(t))+xb_{11}(t)-a_{11}(t). 
\end{eqnarray*}

Since $\mathrm{degree}_s(g)=2$ and $\mathrm{degree}_s(h)\leq 1$, it is easy to compute such a resultant; if $\mathrm{degree}_s(h)=1$, i.e. if $x(t,s)$ explicitly depends on $s$, then
\begin{equation}\label{resxg}
\mbox{Res}_s(h,g)= \left(  b_{11}^2-p\,b_{12}^2  \right)x^2- 2\left(a_{11}\,b_{11}-p \,a_{12}\, b_{12}  \right) x +a_{11}^2- p\,a_{12}^2,
\end{equation}
where $b_{ij}=b_{ij}(t)$, $a_{ij}=a_{ij}(t)$ for $i=1,2$, $j=1,2$. If $\mathrm{degree}_s(h)=0$, i.e. if $x(t,s)$ does not depend on $s$, then 
\begin{equation}\label{resxg2}
\mbox{Res}_s(h,g)= h=x (b_{11}(t)+sb_{12}(t))-(a_{11}(t)+sa_{12}(t)).
\end{equation}

As for $\xi_2(t,x,y)$, that is is the square-free part of the resultant with respect to $s$ of $h(t,s,x)$ and 
\begin{eqnarray*}
j(t,s,y)&:=&\mbox{num}(y-y(t,s))=\\&=&y (b_{21}(t)+sb_{22}(t))-(a_{21}(t)+sa_{22}(t))= \\ &=&s(yb_{22}(t)-a_{22}(t))+yb_{21}(t)-a_{21}(t). 
\end{eqnarray*}
If $\mathrm{degree}_s(h)=\mathrm{degree}_s(j)=1$, i.e. if both $x(t,s)$ and $y(t,s)$ explicitly depend on $s$, then
\begin{equation}\label{resxy}
\mbox{Res}_s(h,j)=(a_{22}b_{11}-a_{21}b_{12})x+(a_{11}b_{22}-a_{12}b_{21})y+(b_{12}b_{21}-b_{11} b_{22} )xy-a_{11}a_{22}+a_{12}a_{21},
\end{equation}
where $b_{ij}=b_{ij}(t)$, $a_{ij}=a_{ij}(t)$ for $i=1,2$, $j=1,2$. If $\mathrm{degree}_s(h)=0$, i.e. if $x(t,s)$ does not depend on $s$, then 
\begin{equation}\label{resxy2}
\mbox{Res}_s(h,j)=h=x (b_{11}(t)+sb_{12}(t))-(a_{11}(t)+sa_{12}(t)),
\end{equation}
and if $\mathrm{degree}_s(j)=0$, i.e. if $y(t,s)$ does not depend on $s$, then 
\begin{equation}\label{resxy2}
\mbox{Res}_s(h,j)=j=yb_{21}(t)-a_{21}(t).
\end{equation}

\begin{proof} (of Lemma \ref{t-factor}) ``$\Leftarrow$" Suppose that $t_0$ corresponds to a base point. The resultant of $h(t,s,x)$ and $g(t,s)$ is equal to Equation (\ref{resxg}), and considered as a polynomial in $x$, it is easy to see that all its coefficients vanish at $t=t_0$. Thus, $t-t_0$ divides $\xi_1(x,t)$.
Likewise, the resultant of $h(t,s,x)$ and $j(t,s,y)$ is equal to  Equation (\ref{resxy}), and we can check that all its coefficients vanish in $t=t_0$. Thus, $t-t_0$ divides also $\xi_2(x,t)$.

\bigskip

``$\Rightarrow$" If $t-t_0$ divides $\xi_1$, then, by properties of resultants, since the leading coefficient of $g(t,s)$ with respect to $s$ is 1, there is $s_0$ with $g(t_0,s_0)=0$ and
\[
h(t_0,s_0,x)=x (b_{11}(t_0)+s_0b_{12}(t_0))-(a_{11}(t_0)+s_0a_{12}(t_0))=0; 
\] 
thus, $b_{11}(t_0)+s_0b_{12}(t_0)=a_{11}(t_0)+s_0a_{12}(t_0)=0$.

\medskip Next, if $t-t_0$ divides $\xi_2$, then either the leading coefficients of both $h(t,s,x)$ and $j(t,s,y)$ with respect to $s$ vanish at $t=t_0$, or there exists $s_1$ such that $h(t_0,s_1,x)=j(t_0,s_1,y)=0$ for all $x,y$. In the first case, we would have
$$b_{12}(t_0)=a_{12}(t_0)= b_{22}(t_0)=a_{22}(t_0)=0.$$
However, since also $b_{11}(t_0)+s_0b_{12}(t_0)=a_{11}(t_0)+s_0a_{12}(t_0)=0$, we should have 
$$
a_{11}(t_0)=a_{12}(t_0)=b_{12}(t_0)=b_{11}(t_0)=0,
$$
but this cannot happen because $\gcd(a_{11}, a_{12} , b_{11} , b_{12} )=1$. Therefore, there exists $s_1$ such that for all $x,y$
\[
h(t_0,s_1,x)=x (b_{11}(t_0)+s_1b_{12}(t_0))-(a_{11}(t_0)+s_1a_{12}(t_0))=0; 
\]
\[
j(t_0,s_1,y)= y (b_{21}(t_0)+s_1b_{22}(t_0))-(a_{21}(t_0)+s_1a_{22}(t_0)) =0.
\]
Then,
\[
\begin{array}{c}
b_{11}(t_0)+s_1b_{12}(t_0)=a_{11}(t_0)+s_1a_{12}(t_0)=0,\\
b_{21}(t_0)+s_1b_{22}(t_0)=a_{21}(t_0)+s_1a_{22}(t_0)=0.
\end{array}
\]

Since we also know that $b_{11}(t_0)+s_0b_{12}(t_0)=a_{11}(t_0)+s_0a_{12}(t_0)=0$, with $(t_0,s_0)\in {\mathcal G}$, we deduce that either $s_1=s_0$, or $b_{12}(t_0)=a_{12}(t_0)=0$. However, $b_{12}(t_0)=a_{12}(t_0)=0$ implies that $b_{11}(t_0)=a_{11}(t_0)=0$, which cannot happen because $\gcd(a_{11}, a_{12} , b_{11} , b_{12} )=1$. $s_0=s_1$ with $g(t_0,s_0)=0$. So, we can conclude that $t_0$ corresponds to a base point of ${\bf x}$.
\end{proof}
Finally, we prove Lemma \ref{tx-factor}. 

\begin{proof} (of Lemma \ref{tx-factor}) ``$\Leftarrow$" If $x(t,s)=x(t)$, then $\xi_1(t,x)=\xi_2(t,x,y)=b_{11}(t)x-a_{11}(t)$, and the result follows. 

``$\Rightarrow$" By way of contradiction, suppose that  $\xi_1(t,x)$ and $\xi_2(t,x,y)$ have a factor $\eta(t,x)$ depending on both $x,t$ and that both $x(t,s)$ and $y(t,s)$ depend on $s$. From Eq. \eqref{resxy2}, $y(t,s)$ explicitly depends on $s$. So suppose that $x(t,s)$ also depends on $s$. Then $\xi_2(t,x,y)$ is the square-free part of Eq. \eqref{resxy}, so $\eta(t,x)$ must be linear in $x$. Therefore either $\xi_2(t,x,y)$ coincides with $\eta(t,x)$, or $\xi_2(t,x,y)$ has another factor $\gamma(t,y)$ whose degree in $y$ is at most 1. Now we distinguish two cases:
\begin{itemize}
\item [(i)] If $\mathrm{degree}_y(\gamma)=1$, then for all $(t_0,y_0)$ such that $\gamma(t_0,y_0)=0$, either the leading coefficients of $h,j$ with respect to $s$ vanish at $(t_0,y_0)$ for all $x$, or there exists $s_0$ such that $h,j$ integrally vanish at $(t_0,s_0,y_0)$ for all $x$. The first possibility implies that both leading coefficients are zero modulo $\gamma(t,y)$, and this cannot happen because the leading coefficient of $h$ with respect to $s$ depends on $x$. But the second possibility cannot happen either, because that would imply that $x(t,s)$ has infinitely many base points. 
\item [(ii)] If $\mathrm{degree}_y(\gamma)=0$, then for all $(t_0,x_0)$ such that $\eta(t_0,x_0)=0$, either the leading coefficients of $h,j$ with respect to $s$ vanish at $(t_0,x_0)$ for all $y$, or there exists $s_0$ such that $h,j$ integrally vanish at $(t_0,s_0,y_0)$ for all $y$. Then we argue as before, this time with $j$ and $y(t,s)$. 
\end{itemize}
Thus we conclude that $x(t,s)$ cannot depend explicitly on $s$, and the result follows. 
\end{proof}

\newpage

\section{Appendix II: Parameterization of planar curves used in the experimentation}\label{appenII}

\underline{Example 1:} \medskip

$g(t)= s^2-(t^2+1);$
\medskip

${\bf x}(t,s)=\left(\dfrac{t^3+st+1}{t^4+2}, \dfrac{{t}^{3}-st+1}{ \left(t^4+2 \right)  \left( t-2 \right) }\right).$

\medskip \underline{Example 2:} \medskip
 
$g(t)= s^2-(t^2+9)(t-1);$
\medskip

${\bf x}(t,s)=\left({\dfrac {s{t}^{2}-{t}^{4}-6\,{t}^{2}-3\,s+3}{{t}^{2}+1}},{\dfrac {t
 \left( s{t}^{2}-3\,s+8 \right) }{4\,{t}^{3}-89\,{t}^{2}+77\,t+69}}
\right).$

\medskip \underline{Example 3:} \medskip
 
$g(t)= {s}^{2}-10\,{t}^{6}-24\,{t}^{5}+12\,{t}^{4}-92\,{t}^{3}+48\,{t}^{2}-6
\,t-10;$
\medskip

${\bf x}(t,s)=\left({\dfrac {1+{t}^{6}-15\,{t}^{4}+38\,{t}^{3}+ \left( 3-3\,s  \right) t}{
{t}^{6}-18\,{t}^{4}+2\,{t}^{3}-9\,{t}^{2}+1}},{\dfrac {2+2\,{t}^{6}+3\,
{t}^{5}+13\,{t}^{3}- \left( 3\,s+15 \right) {t}^{2}}{{t}^{6}-18\,{t}^
{4}+2\,{t}^{3}-9\,{t}^{2}+1}}
\right).$

\medskip \underline{Example 4:} \medskip
 
$g(t)=s^2+(t+1)(t-2)(t^2-25);$
\medskip

${\bf x}(t,s)=\left({\dfrac {{t}^{4}-{t}^{3}+{t}^{2}+5\,s-t}{{t}^{6}+1}},{\dfrac {{t}^{4}+{
t}^{3}-{t}^{2}-5\,s+t}{{t}^{6}+1}}\right).$

\medskip \underline{Example 5:} \medskip
 
$g(t)=s^2-(t-1)(t^2-9)(t^2+1);$
\medskip

${\bf x}(t,s)=\left({\dfrac {{t}^{4}-{t}^{3}+{t}^{2}+5\,s-t}{{t}^{6}+1}},{\dfrac {{t}^{4}+{
t}^{3}-{t}^{2}-5\,s+t}{{t}^{6}+1}}\right).$

\medskip \underline{Example 6:} \medskip
 
$g(t)=s^2-(t-1)(t^2+9)(t^2+1);$
\medskip

${\bf x}(t,s)=\left({\dfrac {s{t}^{2}-{t}^{4}-6\,{t}^{2}-3\,s+3}{ \left( {t}^{2}+1
 \right) s}},-{\dfrac {t \left( s{t}^{2}-3\,s-8 \right) }{ \left( {t}^{
2}+1 \right) s}}
\right).$

 \medskip\underline{Example 7:} \medskip
 
$g(t)=s^2-(t-1)(t^2+9)(t^2+1);$
\medskip

${\bf x}(t,s)=\left({\dfrac {{t}^{4}-{t}^{3}-{t}^{2}+5\,s-t}{{t}^{6}+1}},{\dfrac {{t}^{5}+{
t}^{3}-{t}^{2}-5\,s+t}{{t}^{6}+1}}\right).$

\medskip \underline{Example 8:} \medskip
 
$g(t)= s^2-5t^4+16t^3-34t^2+16t-29s;$
\medskip

${\bf x}(t,s)=\left({\dfrac {2\,s{t}^{2}-3\,{t}^{4}-18\,{t}^{2}-2\,s-15}{ -2\left( {t
}^{2}+1 \right) s}},{\dfrac {6\,{t}^{3}-3\,{t}^{4}+2\,st-6\,{t}^{2}+6
\,t-3}{ \left( {t}^{2}+1 \right) s}}
\right).$

\medskip \underline{Example 9:} \medskip
 
$g(t)={s}^{2}+17\,{t}^{5}+75\,{t}^{4}+10\,{t}^{3}+7\,{t}^{2}+40\,t-42;$
\medskip

${\bf x}(t,s)=\left(\,{\dfrac {2\,s{t}^{2}-3\,{t}^{4}-18\,{t}^{2}-2\,s-15}{ -2\left( {t
}^{2}+1 \right) s}},{\dfrac {6\,{t}^{3}-3\,{t}^{4}+2\,st-6\,{t}^{2}+6
\,t-3}{ \left( {t}^{2}+1 \right) s}}\right).$

\section{Appendix III: Parameterizations of the space curves used in the experimentation}\label{appenIII}

\medskip \underline{Example 1:} \medskip
 \begin{eqnarray*}
g(t,s)&=&
{s}^{2}-14\,{t}^{10}+32\,{t}^{9}-44\,{t}^{8}+36\,{t}^{7}-13\,{t}^{6}-
28\,{t}^{5}+48\,{t}^{4}\\&&-36\,{t}^{3}-8\,{t}^{2}+24\,t-12;
\end{eqnarray*}

${\bf x}(t,s)=\left( \dfrac {s+8t^5-10t^4+9t^3-4t+6}{6},\dfrac {s-t^5+2t^4-3t^3+2t-3}{3}, \dfrac {4t^5-2t^4-3t^3-s+4t}{6}
\right)$

\medskip \underline{Example 2:} \medskip
 
$g(t,s)=   s^2-2t^5+8;$
\medskip

${\bf x}(t,s)=\left({\dfrac {s{t}^{3}-{t}^{5}+{t}^{4}+8}{{t}^{6}+{t}^{4}+8}},{\dfrac {{t}^
{6}-{t}^{5}+s{t}^{2}+8}{{t}^{6}+{t}^{4}+8}},{\dfrac {2s-2\,{t}^{3}-2\,{t}
^{2}}{{t}^{6}+{t}^{4}+8}}
 \right).$

\medskip \underline{Example 3:} 
 \begin{eqnarray*}
g(t,s)&=& {s}^{2}+3\,{t}^{16}+6\,{t}^{14}+4\,{t}^{13}+3\,{t}^{12}+4\,{t}^{11}+{t
}^{10}-2\,{t}^{9} -6\,{t}^{8}\\ &&4\,{t}^{7}-2\,{t}^{6}+6\,{t}^{5}+4\,{t}^{
4}+2\,{t}^{3}-1;
\end{eqnarray*}
 \begin{eqnarray*}{\bf x}(t,s)&=&\left( {\dfrac {{t}^{7}-{t}^{9}-{t}^{8}+{t}^{6}+2\,{t}^{5}+ \left( s+2
 \right) {t}^{3}-3\,{t}^{2}+2\,t-s-1}{3\,{t}^{8}+3\,{t}^{6}-2\,{t}^{5}
+{t}^{4}-4\,{t}^{3}+3\,{t}^{2}-2\,t+2}}\right.,\\&&{\dfrac {3\,{t}^{11}+3\,{t}^{9}
+{t}^{8}-{t}^{6}+{t}^{4}-{t}^{3}+2\,{t}^{2}+ \left( s-1 \right) t-s+1
}{3\,{t}^{8}+3\,{t}^{6}-2\,{t}^{5}+{t}^{4}-4\,{t}^{3}+3\,{t}^{2}-2\,t+
2}},\\&&\left.{\dfrac{{t}^{2} \left( -{t}^{7}+2\,{t}^{6}-2\,{t}^{5}-{t}^{3}+{t}^
{2}+2+ \left( s-1 \right) t \right) }{3\,{t}^{8}+3\,{t}^{6}-2\,{t}^{5}
+{t}^{4}-4\,{t}^{3}+3\,{t}^{2}-2\,t+2}}
\right).\end{eqnarray*}

\medskip \underline{Example 4:} \medskip
 
$g(t,s)= {s}^{2}+{t}^{8}-6\,{t}^{7}+5\,{t}^{6}-16\,{t}^{5}-6\,{t}^{4}+2\,{t}^{3
}+103\,{t}^{2}-24\,t+6;
$
%
 \begin{eqnarray*}\hspace*{-2cm}{\bf x}(t,s)&=&\left({\dfrac {{t}^{5}+3\,{t}^{4}+4\,{t}^{3}+23\,{t}^{2}+s-t+2}{t \left( {t}
^{4}+6\,{t}^{2}+2 \right) }}\right.,
\\&&{\dfrac {-2\,{t}^{5}+6\,{t}^{4}-5\,{t}^{3}
+ \left( s+7 \right) {t}^{2}-3\,t+s-2}{t \left( {t}^{4}+6\,{t}^{2}+2
 \right) }},
 \\&&\left.{\dfrac {{t}^{6}-3\,{t}^{5}+3\,{t}^{4}-8\,{t}^{3}+2\,{t}^{2
}+ \left( 2\,s-14 \right) t+2}{t \left( {t}^{4}+6\,{t}^{2}+2 \right) }
} \right).\end{eqnarray*}

\underline{Example 5:} \medskip
 
$g(t,s)= {s}^{2}+3\,{t}^{6}+522\,{t}^{5}-1053\,{t}^{4}+1648\,{t}^{3}-588\,{t}^{
2}+192\,t+76;$

{\scriptsize  \begin{eqnarray*}\hspace*{-2cm}{\bf x}(t,s)&=&\left(\dfrac {54{t}^{9}+540{t}^{8}+378{t}^{7}-4110{t}^{6}+7713
{t}^{5}-4986{t}^{4}+ \left( 18s+1612 \right) {t}^{3}+ \left( 45
s+489 \right) {t}^{2}- \left( 27s+396 \right) t+15s+148}{
2 \left( 3{t}^{3}+18{t}^{2}-9t+7 \right) ^{2}},\right.\\
 &&
\dfrac {18
{t}^{9}+216{t}^{8}+594{t}^{7}+201{t}^{6}+1260{t}^{5}-243{t}^
{4}+ \left( 27s-538 \right) {t}^{3}+ \left( -27s+288 \right) {t}^{
2}+6t-18s-60}{2 \left( 3{t}^{3}+18{t}^{2}-9t+7 \right) ^{2}}
,\\
 &&
\left.{\dfrac {72{t}^{9}+756{t}^{8}+972{t}^{7}-3891{t}^{6}+9189
{t}^{5}-4689{t}^{4}+ \left( 45s+510 \right) {t}^{3}+ \left( 18
s+1443 \right) {t}^{2}- \left( 27s+642 \right) t-3s+186}{ 2\left( 
3{t}^{3}+18{t}^{2}-9t+7 \right) ^{2}}}\right).
\end{eqnarray*}}

\medskip \underline{Example 6:} \medskip
 
$g(t,s)= s^2+(t+1)(t-2)(t^2-25);$
\medskip

${\bf x}(t,s)=\left( -t^2 , -s ,t^3+t^2-5t-1 \right).$

\medskip \underline{Example 7:} \medskip
 
$g(t,s)= s^2-(t^2+9)(t^2+1);$
\medskip

${\bf x}(t,s)=\left( {\dfrac {s{t}^{2}-{t}^{4}-6\,{t}^{2}-3\,s+3}{ \left( {t}^{2}+1
 \right) s}},-{\dfrac {t \left( s{t}^{2}-3\,s+8 \right) }{ \left( {t}^{
2}+1 \right) s}},2\,{t}^{3}-s
\right).$

\medskip \underline{Example 8:} \medskip
 
$g(t,s)={s}^{2} -{t}^{6}+9\,{t}^{4}+{t}^{2}-9;$

\medskip

${\bf x}(t,s)=\left({\dfrac {-t^4+2\,s{t}^{2}-6\,{t}^{2}-3\,s+3}{ \left( t^2+1
 \right) s}},-{\dfrac {st^2-3\,s+8\,t}{ \left( t^2+1 \right) s}
},{\dfrac {{t}^{5}}{ \left( t^2+1 \right) s}}\right).$

\underline{Example 9:}\medskip

 $g(t,s)={s}^{2}-{t}^{5}+{t}^{4}+8\,{t}^{3}-8\,{t}^{2}+9\,t-9$

\medskip

${\bf x}(t,s)=\left({\dfrac {-{t}^{4}+s{t}^{2}-6\,{t}^{2}-3\,s+3}{ \left( {t}^{2}+1
 \right) s}},{\dfrac {3\,s-s{t}^{2}-8\,t}{ \left( {t}^{2}+1 \right) s}
},{\dfrac {{t}^{2}}{ \left( {t}^{2}+1 \right) s}}\right).$

\end{document}